\documentclass{amsart}

\usepackage{amsthm,amsfonts,amsmath,amssymb}
\usepackage[all]{xy}
\usepackage{lscape}
\usepackage{lunadiagrams}
\usepackage{hyperref}
\usepackage{pdfpages}

\theoremstyle{plain}

\newtheorem{theorem}{Theorem}[section]
\newtheorem{lemma}[theorem]{Lemma}     

\newtheorem{proposition}[theorem]{Proposition}

\theoremstyle{definition}

\theoremstyle{remark}

\newtheorem{remark}[theorem]{Remark}

\numberwithin{equation}{section}

\setlength{\parindent}{0pt}
\setlength{\parskip}{1ex}

\DeclareMathOperator{\Pic}{Pic}

\DeclareMathOperator{\Lie}{Lie}

\DeclareMathOperator{\height}{ht}
\DeclareMathOperator{\supp}{supp}

 \newcommand{\calL}{\mathcal L}
 
\newcommand{\calO}{\mathcal O} 
\newcommand{\calX}{\mathcal X}

\newcommand{\mC}{\mathbb C} \newcommand{\mN}{\mathbb N}
\newcommand{\mP}{\mathbb P} 
 \newcommand{\mZ}{\mathbb Z}

\newcommand{\gog}{\mathfrak g}

\newcommand{\gok}{\mathfrak k}

\newcommand{\gop}{\mathfrak p}

\newcommand{\got}{\mathfrak t}
\newcommand{\goz}{\mathfrak z}

\newcommand{\sfA}{\mathsf A} 
 \newcommand{\sfD}{\mathsf D}
\newcommand{\sfE}{\mathsf E} \newcommand{\sfF}{\mathsf F}
\newcommand{\sfG}{\mathsf G}


\newcommand{\gra}{\alpha}     \newcommand{\grg}{\gamma}
   \newcommand{\grs}{\sigma}
 \newcommand{\gro}{\omega}

\newcommand{\grG}{\Gamma} \newcommand{\grD}{\Delta}  
 \newcommand{\grS}{\Sigma}



\newcommand{\mss}{\mathrm{ss}}

\newcommand{\ra}         {\rightarrow}
\newcommand{\lra}        {\longrightarrow}

\newcommand{\vuoto}      {\varnothing}

\renewcommand{\geq}      {\geqslant}
\renewcommand{\leq}      {\leqslant}
\newcommand{\senza}      {\smallsetminus}

\newcommand{\ol}         {\overline}

\newcommand{\wt}         {\widetilde}

\newsavebox{\kdwzero}
\newsavebox{\kdwone}
\newsavebox{\kdwtwo}
\savebox{\kdwzero}{\put(-240,240){0}}
\savebox{\kdwone}{\put(-240,240){1}}
\savebox{\kdwtwo}{\put(-240,240){2}}

\newcommand{\esix}[6]{\begin{picture}(2100,1200)\put(100,600){\small #1}\put(900,0){\small #2}\put(500,600){\small #3}\put(900,600){\small #4}\put(1300,600){\small #5}\put(1700,600){\small #6}\end{picture}}
\newcommand{\eseven}[7]{\begin{picture}(2500,1200)\put(100,600){\small #1}\put(900,0){\small #2}\put(500,600){\small #3}\put(900,600){\small #4}\put(1300,600){\small #5}\put(1700,600){\small #6}\put(2100,600){\small #7}\end{picture}}
\newcommand{\eeight}[8]{\begin{picture}(2900,1200)\put(100,600){\small #1}\put(900,0){\small #2}\put(500,600){\small #3}\put(900,600){\small #4}\put(1300,600){\small #5}\put(1700,600){\small #6}\put(2100,600){\small #7}\put(2500,600){\small #8}\end{picture}}
\renewcommand{\ffour}[4]{\begin{picture}(1700,600)\put(100,0){\small #1}\put(500,0){\small #2}\put(900,0){\small #3}\put(1300,0){\small #4}\end{picture}}

\title[Spherical nilpotent orbits in complex symmetric pairs]{Regular functions on spherical nilpotent\\orbits in complex symmetric pairs:\\exceptional cases}

\email{bravi@mat.uniroma1.it}

\curraddr{\textsc{Dipartimento di Matematica\\ Sapienza Universit\`a di Roma\\ 
Piazzale Aldo Moro 5\\ 00185 Roma, Italy}}

\email{jacopo.gandini@sns.it}

\curraddr{\textsc{Scuola Normale Superiore\\
Piazza dei Cavalieri 7\\ 56126 Pisa, Italy}}

\author{Paolo Bravi, Jacopo Gandini}

\begin{document}

\begin{abstract}
Given an exceptional simple complex algebraic group $G$ and a symmetric pair $(G,K)$, we study the spherical nilpotent $K$-orbit closures in the isotropy representation of $K$. We show that they are all normal except in one case in type $\sfG_2$, and compute the $K$-module structure of the ring of regular functions on their normalizations.
\end{abstract}

\maketitle

\section*{Introduction}

In the present paper we complete the study of the spherical nilpotent orbits in complex symmetric spaces by considering the symmetric pairs $(\mathfrak g,\mathfrak k)$ with $\mathfrak g$ of exceptional type, the cases with $\mathfrak g$ of classical type being considered in \cite{BCG} and \cite{BG}. We refer to those papers for some background and motivation of this work. 

We keep here the notation introduced therein. In particular, here $G$ is a connected simple complex algebraic group of exceptional type, $K$ the fixed point subgroup of an involution $\theta$ of $G$, and $\mathfrak g=\mathfrak k\oplus\mathfrak p$ the corresponding eigenspace decomposition.
 
The spherical nilpotent $K$-orbits in $\mathfrak p$ have been classified by King in \cite{Ki04}. In the exceptional cases such classification is based on \makebox[0pt]{\rule{3pt}{0pt}\rule[4pt]{3pt}{0.8pt}}Dokovi\'c's tables of nilpotent orbits in simple exceptional real Lie algebras \cite{D88a,D88b}.

We give the list of the spherical nilpotent orbits $Ke \subset \mathfrak p$, together with a normal triple $\{e,h,f\}$ and a description of the centralizer of $e$ in $\mathfrak k$.

The normalizer of the centralizer of $e$, $\mathrm N_K(K_e)$, is a wonderful subgroup of $K$. We compute its Luna spherical system and study the surjectivity of the multiplication of sections of globally generated line bundles on the corresponding wonderful variety.

In particular, we obtain that for the wonderful varieties arising from the spherical nilpotent orbits in exceptional symmetric pairs such multiplication is always surjective (Theorem~\ref{teo: projnorm}).

We use this to study the normality of the closure of the spherical nilpotent orbit $Ke$ in $\mathfrak p$ and compute the weight semigroup of its normalization, and we obtain that in the exceptional cases all the spherical nilpotent orbit closures are normal except in one case in type $\mathsf G_2$, see Theorem~\ref{teo:normal}.

In Section~\ref{s:1} we compute the Luna spherical systems. In Section~\ref{s:2} we prove the surjectivity of the multiplication. In Section~\ref{s:3} we deduce our results on normality of orbit closures and compute the weight semigroups.

In Appendix~\ref{A} we report the list of the orbits under consideration, together with normal triples and centralizers. In Appendix~\ref{B} we put the tables with the results of our computations.

%

\section{Spherical systems}\label{s:1}

In this first section, for every spherical nipotent orbit $Ke$ in $\mathfrak p$, we compute the spherical system of $\mathrm N_K(K_e)$, the normalizer of the centralizer, which is a wonderful subgroup of $K$ (see \cite[Section 1]{BCG} for background on wonderful subgroups and wonderful varieties). We take the spherical system given in the tables of Appendix~\ref{B} and explain case-by-case (in most cases we just provide references) that it actually corresponds to the normalizer of $Ke$ which is described in the list in Appendix~\ref{A}.

We keep the notation introduced in \cite{BCG}.

The rank zero cases correspond to parabolic subgroups: 1.1; 2.1; 3.1, 3.2, 3.5; 4.1; 5.1; 6.1; 7.1, 7.2, 7.5; 8.1; 9.1; 10.1; 11.1; 12.1. Notice that if we take the parabolic subgroups containing the opposite of the fixed Borel subgroup, we get that the root subsystem of their Levi factor (containing the fixed maximal torus) is generated by $S^\mathrm p$. This means that, if we take the standard parabolic subgroups (i.e.\ containing the fixed Borel subgroup), we get that the root subsystem of their Levi factor is generated by $(S^\mathrm p)^*$.
In the list all the given parabolic subgroups are standard, by construction.

For the positive rank cases we take the localization of the spherical system on $\supp_S\Sigma$, the support of the spherical roots of the spherical system, which corresponds to a wonderful subgroup of a Levi subgroup $M$ of $K$ that we describe. The investigated wonderful subgroup of $K$ can be obtained by parabolic induction from the wonderful subgroup of $M$. 

The localization on $\supp_S\Sigma$ is a well-known symmetric case (see \cite{BP15}) in: 1.2; 2.2, 2.3; 3.3, 3.4, 3.6; 5.2--5.4; 6.2, 6.3; 7.3, 7.4, 7.6--7.10; 8.2; 9.2, 9.3; 10.2, 10.3; 12.2. It corresponds to a wonderful reductive (but not symmetric) subgroup of $M$ (see \cite{BP15}) in the cases 3.9 and 11.2. It corresponds to a comodel case (see \cite[Section~5]{BGM}) in the cases 5.8, 5.9 and 8.6.

The remaining cases are somewhat all very similar: indeed, they all possess a positive color, giving a morphism onto another spherical system of smaller rank which is very easy to describe (see \cite[\S2.3]{BP16}). Moreover, in all these cases the target of the morphism is always a parabolic induction of a symmetric case. A positive color is by definition an element $D\in\Delta$ which takes nonnegative values on all the spherical roots (through the Cartan pairing $c\colon\Delta\times\Sigma\to\mathbb Z$). 
Every positive color provides a distinguished subset of colors by itself, and the corresponding quotient has $\Delta\setminus\{D\}$ as set of colors and $\{\sigma\in\Sigma\,:\,c(D,\sigma)=0\}$ as set of spherical roots.

To describe the subgroups of $M$, we fix a maximal torus and a Borel subgroup in $K$, and a corresponding set of simple roots $\alpha_1,\ldots,\alpha_n;\alpha'_1,\ldots,\alpha'_{n'}; \ldots$ for all almost-simple factors of $K$. The parabolic subgroups of $M$ are all chosen to be standard. They are in correspondence with subsets of simple roots, the simple roots that generate the root subsystem of the corresponding Levi factor. To be as explicit as possible, we work in the semisimple part $M'$ of $M$, and when $M$ is of classical type, we take $M'$ to be a classical matrix group. 

Let us fix some further notation. Since it is always a nontrivial parabolic induction, the wonderful subgroup of $M'$ corresponding to the target is given as $\tilde L P^\mathrm u$, where $P=L\,P^\mathrm u$ is a parabolic subgroup of $M'$ and $\tilde L\subset L$. The wonderful subgroup corresponding to the source $H=L_HH^\mathrm u$ can be included in $\tilde L P^\mathrm u$, with $L_H\subset \tilde L$ and $H^\mathrm u\subset P^\mathrm u$. 

\subsubsection*{Cases 1.3 and 10.4}
\[\rule[-6pt]{0pt}{6pt}\begin{picture}(4200,1200)(-300,-300)\put(0,0){\usebox{\dynkincthree}}\multiput(0,0)(1800,0){2}{\usebox{\aprime}}\put(3600,0){\usebox{\aone}}\end{picture}\quad\longrightarrow\quad\begin{picture}(4200,1200)(-300,-300)\put(0,0){\usebox{\dynkincthree}}\multiput(0,0)(1800,0){2}{\usebox{\aprime}}\put(3600,0){\usebox{\wcircle}}\end{picture}\]
 Take $M'=\mathrm{Sp}(6)$, $P$ the parabolic subgroup of $M'$ corresponding to $\alpha_1$, $\alpha_2$, and take $\tilde L$ to be the normalizer of $\mathrm{SO}(3)$ in $L\cong\mathrm{GL}(3)$.

The wonderful subgroup $H$ corresponding to the source is given by the same Levi factor $L_H=\tilde L$, and unipotent radical $H^\mathrm u$ of codimension 1 in $P^\mathrm u$. The unipotent radical $P^\mathrm u$ of $P$ is a simple $L$-module of highest weight $2\omega_{\alpha_1}$ which splits into two simple $\tilde L$-submodules of dimension 5 and 1, respectively, so that $H^\mathrm u$ is uniquely determined.  

\subsubsection*{Cases 2.4 and 5.5}
\[\rule[-12pt]{0pt}{12pt}\begin{picture}(7800,1200)(-300,-300)
\put(0,0){\usebox{\dynkinafive}}
\multiput(0,0)(5400,0){2}{\multiput(0,0)(1800,0){2}{\usebox{\wcircle}}}
\put(3600,0){\usebox{\aone}}
\multiput(0,-1500)(7200,0){2}{\line(0,1){1200}}
\put(0,-1500){\line(1,0){7200}}
\multiput(1800,-1200)(3600,0){2}{\line(0,1){900}}
\put(1800,-1200){\line(1,0){3600}}
\end{picture}
\quad\longrightarrow\quad
\begin{picture}(7800,1200)(-300,-300)
\put(0,0){\usebox{\dynkinafive}}
\multiput(0,0)(5400,0){2}{\multiput(0,0)(1800,0){2}{\usebox{\wcircle}}}
\put(3600,0){\usebox{\wcircle}}
\multiput(0,-1500)(7200,0){2}{\line(0,1){1200}}
\put(0,-1500){\line(1,0){7200}}
\multiput(1800,-1200)(3600,0){2}{\line(0,1){900}}
\put(1800,-1200){\line(1,0){3600}}
\end{picture}\]
 Take $M'=\mathrm{SL}(6)$, $P$ the parabolic subgroup of $M'$ corresponding to $\alpha_1$, $\alpha_2$, $\alpha_4$, $\alpha_5$, and take $\tilde L$ to be the normalizer of $\mathrm{SL}(3)$ embedded diagonally into $L\cong\mathrm{S}(\mathrm{GL}(3)\times\mathrm{GL}(3))$. 

The wonderful subgroup $H$ is given by the same Levi factor $L_H = \tilde L$, and unipotent radical $H^\mathrm u$ of codimension 1 in $P^\mathrm u$. The unipotent radical $P^\mathrm u$ of $P$ is a simple $L$-module of highest weight $\omega_{\alpha_1}+\omega_{\alpha_5}$ which splits into two simple $\tilde L$-submodules of dimension 8 and 1, respectively, so that $H^\mathrm u$ is uniquely determined.  

\subsubsection*{Case 2.5}
\[\rule[-9pt]{0pt}{9pt}\begin{picture}(6900,1500)(-300,-300)\put(0,0){\usebox{\dynkinathree}}\multiput(0,0)(3600,0){2}{\usebox{\wcircle}}\multiput(0,-1200)(3600,0){2}{\line(0,1){900}}\put(0,-1200){\line(1,0){3600}}\multiput(1800,0)(4500,0){2}{\usebox{\aone}}\multiput(1800,1200)(4500,0){2}{\line(0,-1){300}}\put(1800,1200){\line(1,0){4500}}\end{picture}\quad\longrightarrow\quad\begin{picture}(6900,1500)(-300,-300)\put(0,0){\usebox{\dynkinathree}}\multiput(0,0)(3600,0){2}{\usebox{\wcircle}}\multiput(0,-1200)(3600,0){2}{\line(0,1){900}}\put(0,-1200){\line(1,0){3600}}\multiput(1800,0)(4500,0){2}{\usebox{\wcircle}}\put(6300,0){\usebox{\vertex}}\end{picture}\]
 Take $M'=\mathrm{SL}(4)\times\mathrm{SL}(2)$, $P$ the parabolic subgroup of $M'$ corresponding to $\alpha_1$, $\alpha_3$, and take $\tilde L$ to be the normalizer of $\mathrm{SL}(2)$ embedded diagonally into $L\cong\mathrm{S}(\mathrm{GL}(2)\times\mathrm{GL}(2))$.

The semisimple parts of the Levi subgroups $L_H$ and $\tilde L$ are equal, while the center of $L_H$ has codimension 1 in the center of $\tilde L$. The unipotent radical $H^\mathrm u$ has codimension 1 in $P^\mathrm u$. The unipotent radical $P^\mathrm u$ of $P$ is the direct sum of two simple $L$-modules of highest weight $\omega_{\alpha_1}+\omega_{\alpha_3}$ and $0$, respectively. The former splits into two simple $\tilde L$-submodules of dimension 3 and 1, respectively, so that $H^\mathrm u$ is the $L_H$-complement of a 1 dimensional submodule of the two 1 dimensional $\tilde L$-submodules of $P^\mathrm u$, that projects nontrivially on both summands. The subgroup $H$ is uniquely determined up to conjugation.  

\subsubsection*{Cases 3.7 and 3.8}
\[\rule[-12pt]{0pt}{12pt}\begin{picture}(3600,1800)(-300,-300)
\put(0,0){\usebox{\dynkindfour}}
\put(0,0){\usebox{\athreene}}
\put(0,0){\usebox{\athreese}}
\end{picture}
\quad\longrightarrow\quad
\begin{picture}(3600,1800)(-300,-300)
\put(0,0){\usebox{\dynkindfour}}
\multiput(3000,1200)(0,-2400){2}{\usebox{\wcircle}}
\end{picture}\]
 Take $M'=\mathrm{SO}(8)$, $P$ the parabolic subgroup of $M'$ corresponding to $\alpha_1$, $\alpha_2$, and take $\tilde L=L\cong\mathrm{GL}(3)\times\mathrm{GL}(1)$.

The semisimple parts of the Levi subgroups $L_H$ and $L$ are equal, while the center of $L_H$ has codimension 1 in the center of $L$. The unipotent radical $H^\mathrm u$ has codimension 3 in $P^\mathrm u$. The unipotent radical $P^\mathrm u$ of $P$ is the direct sum of three simple $L$-modules, all of them of dimension 3. With respect to the semisimple part of $L$, two of them have highest weight $\omega_{\alpha_1}$ and the other one has highest weight $\omega_{\alpha_2}$. Therefore $H^\mathrm u$ is the $L_H$-complement of a 3 dimensional submodule of the two $L$-submodules of $P^\mathrm u$ of highest weight $\omega_{\alpha_1}$ that projects nontrivially on both summands. The subgroup $H$ is uniquely determined up to conjugation.  

\subsubsection*{Cases 6.4 and 8.3}
\[\rule[-21pt]{0pt}{21pt}\begin{picture}(6900,1500)(0,-300)
\put(0,0){\usebox{\dynkindsix}}
\multiput(1800,0)(3600,0){2}{\usebox{\gcircle}}
\put(6600,-1200){\usebox{\aone}}
\end{picture}
\quad\longrightarrow\quad
\begin{picture}(6900,1500)(0,-300)
\put(0,0){\usebox{\dynkindsix}}
\multiput(1800,0)(3600,0){2}{\usebox{\gcircle}}
\put(6600,-1200){\usebox{\wcircle}}
\end{picture}\]
 Take $M'=\mathrm{SO}(12)$, $P$ the parabolic subgroup of $M'$ corresponding to $\alpha_1,\ldots,\alpha_5$, and take $\tilde L$ to be the normalizer of $\mathrm{Sp}(6)$ in $L\cong\mathrm{GL}(6)$. 

The wonderful subgroup $H$ is given by the same Levi factor $L_H = \tilde L$, and unipotent radical $H^\mathrm u$ of codimension 1 in $P^\mathrm u$. The unipotent radical $P^\mathrm u$ of $P$ is a simple $L$-module of highest weight $\omega_{\alpha_2}$ which splits into two simple $\tilde L$-submodules of dimension 14 and 1, respectively, so that $H^\mathrm u$ is uniquely determined.  

\subsubsection*{Cases 6.5 and 9.5}
\[\rule[-9pt]{0pt}{9pt}
\begin{picture}(11400,1500)(-300,-300)\multiput(0,0)(2700,0){2}{\usebox{\aone}}\multiput(0,1200)(2700,0){2}{\line(0,-1){300}}\put(0,1200){\line(1,0){2700}}\put(2700,0){\usebox{\plusdm}}\end{picture}
\quad\longrightarrow\quad
\begin{picture}(11400,1500)(-300,-300)\multiput(0,0)(2700,0){2}{\usebox{\vertex}}\multiput(0,0)(2700,0){2}{\usebox{\wcircle}}\put(2700,0){\usebox{\plusdm}}\end{picture}\]
 Take $M'=\mathrm{SL}(2)\times\mathrm{SO}(2n)$, $P$ the parabolic subgroup of $M'$ corresponding to $\alpha'_2,\ldots,\alpha'_n$, and take $\tilde L$ to be the normalizer of $\mathrm{SO}(2n-3)$ in $L\cong\mathrm{GL}(1)\times\mathrm{GL}(1)\times\mathrm{SO}(2n-2)$.

The semisimple parts of the Levi subgroups $L_H$ and $\tilde L$ are equal, while the center of $L_H$ has codimension 1 in the center of $\tilde L$. The unipotent radical $H^\mathrm u$ has codimension 1 in $P^\mathrm u$. The unipotent radical $P^\mathrm u$ of $P$ is the direct sum of two simple $L$-modules of highest weight $0$ and $\omega_{\alpha'_2}$, respectively. The latter splits into two simple $\tilde L$-submodules of dimension $2n-3$ and $1$, respectively, so that $H^\mathrm u$ is the $L_H$-complement of a 1-dimensional submodule of the two 1-dimensional $\tilde L$-submodules of $P^\mathrm u$, that projects nontrivially on both summands.   

\subsubsection*{Cases 7.11 and 7.12}
\[\begin{array}{ccc}\rule[-18pt]{0pt}{18pt}
\begin{picture}(7800,1200)(-300,-300)
\put(0,0){\usebox{\dynkinesix}}
\put(1800,0){\usebox{\afour}}
\multiput(7200,0)(-3600,-1800){2}{\circle{600}}
\multiput(7200,0)(-25,-25){13}{\circle*{70}}
\put(6900,-300){\multiput(0,0)(-300,0){10}{\multiput(0,0)(-25,25){7}{\circle*{70}}}\multiput(-150,150)(-300,0){10}{\multiput(0,0)(-25,-25){7}{\circle*{70}}}}
\multiput(3600,-1800)(25,25){13}{\circle*{70}}
\put(3900,-1500){\multiput(0,0)(0,300){4}{\multiput(0,0)(-25,25){7}{\circle*{70}}}\multiput(-150,150)(0,300){4}{\multiput(0,0)(25,25){7}{\circle*{70}}}}
\put(0,0){\usebox{\aone}}
\end{picture}
&
\longrightarrow
&
\begin{picture}(7800,1200)(-300,-300)
\put(0,0){\usebox{\dynkinesix}}
\put(1800,0){\usebox{\wcircle}}
\put(3600,-1800){\usebox{\wcircle}}
\put(0,0){\usebox{\aone}}
\end{picture}\\
\downarrow & \rule[-12pt]{0pt}{30pt} & \downarrow \\
\rule[-18pt]{0pt}{18pt}
\begin{picture}(7800,600)(-300,-300)
\put(0,0){\usebox{\dynkinesix}}
\put(1800,0){\usebox{\afour}}
\multiput(7200,0)(-3600,-1800){2}{\circle{600}}
\multiput(7200,0)(-25,-25){13}{\circle*{70}}
\put(6900,-300){\multiput(0,0)(-300,0){10}{\multiput(0,0)(-25,25){7}{\circle*{70}}}\multiput(-150,150)(-300,0){10}{\multiput(0,0)(-25,-25){7}{\circle*{70}}}}
\multiput(3600,-1800)(25,25){13}{\circle*{70}}
\put(3900,-1500){\multiput(0,0)(0,300){4}{\multiput(0,0)(-25,25){7}{\circle*{70}}}\multiput(-150,150)(0,300){4}{\multiput(0,0)(25,25){7}{\circle*{70}}}}
\put(0,0){\usebox{\wcircle}}
\end{picture}
&
\longrightarrow
&
\begin{picture}(7800,600)(-300,-300)
\put(0,0){\usebox{\dynkinesix}}
\put(1800,0){\usebox{\wcircle}}
\put(3600,-1800){\usebox{\wcircle}}
\put(0,0){\usebox{\wcircle}}
\end{picture}
\end{array}\]
Let us look at the morphism given by the first line of the diagram. Here $M'$ is semisimple of type $\mathsf E_6$, $P$ is the parabolic subgroup of $M'$ corresponding to $\alpha_1$, $\alpha_3$, $\alpha_4$, $\alpha_6$, and take $\tilde L\subset L$ to be the whole factor of type $\mathsf A_3$, times a torus in the factor of type $\mathsf A_1$, times the center of $L$.

The semisimple parts of the Levi subgroups $L_H$ and $\tilde L$ are equal, while the center of $L_H$ has codimension 1 in the center of $\tilde L$. The unipotent radical $H^\mathrm u$ has codimension 4 in $P^\mathrm u$. Looking at the entire commutative diagram of morphisms given by positive colors,
one can see that $H^\mathrm u$ must be the $L_H$-complement of a 4 dimensional submodule of the two $\tilde L$-submodules of $P^\mathrm u$ of lowest weight $\alpha_2$ and $\alpha_5$, respectively, that projects nontrivially on both summands. 

\subsubsection*{Case 9.4}
\[\rule[-15pt]{0pt}{15pt}
\begin{picture}(9600,1200)(-300,-300)
\put(0,0){\usebox{\dynkineseven}}
\multiput(0,0)(7200,0){2}{\usebox{\gcircle}}
\put(9000,0){\usebox{\aone}}
\end{picture}
\quad\longrightarrow\quad
\begin{picture}(9600,1200)(-300,-300)
\put(0,0){\usebox{\dynkineseven}}
\multiput(0,0)(7200,0){2}{\usebox{\gcircle}}
\put(9000,0){\usebox{\wcircle}}
\end{picture}
\]
Here $M'$ is semisimple of type $\mathsf E_7$, $P$ is the parabolic subgroup of $M'$ corresponding to $\alpha_1,\ldots,\alpha_6$, and take $\tilde L$ to be the normalizer of $\mathsf F_4$ in $L$. 

The wonderful subgroup $H$ is given by the same Levi factor $L_H = \tilde L$, and unipotent radical $H^\mathrm u$ of codimension 1 in $P^\mathrm u$. The unipotent radical $P^\mathrm u$ of $P$ is a simple $L$-module of highest weight $\omega_{\alpha_1}$ which splits into two simple $\tilde L$-submodules of dimension 26 and 1, respectively, so that $H^\mathrm u$ is uniquely determined.  

\subsubsection*{Case 10.5}
\[\rule[-6pt]{0pt}{6pt}
\begin{picture}(5100,1500)(-300,-300)\multiput(0,0)(2700,0){2}{\usebox{\aone}}\multiput(0,1200)(2700,0){2}{\line(0,-1){300}}\put(0,1200){\line(1,0){2700}}\put(2700,0){\usebox{\dynkinbtwo}}\put(4500,0){\usebox{\aprime}}\end{picture}
\quad\longrightarrow\quad
\begin{picture}(5100,1500)(-300,-300)\multiput(0,0)(2700,0){2}{\usebox{\wcircle}}\multiput(0,0)(2700,0){2}{\usebox{\vertex}}\put(2700,0){\usebox{\dynkinbtwo}}\put(4500,0){\usebox{\aprime}}\end{picture}\]
 Take $M'=\mathrm{SL}(2)\times\mathrm{SO}(5)$, $P$ the parabolic subgroup of $M'$ corresponding to $\alpha'_2$, and take $\tilde L$ to be the normalizer of $\mathrm{SO}(2)$ in $L\cong\mathrm{GL}(1)\times\mathrm{GL}(1)\times\mathrm{SO}(3)$.

The semisimple parts of the Levi subgroups $L_H$ and $\tilde L$ are equal, while the center of $L_H$ has codimension 1 in the center of $\tilde L$. The unipotent radical $H^\mathrm u$ has codimension 1 in $P^\mathrm u$. The unipotent radical $P^\mathrm u$ of $P$ is the direct sum of two simple $L$-modules of highest weight $0$ and $\omega_{\alpha'_2}$, respectively. The latter splits into two simple $\tilde L$-submodules of dimension $2$ and $1$, respectively, so that $H^\mathrm u$ is the $L_H$-complement of a 1 dimensional submodule of the two 1 dimensional $\tilde L$-submodules of $P^\mathrm u$, that projects nontrivially on both summands.

\section{Projective normality}\label{s:2}

Let $\gop = \bigoplus_{i=1}^N \gop_i$ be the decomposition of $\gop$ into irreducible $K$-modules: recall that $N$ can only be equal to $1$ or $2$. If $(G,K)$ is of Hermitian type, then $N=2$ and $\gop_1 \simeq \gop_2^*$ are dual non-isomorphic $K$-modules. Otherwise, $N=1$ and $\mathfrak p$ is irreducible. As in \cite{BCG} and \cite{BG}, to any spherical nilpotent orbit $Ke \subset \gop$ we associate a wonderful $K$-variety $X$ as follows.

Let $e\in\mathfrak p$ be a nonzero nilpotent element, and write $e = \sum_{i=1}^N e_i$ with $e_i \in \gop_i$. Up to reordering the $K$-modules $\gop_i$, we can assume that $e_i \neq 0$ if and only if $i \leq M$, for some $M \leq N$. If $v = \sum_{i=1}^M v_i$ with $v_i \in \gop_i \senza \{0\}$, let $[v_i] \in \mP(\gop_i)$ be the line defined by $v_i$ and set $\pi(v) =  ([v_1], \ldots, [v_M])$: then we get a morphism
$$\pi : Ke \ra \mP(\gop_1) \times \ldots \times \mP(\gop_M).$$
Moreover, $\ol{Ke}$ is the multicone over $\ol{K\pi(e)}$ (see \cite[Proposition 4.2]{BG}), and if $Ke$ is spherical then the stabilizer of $\pi(e)$ coincides with the normalizer $\mathrm{N}_K(K_e)$ (see \cite[Proposition 1.1]{BG}). Therefore the spherical orbit $K\pi(e)$ admits a wonderful compactification $X$ (see \cite[Section 1]{BG} and the references therein).


If $\calL, \calL' \in \Pic(X)$ are globally generated line bundles, we denote by
$$
	m_{\calL,\calL'} : \grG(X,\calL) \otimes \grG(X,\calL') \lra \grG(X,\calL \otimes \calL')  
$$
the multiplication of sections. In this section we prove the following.

\begin{theorem}	\label{teo: projnorm}
Let $(\mathfrak g,\mathfrak k)$ be an exceptional symmetric pair, let $\calO \subset \gop$ be a spherical nilpotent $K$-orbit and let $X$ be the wonderful $K$-variety associated to $\calO$. Then $m_{\calL,\calL'}$ is surjective for all globally generated line bundles $\calL, \calL' \in \Pic(X)$.
\end{theorem}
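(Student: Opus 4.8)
The plan is to translate the surjectivity of $m_{\calL,\calL'}$ into a combinatorial condition on the spherical systems computed in Section~\ref{s:1}, and then to verify that condition on the short list of building blocks to which those systems reduce. First I would recall the general picture for the wonderful $K$-variety $X=\ol{K/\mathrm{N}_K(K_e)}$: the group $\Pic(X)$ is freely generated by the set of colors $\Delta$, a line bundle $\calL_\delta$ is globally generated exactly when $\delta$ is a nonnegative combination of colors, and in that case $\grG(X,\calL_\delta)$ is a multiplicity-free $K$-module whose highest weights are all of the form $\lambda_\delta-\sum_{\sigma\in\Sigma}m_\sigma\sigma$, obtained from the weight $\lambda_\delta$ of the canonical section by subtracting $\mN$-combinations of the spherical roots and keeping dominant weights (see \cite{BCG} and the references therein). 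Since $m_{\calL,\calL'}$ is a $K$-equivariant map between multiplicity-free modules which carries canonical section to canonical section, its surjectivity is equivalent to a purely combinatorial statement: every highest weight $\chi''$ occurring in $\grG(X,\calL\otimes\calL')$ should split as a sum $\chi''=\chi+\chi'$ of highest weights of the two factors whose Cartan component $V(\chi'')$ survives the multiplication. The canonical sections settle the top weight; the content lies in the lower terms.

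Second, I would set up the reduction that makes the problem finite, exactly along the lines of Section~\ref{s:1}. Localization on $\supp_S\Sigma$ identifies the core: the wonderful $M$-variety $X_M$ attached to the localized spherical system. The combinatorial criterion above is compatible with parabolic induction, since the sections of globally generated line bundles on $X$ are induced from those on $X_M$ in a way compatible with multiplication; concretely I would realize this through the fibration of $X$ over a flag variety of $K$ with fibre $X_M$. The lemma to isolate and prove here is that surjectivity of the multiplication for $X_M$ implies it for $X$. This reduces Theorem~\ref{teo: projnorm} to the finitely many cores $X_M$ enumerated in Section~\ref{s:1}.

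Third, I would dispatch the cores according to the classification recalled there. The symmetric cores are handled by the known projective normality of wonderful symmetric varieties; the comodel cores $5.8$, $5.9$, $8.6$ by the results of \cite{BGM}; and the reductive but non-symmetric cores $3.9$ and $11.2$ by direct inspection of their weight sets. In each of these the criterion of the first step can simply be read off the explicit data recorded in Appendix~\ref{B}.

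Finally, the genuinely new cases, and the part I expect to be the main obstacle, are the cores possessing a positive color. For these I would use the morphism $X_M\to X_M'$ onto the quotient spherical system determined by a positive color $D$, whose target is a parabolic induction of a symmetric case and for which surjectivity is therefore already available. The task is then to propagate surjectivity back through this morphism: one must control precisely the spherical roots $\sigma$ with $c(D,\sigma)>0$, which are exactly the ones forgotten by the quotient, and show that they introduce no highest weight in $\grG(X_M,\calL\otimes\calL')$ that fails to decompose. Verifying case by case, from the data of Section~\ref{s:1} and Appendix~\ref{B}, that this fibre direction contributes no new obstruction is where I expect the real work to lie.
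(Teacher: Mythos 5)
Your overall architecture matches the paper's: reduce by localization, parabolic induction and quotients to a finite list of cores, dispatch the symmetric cores via Chiriv\`\i--Maffei, the comodel cores via \cite{BGM}, the rank-one cores as flag varieties, and then confront the remaining cases directly. The gap is in your final step, and it is twofold. First, the direction of the quotient reduction is the opposite of what you need: if $X_M \to X_M'$ is the quotient by a positive color, surjectivity of the multiplication passes from the \emph{source} to the \emph{target} (sections on $X_M'$ are the sections on $X_M$ of line bundles pulled back from $X_M'$), so knowing surjectivity for the parabolic induction of a symmetric case downstream does not by itself ``propagate back'' to $X_M$. What one can legitimately extract from a quotient is only the surjectivity for those low fundamental triples $(D,E,F)$ all of whose colors are pulled back from the quotient, and the paper does use this (on suitable wonderful subvarieties) to kill the triples in its cases $(\mathbf B)$ and $(\mathbf C)$.

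Second, and more seriously, this mechanism cannot reach the hardest triples at all. In the basic cases $(\mathbf A)$ and $(\mathbf D)$ (the cores of the orbits 1.3, 10.4 and 9.4, and of 2.4, 5.5, 6.4, 8.3 after further reduction) there is a low fundamental triple $(D_1,D_3,D_4)$ with $\supp_\Sigma(D_1+D_3-D_4)=\Sigma$; no proper wonderful subvariety and no nontrivial quotient sees the component $s^{\sigma_1+\sigma_2+\sigma_3}V_{D_4}$ of $V_{D_1}\cdot V_{D_3}$, so the combinatorial bookkeeping you propose in your last paragraph has nothing to check it against. The paper settles exactly this point by a non-combinatorial argument: it realizes the map $V_{D_1}\otimes V_{D_3}\to V_{D_4}$ inside the action map $\mathfrak k\otimes\mathfrak p\to\mathfrak p$ for the symmetric pairs $(\mathfrak f_4,\mathfrak c_3+\mathfrak a_1)$ and $(\mathfrak e_8,\mathfrak e_7+\mathfrak a_1)$, and uses the explicit normal triple and the description of $\mathrm N_K(K_e)$ from Section~\ref{s:1} to verify $[\mathfrak u,e]\neq 0$, i.e.\ that the relevant component genuinely survives. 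Without an argument of this kind (or an equivalent explicit computation), your proof does not close.
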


\subsection{General reductions}\label{ss:General reductions}

As already explained in our previous papers (see \cite[Section 2]{BCG} and \cite[Section 3]{BG}), to prove that the multiplication of sections of globally generated line bundles on a wonderful variety $X$ is surjective is enough to show that $X$ can be obtained by operations of localization, quotient and parabolic induction from another wonderful variety $Y$ for which the surjectivity of the multiplication holds.

Here we show that in order to prove Theorem~\ref{teo: projnorm} it is enough to check the surjectivity of the multiplication in the following four cases. Indeed, we check that all the wonderful varieties associated with spherical nilpotent orbits in exceptional symmeric pairs can be obtained via operations of localization, quotient and parabolic induction from one of these four basic cases, or from other cases for which the surjectivity of the multiplication is already known.

\begin{equation}\tag{\textbf{A}}
\rule[-6pt]{0pt}{6pt}
\begin{picture}(4200,1200)(-300,-300)
\put(0,0){\usebox{\dynkincthree}}
\put(0,0){\usebox{\aprime}}
\put(1800,0){\usebox{\aprime}}
\put(3600,0){\usebox{\aone}}
\end{picture}
\end{equation} 

\begin{equation}\tag{\textbf{B}}
\rule[-12pt]{0pt}{12pt}
\begin{picture}(3600,1800)(-300,-300)
\put(0,0){\usebox{\dynkindfour}}
\put(0,0){\usebox{\athreene}}
\put(0,0){\usebox{\athreese}}
\put(1800,0){\usebox{\athreebifurc}}
\end{picture}
\end{equation} 

\begin{equation}\tag{\textbf{C}}
\rule[-18pt]{0pt}{18pt}
\begin{picture}(7800,1200)(-300,-300)
\put(0,0){\usebox{\dynkinesix}}
\put(0,0){\usebox{\afour}}
\multiput(0,0)(3600,-1800){2}{\circle{600}}
\multiput(0,0)(25,-25){13}{\circle*{70}}
\put(300,-300){\multiput(0,0)(300,0){10}{\multiput(0,0)(25,25){7}{\circle*{70}}}\multiput(150,150)(300,0){10}{\multiput(0,0)(25,-25){7}{\circle*{70}}}}
\multiput(3600,-1800)(-25,25){13}{\circle*{70}}
\put(3300,-1500){\multiput(0,0)(0,300){4}{\multiput(0,0)(25,25){7}{\circle*{70}}}\multiput(150,150)(0,300){4}{\multiput(0,0)(-25,25){7}{\circle*{70}}}}
\put(7200,0){\usebox{\aone}}
\end{picture}
\end{equation} 

\begin{equation}\tag{\textbf{D}}
\rule[-18pt]{0pt}{18pt}
\begin{picture}(9600,1200)(-300,-300)
\put(0,0){\usebox{\dynkineseven}}
\multiput(0,0)(7200,0){2}{\usebox{\gcircle}}
\put(9000,0){\usebox{\aone}}
\end{picture}
\end{equation} 

Let $\calO \subset \gop$ be a spherical nilpotent $K$-orbit in an exceptional symmetric pair and let $X$ be the corresponding wonderful variety, with set of spherical roots $\grS$. When $X$ is a flag variety, or equivalently $\Sigma=\vuoto$, the surjectivity of the multiplication is trivial.

The surjectivity of the multiplication on $X$ is reduced to the surjectivity of the multiplication on the localization of $X$, that we denote by $Z$, at the subset $\supp_S \grS\subset S$. These localizations are described in Section~\ref{s:1}.

\subsubsection{Symmetric cases} In the cases 1.2, 2.2, 2.3, 3.3, 3.4, 3.6, 5.2--5.4, 6.2, 6.3, 7.3, 7.4, 7.6--7.10, 8.2, 9.2, 9.3, 10.2, 10.3 the wonderful variety $Z$ is the wonderful compactification of an adjoint symmetric variety, and the surjectivity of the multiplication holds thanks to \cite{CM_projective-normality}.

\subsubsection{Rank one cases} In the cases 11.2 and 12.2 the wonderful variety
$Z$ is a rank one wonderful variety which is homogeneous under its automorphism group (see \cite{Akh}). Therefore in these cases $Z$ is a flag variety for its automorphism group, and the surjectivity of the multiplication is trivial.

\subsubsection{Comodel cases} In the cases 2.4, 2.5, 5.5, 5.8, 5.9, 6.4, 6.5, 8.3, 8.6, 9.5, 10.5, the wonderful variety $Z$ is a localization of a quotient of a wonderful subvariety of a wonderful variety $Y$ for which the surjectivity of the multiplication holds. 

In particular, in all these cases but the last one, we show that we can take $Y$ to be a comodel wonderful variety, for which the surjectivity holds thanks to \cite[Theorem~5.2]{BGM}.

In the cases 5.8, 5.9 and 8.6 the wonderful variety $Z$ itself is a comodel wonderful variety, of cotype $\mathsf D_7$ in the cases 5.8 and 5.9, and of cotype $\mathsf E_8$ in the case 8.6. 

Let $Y$ be the comodel wonderful variety of cotype $\sfE_8$,
which is the wonderful variety with
the following spherical system for a group of semisimple type $\sfD_7$.
\[
\begin{picture}(9600,4800)(-300,-2400)
\put(0,0){\usebox{\dynkindseven}}
\put(-1800,0){
\multiput(1800,0)(1800,0){5}{\usebox{\aone}}
\multiput(10200,-1200)(0,2400){2}{\usebox{\aone}}
\put(1500,-2700){
\multiput(2100,5100)(3600,0){2}{\line(0,-1){1500}}
\put(2100,5100){\line(1,0){7200}}
\put(9300,5100){\line(0,-1){3000}}
\put(9300,2100){\line(-1,0){300}}
\multiput(300,4050)(7200,0){2}{\line(0,-1){450}}
\put(300,4050){\line(1,0){1700}}
\put(2200,4050){\line(1,0){3400}}
\put(5800,4050){\line(1,0){1700}}
\put(3900,800){\line(0,1){1000}}
\put(300,300){\line(0,1){1500}}
\put(300,300){\line(1,0){9300}}
\put(9600,3300){\line(-1,0){200}}
\put(9200,3300){\line(-1,0){200}}
\put(9600,300){\line(0,1){3000}}
\put(3900,800){\line(1,0){4500}}
\multiput(2100,1500)(5400,0){2}{\line(0,1){300}}
\put(2100,1500){\line(1,0){1700}}
\put(4000,1500){\line(1,0){3500}}
}
\multiput(1800,600)(1800,0){2}{\usebox{\toe}}
\put(1800,0){
\put(5400,600){\usebox{\tow}}
\put(5400,600){\usebox{\toe}}
\put(7200,600){\usebox{\tone}}
\put(8400,-600){\usebox{\tonw}}
}
}
\end{picture}
\]

If we consider the wonderful subvariety of $Y$ associated to
$\Sigma \smallsetminus \{\alpha_6, \alpha_7\}$, then the set of
colors $\{D_{\alpha_1}^-, D_{\alpha_2}^-, D_{\alpha_4}^-\}$
is distinguished, and the corresponding
quotient is a parabolic induction of the wonderful variety $Z$ of the cases 2.4 and 5.5.
 
If we consider the wonderful subvariety of $Y$ associated to
$\Sigma \smallsetminus \{\alpha_2, \alpha_3, \alpha_6\}$, then the set of
colors $\{D_{\alpha_4}^-, D_{\alpha_7}^-\}$
is distinguished, and we get the case 2.5.

If we consider the wonderful subvariety of $Y$ associated to
$\Sigma \smallsetminus \{\alpha_1\}$, then the set of
colors $\{D_{\alpha_2}^+, D_{\alpha_2}^-, D_{\alpha_4}^-, D_{\alpha_7}^-\}$
is distinguished, and we get the cases 6.4 and 8.3.

Let now $Y$ be the comodel wonderful variety of cotype $\sfD_{2n}$,
which is the wonderful variety with
the following spherical system for a group of semisimple type $\sfA_{n-1} \times \sfD_n$
\[\begin{picture}(17550,4500)(-300,-2100)
\put(0,0){\usebox{\edge}}
\put(1800,0){\usebox{\susp}}
\put(5400,0){\usebox{\edge}}
\put(600,0){
\put(9300,0){\usebox{\edge}}
\put(11100,0){\usebox{\susp}}
\put(14700,0){\usebox{\bifurc}}
}
\multiput(0,0)(1800,0){2}{\usebox{\aone}}
\multiput(5400,0)(1800,0){2}{\usebox{\aone}}
\put(600,0){
\multiput(9300,0)(1800,0){2}{\usebox{\aone}}
\put(14700,0){\usebox{\aone}}
\multiput(15900,-1200)(0,2400){2}{\usebox{\aone}}
}
\put(7200,-2100){\line(0,1){1200}}
\put(7200,-2100){\line(1,0){8100}}
\put(15300,-2100){\line(0,1){1200}}
\put(5400,-900){\line(0,-1){900}}
\put(5400,-1800){\line(1,0){1700}}
\put(7300,-1800){\line(1,0){6200}}
\multiput(13500,-1800)(0,300){3}{\line(0,1){150}}
\multiput(3600,-1500)(0,300){3}{\line(0,1){150}}
\put(3600,-1500){\line(1,0){1700}}
\put(5500,-1500){\line(1,0){1600}}
\put(7300,-1500){\line(1,0){4400}}
\put(11700,-1500){\line(0,1){600}}
\multiput(1800,-900)(8100,0){2}{\line(0,-1){300}}
\put(1800,-1200){\line(1,0){1700}}
\multiput(3700,-1200)(1800,0){2}{\line(1,0){1600}}
\put(7300,-1200){\line(1,0){2600}}
\put(7200,2400){\line(0,-1){1500}}
\put(7200,2400){\line(1,0){10050}}
\put(17250,2400){\line(0,-1){3000}}
\multiput(17250,-600)(0,2400){2}{\line(-1,0){450}}
\multiput(5400,2100)(9900,0){2}{\line(0,-1){1200}}
\put(5400,2100){\line(1,0){1700}}
\put(7300,2100){\line(1,0){8000}}
\multiput(1800,1500)(9900,0){2}{\line(0,-1){600}}
\put(1800,1500){\line(1,0){3500}}
\put(5500,1500){\line(1,0){1600}}
\put(7300,1500){\line(1,0){4400}}
\multiput(0,1200)(9900,0){2}{\line(0,-1){300}}
\put(0,1200){\line(1,0){1700}}
\put(1900,1200){\line(1,0){3400}}
\put(5500,1200){\line(1,0){1600}}
\put(7300,1200){\line(1,0){2600}}
\multiput(0,600)(1800,0){2}{\usebox{\toe}}
\put(5400,600){\usebox{\toe}}
\put(11700,600){\usebox{\tow}}
\put(15300,600){\usebox{\tow}}
\put(16500,-600){\usebox{\tonw}}
\put(16500,1800){\usebox{\tosw}}
\end{picture}\]
and consider the wonderful subvariety of $Y$ associated to
$\Sigma \smallsetminus \{\alpha_2, \ldots, \alpha_{n-1}\}$. Then the set of
colors $\{D_{\alpha'_i}^-\,:\,2\leq i\leq n\}\cup\{D_{\alpha'_i}^+\,:\,3\leq i\leq n-1\}$
is distinguished, and the corresponding
quotient is a parabolic induction of the wonderful variety $Z$ of the cases 6.5 and 9.5 (respectively obtained for $n=4$ and $n=6$).

We are left with the case 10.5. Here the variety $Z$ is equal to the case $\mathsf a^\mathsf y(1,1)+\mathsf b'(1)$ for which the surjectivity holds thanks to \cite[Proposition~2.12]{BCG}.

\subsubsection{Basic cases} In the cases 1.3 and 10.4 the wonderful variety $Z$ has the spherical system $(\textbf{A})$. 

In the cases 3.7, 3.8 and 3.9 the wonderful variety $Z$ is a wonderful subvariety of the wonderful variety with spherical system $(\textbf{B})$.

In the cases 7.11 and 7.12 the wonderful variety $Z$ has the spherical system $(\textbf{C})$.

In the case 9.4 the wonderful variety $Z$ has the spherical system $(\textbf{D})$.\\

In the following subsections we study the surjectivity of the multiplication in the four basic cases (\textbf{A}), (\textbf{B}), (\textbf{C}), (\textbf{D}). In all these cases, we will denote by $\grS = \{\grs_1, \grs_2, \ldots \}$ the corresponding set of spherical roots, and and by $\grD = \{D_1, D_2, \ldots\}$ the corresponding set of colors.

We keep the notation of \cite[Section 2]{BCG} concerning the combinatorics of colors, and we refer to the same paper for some general background on the multiplication as well, see especially (Section 2.1 therein). In particular, we will freely make use of the partial order $\leq_\grS$ on $\mN\grD$, of the notions of covering difference and height, and of the notions of fundamental triple and of low triple.

\subsection{Case A} \label{ss:CaseA}
\[\begin{picture}(4200,1800)(-300,-900)
\put(0,0){\usebox{\dynkincthree}}
\put(0,0){\usebox{\aprime}}
\put(1800,0){\usebox{\aprime}}
\put(3600,0){\usebox{\aone}}
\end{picture}\] 
Enumerate the spherical roots as $\grs_1 = 2\gra_1$, $\grs_2 = 2\gra_2$, $\grs_3 = \gra_3$, and enumerate the colors as $D_1 = D_{\gra_1}$, $D_2 = D_{\gra_2}$, $D_3 = D_{\gra_3}^+$, $D_4 = D_{\gra_3}^-$. Then the spherical roots are expressed in terms of colors as follows:
\[\begin{array}{rr}
\grs_1 = & 2D_1 - D_2, \\ 
\grs_2 = & -D_1 +2D_2 -2D_4, \\
\grs_3 = & -D_2 + D_3 +D_4
\end{array}\]

\begin{lemma}
Let $\grg \in \mN\grS$ be a covering difference, then either $\grg \in \grS$ or $\grg$ is one of the following:
\begin{itemize}
	\item[-] $\grg_4 = \grs_1+\grs_2 = D_1 +D_2 -2D_4$;
	\item[-] $\grg_5 = \grs_2+\grs_3 = -D_1 + D_2 + D_3 -D_4$;
	\item[-] $\grg_6 = \grs_2+2\grs_3 = -D_1 + 2D_3$;
	\item[-] $\grg_7 = \grs_1+\grs_2+\grs_3 = D_1 + D_3 -D_4$.
\end{itemize}
In particular, $\height(\grg^+) = 2$.
\end{lemma}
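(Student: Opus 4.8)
The plan is to translate the statement into a purely combinatorial question about the monoid $\mN\grS$, using the description of the order $\leq_\grS$ from \cite[Section 2]{BCG}. Recall that a covering difference is an element $\grg \in \mN\grS$ of the form $\grg = \mu - \lambda$ for a cover $\lambda \lessdot \mu$ in $(\mN\grD, \leq_\grS)$. The first step is to observe that, writing $\grg = c_1\grs_1 + c_2\grs_2 + c_3\grs_3$ and decomposing $\grg = \grg^+ - \grg^-$ into its positive and negative parts in $\mN\grD$, the element $\grg^-$ is the smallest $\lambda \in \mN\grD$ with $\lambda + \grg \in \mN\grD$; since enlarging $\lambda$ only makes more intermediate steps realizable, $\grg$ is a covering difference if and only if no proper nonzero sub-combination $\grd = d_1\grs_1 + d_2\grs_2 + d_3\grs_3$ (with $0 \leq d_i \leq c_i$) satisfies $\grd^- \leq \grg^-$ componentwise. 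This reduces everything to a finite inspection once the coefficients are controlled.

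Next I would record the color-coordinates: from the given expressions, the $D_1,\dots,D_4$ coefficients of $\grg$ are $2c_1-c_2$, $-c_1+2c_2-c_3$, $c_3$ and $-2c_2+c_3$, while the negative parts of the candidate first steps are $\grs_1^- = \grs_3^- = D_2$, $\grs_2^- = D_1 + 2D_4$, $\grg_4^- = 2D_4$, $\grg_5^- = D_1 + D_4$, $\grg_6^- = D_1$ and $\grg_7^- = D_4$. With these in hand the forward direction is immediate: for each of $\grs_1,\grs_2,\grs_3,\grg_4,\grg_5,\grg_6,\grg_7$ one checks that every proper sub-combination has a negative part failing to sit inside $\grg^-$ (e.g.\ for $\grg_7$ one has $\grg_7^- = D_4$, while each of $\grs_i^-$, $\grg_4^-$, $\grg_5^-$ involves either $D_1$, $D_2$ or a second copy of $D_4$), so all seven are covering differences.

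For the converse I would run a short case analysis on $(c_1,c_2,c_3)$ to exhibit, for every other $\grg$, a proper sub-combination fitting inside $\grg^-$. When $c_2 = 0$ the coefficient of $D_2$ in $\grg^-$ equals $c_1+c_3$, so $\grs_1$ or $\grs_3$ peels off unless $\grg \in \{\grs_1,\grs_3\}$; when $c_1=c_3=0$ one peels $\grs_2$ unless $\grg = \grs_2$; and when $c_1+c_3 > 2c_2$ the $D_2$-coefficient of $\grg^-$ is positive, so again $\grs_1$ or $\grs_3$ peels off. The essential case is $c_2 \geq 1$ with $c_1+c_3 \geq 1$ and $c_1+c_3 \leq 2c_2$, where $\grg^- = [c_2-2c_1]_+ D_1 + (2c_2-c_3)D_4$ (with $[x]_+=\max(0,x)$) has no $D_2$-component; here the main subtlety — and the step I expect to be the real obstacle — is that the sub-combination one must peel is typically not a single spherical root but a composite one, since cancellation makes $\grg_4^-,\grg_5^-,\grg_6^-,\grg_7^-$ strictly smaller than every $\grs_i^-$. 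Splitting this case by which of $c_1,c_3$ vanishes, I would peel $\grg_7$ when $c_1,c_3 \geq 1$, $\grg_4$ when $c_3=0<c_1$, and $\grg_5$ or $\grg_6$ when $c_1=0<c_3$, checking in each instance that the relevant negative part fits and that the peel is proper unless $\grg$ equals the corresponding $\grg_i$. This yields exactly the seven listed elements, and the final assertion $\height(\grg^+)=2$ then follows by direct inspection of $\grg^+ \in \{2D_1,\,2D_2,\,D_3+D_4,\,D_1+D_2,\,D_2+D_3,\,2D_3,\,D_1+D_3\}$.
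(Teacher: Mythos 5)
Your proposal is correct and takes essentially the same approach as the paper: both reduce the problem to the observation that $\grg=\sum a_i\grs_i$ fails to be a covering difference exactly when some proper nonzero subsum $\grd$ with $\grg-\grd\in\mN\grS$ satisfies $\grg^-+\grd\in\mN\grD$ (equivalently $\grd^-\leq\grg^-$ componentwise), and both settle this by an elementary case analysis on the coefficients $(a_1,a_2,a_3)$ using the same color coordinates. The only difference is organizational: you exhibit the intermediate element (the ``peel'') explicitly in each case, while the paper argues by contradiction from the non-existence conditions $\grg^+-\grg_i\notin\mN\grD$ and $\grg^-+\grg_i\notin\mN\grD$; all of your coordinate computations and the inequalities needed in the essential case (e.g.\ $2a_2-a_3\geq 1$ when $a_1\geq 1$ and $a_1+a_3\leq 2a_2$) check out.
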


\begin{proof}
Denote $\grg_i = \grs_i$ for all $i \leq 3$. Notice that $\grg_i$ is a covering difference for all $i \leq 7$: namely, $\grg_i^- <_\grS \grg_i^+$ and $\grg_i^-$ is maximal with this property. Suppose now that $\grg \in \mN\grS$ is a covering difference and assume that $\grg \neq \grg_i$ for all $i$. Notice that $\grg$ cannot be a nontrivial multiple of any other covering difference. Write $\grg = a_1 \grs_1 + a_2 \grs_2 + a_3 \grs_3 = c_1 D_1 + c_2 D_2 +c_3 D_3 +c_4 D_4$, then
\[\begin{array}{rr}
	c_1 = & 2a_1 - a_2,\\
	c_2 = & -a_1+ 2a_2 -a_3, \\
	c_3 = & a_3, \\
	c_4 = & -2a_2 +a_3.
\end{array}\]

Suppose that $a_3 = 0$. Since $\grg$ cannot be a multiple of a covering difference, it follows $a_1 > 0$ and $a_2 > 0$. Since $\grg^+ - \grg_i \not \in \mN \grD$ for $i =1,2,4$, we have $c_1 + c_2 \leq 1$. On the other hand $c_1 + c_2 = a_1 + a_2 \geq 2$, absurd.

Suppose that $a_3 > 0$. Then $c_3 > 0$. If both $a_1 > 0$ and $a_2 > 0$, then $\grg^+ - \grg_i \not \in \mN\grD$ and $\grg^- + \grg_i \not \in \mN\grD$ for all $i=1,2,3,4,5,7$, and it easily follows $c_2 = c_4 = 0$, hence $a_1 = -c_2 -c_4 = 0$, absurd. We must have $a_1 = 0$: indeed otherwise $a_2=0$ implies $c_1 = 2a_1$, hence $\grg^+ - \grs_1 \in \mN\grD$. Therefore $c_1 <0$ and $c_3 > 0$. Since $\grg^+ - \grs_3 \not \in \mN\grD$ and $\grg^- + \grg_5 \not \in \mN\grD$, it follows $c_4 = 0$, hence $\grg$ is a multiple of $\grg_6$, absurd.
\end{proof}

Since every covering difference $\grg \in \mN \grS$ satisfies $\height(\grg^+)=2$, it follows that every fundamental triple is low. In particular we get the following classification of the low fundamental triples.

\begin{lemma}
Let $(D,E,F)$ be a low fundamental triple, denote $\grg = D+E-F$ and suppose that $\grg \neq 0$. Then, up to switching $D$ and $E$, the triple $(D,E,F)$ is one of the following: 
\begin{itemize}
	\item[-] $(D_1,D_1, D_2)$, $\grg = \grs_1$;
	\item[-] $(D_1,D_2, 2D_4)$, $\grg = \grs_1 + \grs_2$;
	\item[-] $(D_1,D_3, D_4)$, $\grg = \grs_1 + \grs_2 + \grs_3$;
	\item[-] $(D_2,D_2, D_1+2D_4)$, $\grg = \grs_2$;
	\item[-] $(D_2,D_3, D_1+D_4)$, $\grg = \grs_2+\grs_3$;
	\item[-] $(D_3,D_3, D_1)$, $\grg = \grs_2+2\grs_3$;
	\item[-] $(D_3,D_4, D_2)$, $\grg = \grs_3$.
\end{itemize}
\end{lemma}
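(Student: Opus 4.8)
The plan is to read the low fundamental triples directly off the list of covering differences produced in the previous lemma. By the remark preceding the statement, every fundamental triple here is low, so it suffices to classify all fundamental triples $(D,E,F)$ with $\grg:=D+E-F\neq 0$. Recall from \cite[Section~2]{BCG} that in such a triple $D,E\in\grD$ are colors, $F\in\mN\grD$, and $\grg\in\mN\grS$; the defining indecomposability forces $D+E$ and $F$ to have disjoint support, so that $\grg^+=D+E$ and $\grg^-=F$, and $\grg$ is a covering difference. Thus the fundamental triples with $\grg\neq 0$ are in correspondence with the covering differences $\grg$ whose positive part $\grg^+$ is a sum $D+E$ of two colors, the pair $\{D,E\}$ and the element $F=\grg^-$ being determined by $\grg$.

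Granting this, I would simply run through the seven covering differences $\grs_1,\grs_2,\grs_3,\grg_4,\grg_5,\grg_6,\grg_7$ classified in the previous lemma. Each has $\height(\grg^+)=2$, so $\grg^+$ is a sum of two colors, namely $2D_1,\,2D_2,\,D_3+D_4,\,D_1+D_2,\,D_2+D_3,\,2D_3,\,D_1+D_3$ respectively; each of these admits a unique unordered decomposition into two colors, which fixes $\{D,E\}$, and then $F=\grg^-$ is read off. This produces exactly the seven triples in the statement: for instance $\grs_1$ yields $(D_1,D_1,D_2)$, $\grg_4$ yields $(D_1,D_2,2D_4)$, $\grg_7$ yields $(D_1,D_3,D_4)$, and $\grg_6$ yields $(D_3,D_3,D_1)$. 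Conversely each such triple is genuinely fundamental and low, since its $\grg$ is a covering difference (hence indecomposable) with $\height(\grg^+)=2$.

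The only point requiring an argument rather than bookkeeping is the reduction $\grg^+=D+E$, that is, that in a fundamental triple $D+E$ and $F$ share no color; this is where I expect the real content to lie. I would prove it by contradiction: if a color were common to the support of $D+E$ and of $F$, say $D$ (the case of $E$ being symmetric), then $F=D+F'$ with $F'\in\mN\grD$, and from $\grg=E-F'\in\mN\grS$ one gets $F'\leq_\grS E$; hence $F$ splits as $D+F'$ with $D\leq_\grS D$ and $F'\leq_\grS E$, a trivial factorization contradicting indecomposability. Once this is in place the remainder is purely a matter of matching positive parts with color pairs: since the seven elements $\grg^+$ listed above are pairwise distinct and each decomposes into two colors in only one way, the correspondence $\grg\leftrightarrow(D,E,F)$ is a bijection, so the list of seven triples is complete and without repetitions.
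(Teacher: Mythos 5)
Your proposal is correct and matches the paper's (essentially unwritten) argument: the paper presents this classification as an immediate consequence of the preceding lemma on covering differences together with the observation that every fundamental triple is low, which is exactly the bookkeeping you carry out, and your seven triples and the values of $\grg$ agree with the paper's list. The only step to state more carefully is the passage from ``$D+E$ and $F$ have disjoint supports'' to ``$\grg$ is a covering difference'': disjoint supports alone gives $\grg^+=D+E$ and $\grg^-=F$ but not maximality of $F$ below $D+E$, so this is where the \emph{low} hypothesis (rather than indecomposability again) must be invoked — or, in the present case, one can verify it directly, since each $\grg^-$ occurring in the list is minimal for $\leq_\grS$, forcing any chain from $F$ up to $D+E$ to have length one.
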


\begin{proposition}
The multiplication $m_{D,E}$ is surjective for all $D,E\in\mathbb N\Delta$. 
\end{proposition}

\begin{proof}
It is enough to show that $s^{D+E-F}V_F\subset V_D\cdot V_E$ for all low fundamental triples. Moreover, notice that in this case the surjectivity of the multiplication is already known for all proper wonderful subvarieties. Indeed, if we remove the spherical root $\sigma_3$ we have a parabolic induction of a wonderful symmetric variety, if we remove the spherical root $\sigma_1$ we have a parabolic induction of a wonderful subvariety of $\mathsf a^\mathsf y(1,1)+\mathsf b'(1)$, if we remove $\sigma_2$ we have a parabolic induction of the direct product of two rank one wonderful varieties. Therefore, we are left to the only low fundamental triple with $\mathrm{supp}_\Sigma(D+E-F)=\{\sigma_1,\sigma_2,\sigma_3\}$, namely $(D_1,D_3,D_4)$.

Let us consider the symmetric pair $(\mathfrak g=\mathfrak f_4,\mathfrak k=\mathfrak c_3+\mathfrak a_1)$, number 10 in our list, we have $\mathfrak p=V(\omega_3+\omega')$, and the $\mathfrak k$-action on $\mathfrak p$ gives a map $\varphi\colon\mathfrak k\otimes\mathfrak p\to\mathfrak p$. Restricting the map to the tensor product of $\mathfrak c_3\subset\mathfrak k$ with the simple $\mathfrak c_3$-submodule containing the highest weight vector $V(\omega_3)\subset\mathfrak p$, we get a map $\varphi\colon\mathfrak c_3\otimes V(\omega_3)\to V(\omega_3)$, hence a map $\varphi\colon V_{D_1}\otimes V_{D_3} \to V_{D_4}$.

Let us fix $h_{D_3}=e$ as in the case~10.4 of the list in Appendix~\ref{A}, and take $\mathrm N_K(K_e)$. Recall its description given in Section~\ref{s:1}, cases~1.3 and~10.4. It follows that $h_{D_1}$ belongs to the 1-dimensional $L_{[e]}$-submodule of $P^\mathrm u$ ($\mathfrak u$ in the notation of Appendix~\ref{A}, case~10.4). By construction, we have $[\mathfrak u,e]\neq0$, that is, $\varphi(h_{D_1}\otimes h_{D_3})\neq 0$. 
\end{proof}

\subsection{Case B} \label{ss:CaseB}

\[\begin{picture}(3600,3000)(-300,-1500)
\put(0,0){\usebox{\dynkindfour}}
\put(0,0){\usebox{\athreene}}
\put(0,0){\usebox{\athreese}}
\put(1800,0){\usebox{\athreebifurc}}
\end{picture}\]

Enumerate the spherical roots as $\grs_1 = \gra_1 + \gra_2 + \gra_3$, $\grs_2 = \gra_1 + \gra_2 + \gra_4$, $\grs_3 = \gra_2 + \gra_3 + \gra_4$, and enumerate the colors as $D_1 = D_{\gra_4}$, $D_2 = D_{\gra_3}$, $D_3 = D_{\gra_1}$. Then the spherical roots are expressed in terms of colors as follows: 
\[\begin{array}{rr}
\grs_1 = & -D_1 + D_2 +D_3, \\ 
\grs_2 = & D_1 - D_2 +D_3, \\
\grs_3 = & D_1 + D_2 -D_3.
\end{array}\]

It is immediate to see that every covering difference $\grg \in \mN \grS$ is either a spherical root or the sum of two spherical roots, and it satisfies $\height(\grg^+) = 2$. As well, one can easily get the following description of the low fundamental triples.

\begin{lemma}\label{lem:triple}
Let $(D,E,F)$ be a low fundamental triple, denote $\grg = D+E-F$ and suppose that $\grg \neq 0$. Then, up to switching $D$ and $E$, the triple $(D,E,F)$ is one of the following: 
\begin{itemize}
	\item[-] $(D_1,D_2, D_3)$, $\grg = \grs_3$;
	\item[-] $(D_2,D_3, D_1)$, $\grg = \grs_1$;
	\item[-] $(D_3,D_1, D_2)$, $\grg = \grs_2$;
	\item[-] $(D_1,D_1, 0)$, $\grg = \grs_2+\grs_3$;
	\item[-] $(D_2,D_2, 0)$, $\grg = \grs_1+\grs_3$;
	\item[-] $(D_3,D_3, 0)$, $\grg = \grs_1+\grs_2$.
\end{itemize}
\end{lemma}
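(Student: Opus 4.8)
The plan is to obtain the list exactly as in Case~A, by reading it off from the covering differences. Recall the mechanism used there (and going back to \cite[Section~2]{BCG}): a low fundamental triple $(D,E,F)$ with $\grg := D+E-F \neq 0$ is equivalent to the datum of a covering difference $\grg \in \mN\grS$ whose positive part is split as a sum of two colors $\grg^+ = D+E$, the negative part being $F = \grg^-$. The requirement that $F$ be the full negative part (equivalently, that $F$ and $D+E$ have disjoint support) is precisely what makes the triple fundamental, while the requirement that $D$ and $E$ be single colors is exactly $\height(\grg^+)=2$, which is what makes it low. Since every covering difference here satisfies $\height(\grg^+)=2$, every fundamental triple is automatically low, and the classification reduces to listing the covering differences and splitting each $\grg^+$.

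By the description recalled just before the statement, the covering differences are the three spherical roots $\grs_1,\grs_2,\grs_3$ together with the three pairwise sums $\grs_1+\grs_2=2D_3$, $\grs_1+\grs_3=2D_2$, $\grs_2+\grs_3=2D_1$. The whole system is invariant under the symmetries permuting the indices $1,2,3$ (which permute simultaneously the $\grs_i$ and the $D_i$), so it suffices to treat one representative of each orbit. For a spherical root, say $\grs_1=-D_1+D_2+D_3$, one has $\grg^+=D_2+D_3$ and $\grg^-=D_1$, giving the triple $(D_2,D_3,D_1)$; the symmetric images are $(D_3,D_1,D_2)$ and $(D_1,D_2,D_3)$, with $\grg=\grs_2$ and $\grg=\grs_3$ respectively. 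For a pairwise sum, say $\grs_2+\grs_3=2D_1$, one has $\grg^+=2D_1$ and $\grg^-=0$, giving $(D_1,D_1,0)$; the symmetric images are $(D_2,D_2,0)$ and $(D_3,D_3,0)$. These are exactly the six triples in the statement.

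I expect the only delicate point to be the correct handling of the definitions rather than any computation: one must be sure that ``fundamental'' forces $F=\grg^-$ and $D+E=\grg^+$ (so that each covering difference contributes exactly one triple), and that ``low'' is equivalent to $\height(\grg^+)=2$. The underlying combinatorial claim, that the covering differences are only the six elements above, is the one genuinely computational input; it follows at once from writing $\grg=a_1\grs_1+a_2\grs_2+a_3\grs_3$ and noting that its color-coordinates satisfy $c_i+c_j=2a_k\geq 0$ for $\{i,j,k\}=\{1,2,3\}$, so that at most one coordinate can be negative and the constraint $\height(\grg^+)=2$ leaves only the six tuples of $(a_1,a_2,a_3)$ recorded above.
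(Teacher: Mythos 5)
Your proposal is correct and is exactly the argument the paper leaves implicit (the paper dismisses this lemma with ``one can easily get''): the six triples are precisely the covering differences $\grs_i$ and $\grs_i+\grs_j$, each read as $\grg^+=D+E$ split into two colors and $F=\grg^-$, with the $S_3$-symmetry of the Cartan pairing reducing the check to one representative per orbit. Two minor imprecisions are worth flagging, neither of which affects the conclusion. First, in the conventions of \cite{BCG} the adjective \emph{fundamental} refers to the covering condition ($D+E$ covers $F$), while \emph{low} refers to the absence of a common part, i.e.\ $F=\grg^-$ and $D+E=\grg^+$; you have the two labels interchanged, although the combined characterization you actually use (covering difference with $\grg^+=D+E$ and $\grg^-=F$) is the right one, and the implication ``$\height(\grg^+)=2$ for all covering differences $\Rightarrow$ every fundamental triple is low'' is exactly the paper's. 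Second, your closing justification of the list of covering differences invokes the constraint $\height(\grg^+)=2$, which is itself part of the assertion being established and not a general property of covering differences; to avoid the circularity one should argue as in Cases A and C of the paper, namely for $\grg=\sum a_i\grs_i$ with $a_1\geq a_2\geq a_3$ exhibit an intermediate element between $\grg^-$ and $\grg^+$ (for instance $\grg^-+\grs_1$ when $c_1<0$ and $\grg\neq\grs_1$, or $2D_k=\grs_i+\grs_j$ when $\grg^-=0$ and $\grg\neq\grs_i+\grs_j$), which rules out everything except the six elements and then yields $\height(\grg^+)=2$ as a consequence.
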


\begin{proposition}
The multiplication $m_{D,E}$ is surjective for all $D,E\in\mathbb N\Delta$. 
\end{proposition}

\begin{proof}
It is enough to show that $s^{D+E-F}V_F\subset V_D\cdot V_E$ for all low fundamental triples. For the wonderful subvarieties of rank 1 the surjectivity of the multiplication is known. Thus we are left with the low fundamental triples $(D,E,F)$ where $D+E-F$ is the sum of two spherical roots. By symmetry, it is enough to consider $(D_3,D_3,0)$. The subset  $\{D_1, D_2\}\subset\Delta$ is distinguished, and the quotient is a wonderful symmetric variety (of rank 1), whose multiplication is surjective.     
\end{proof}

\subsection{Case C} \label{ss:CaseC}

\[\begin{picture}(7800,3000)(-300,-2100)
\put(0,0){\usebox{\dynkinesix}}
\put(0,0){\usebox{\afour}}
\multiput(0,0)(3600,-1800){2}{\circle{600}}
\multiput(0,0)(25,-25){13}{\circle*{70}}
\put(300,-300){\multiput(0,0)(300,0){10}{\multiput(0,0)(25,25){7}{\circle*{70}}}\multiput(150,150)(300,0){10}{\multiput(0,0)(25,-25){7}{\circle*{70}}}}
\multiput(3600,-1800)(-25,25){13}{\circle*{70}}
\put(3300,-1500){\multiput(0,0)(0,300){4}{\multiput(0,0)(25,25){7}{\circle*{70}}}\multiput(150,150)(0,300){4}{\multiput(0,0)(-25,25){7}{\circle*{70}}}}
\put(7200,0){\usebox{\aone}}
\end{picture}\]

Enumerate the spherical roots as $\grs_1 = \gra_1 + \gra_3 + \gra_4 + \gra_5$, $\grs_2 = \gra_1 +\gra_2 + \gra_3 + \gra_4$, $\grs_3 = \gra_6$, and enumerate the colors as $D_1 = D_{\gra_1}$, $D_2 = D_{\gra_2}$, $D_3 = D_{\gra_5}$, $D_4 = D_{\gra_6}^+$, $D_5 = D_{\gra_6}^-$. Then the spherical roots are expressed in terms of colors as follows: 
\[\begin{array}{rr}
\grs_1 = & D_1 - D_2 + D_3 - D_5, \\ 
\grs_2 = & D_1 +D_2 -D_3, \\
\grs_3 = & -D_3 + D_4 +D_5.
\end{array}\]

\begin{lemma}
Let $\grg \in \mN\grS$ be a covering difference, then either $\grg \in \grS$ or $\grg$ is one of the following:
\begin{itemize}
	\item[-] $\grg_4 = \grs_1+\grs_2 = 2D_1 -D_5$;
	\item[-] $\grg_5 = \grs_1+\grs_3 = D_1 -D_2 +D_4$.
\end{itemize}
In particular, $\height(\grg^+) = 2$.
\end{lemma}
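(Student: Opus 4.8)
The plan is to mimic the computation carried out for Case~A. I parametrize an arbitrary $\grg = a_1\grs_1 + a_2\grs_2 + a_3\grs_3 \in \mN\grS$ by the triple $(a_1,a_2,a_3)$ and expand it in the colors as $\grg = \sum_i c_i D_i$, where
\[
c_1 = a_1 + a_2,\quad c_2 = a_2 - a_1,\quad c_3 = a_1 - a_2 - a_3,\quad c_4 = a_3,\quad c_5 = a_3 - a_1.
\]
Here $\grg^+$ collects the positive $c_i$ and $\grg^-$ the absolute values of the negative ones. Throughout I will use the same criterion as in Case~A: if $\grg$ is a covering difference then $\grg^+ - \grg' \notin \mN\grD$ for every covering difference $\grg'$ with $0 <_\grS \grg' <_\grS \grg$; and a covering difference is never a nontrivial multiple of a covering difference.

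First I would confirm that the five listed elements really are covering differences. For $\grs_1,\grs_2,\grs_3$ this is automatic, since they are the minimal nonzero elements of $\mN\grS$. For $\grg_4 = 2D_1 - D_5$ and $\grg_5 = D_1 - D_2 + D_4$ the only elements strictly between $0$ and them are the two spherical roots in their support, so it suffices to check that subtracting either of those roots from $\grg_4^+$ (resp.\ $\grg_5^+$) falls outside $\mN\grD$; for instance $\grg_4^+ - \grs_1 = D_1 + D_2 - D_3 + D_5$ has a negative $D_3$-coefficient.

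The body of the proof is a case analysis on $a_1$. If $a_1 = 0$, then $c_3 = -(a_2+a_3)$ is the only negative coefficient, and subtracting $\grs_2$ from $\grg^+$ merely lowers the $D_1$- and $D_2$-coefficients by $1$ and raises the $D_3$-coefficient, so $\grg^+ - \grs_2 \in \mN\grD$ whenever $a_2 \geq 1$; combined with $a_3 \geq 1$ (which makes $\grs_2 <_\grS \grg$ strict) this violates the criterion. Hence $a_2 = 0$ or $a_3 = 0$, leaving only multiples of $\grs_2$ or $\grs_3$, i.e.\ $\grg \in \{\grs_2,\grs_3\}$. If $a_1 \geq 1$ and $a_2 = a_3 = 0$ then $\grg = \grs_1$; if $a_1 \geq 1$ and exactly one of $a_2,a_3$ is positive, I split on whether $a_1$ equals that coefficient. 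Equality yields a multiple of $\grg_4$ (resp.\ $\grg_5$), hence $\grg = \grg_4$ (resp.\ $\grg_5$); in the strict case a single subtraction of $\grs_1$, $\grs_2$ or $\grs_3$ returns to $\mN\grD$ and breaks the covering property, exactly by tracking the sign of $c_2,c_3,c_5$.

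The remaining case, $a_1,a_2,a_3 \geq 1$, looks the most involved but collapses at once. Since $a_1,a_2 \geq 1$ and $a_3 \geq 1$ we have $0 <_\grS \grg_4 <_\grS \grg$, and because $c_1 = a_1 + a_2 \geq 2$, subtracting $\grg_4 = 2D_1 - D_5$ only lowers the $D_1$-coefficient of $\grg^+$ by $2$ and raises the $D_5$-coefficient by $1$; thus $\grg^+ - \grg_4 \in \mN\grD$, contradicting the criterion. So no covering difference has all three coefficients positive, and the list is complete; the equality $\height(\grg^+) = 2$ is then read off the five expansions. I expect the only genuine difficulty to be the bookkeeping of signs: each claim that a difference ``returns to $\mN\grD$'' must be matched against the correct sign pattern of the $c_i$, precisely as in the Case~A computation.
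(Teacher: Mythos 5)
Your proposal is correct and follows essentially the same route as the paper: the same expansion of $\grg=a_1\grs_1+a_2\grs_2+a_3\grs_3$ in the colors, the same covering-difference criterion, and the same witnesses (in particular, your observation that $c_1=a_1+a_2\geq 2$ forces $\grg^+-\grg_4\in\mN\grD$ in the all-positive case is just the direct form of the paper's contrapositive argument that $\grg^+-\grg_4\notin\mN\grD$ forces $c_1\leq 1$). I checked the four strict sub-cases you leave to ``sign bookkeeping'' ($a_3=0$ with $a_1\neq a_2$, and $a_2=0$ with $a_1\neq a_3$), and in each one the single spherical root you would subtract ($\grs_1$, $\grs_2$, $\grs_1$, $\grs_3$ respectively) does return $\grg^+$ to $\mN\grD$, so the argument closes as claimed.
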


\begin{proof}
Denote $\grg_i = \grs_i$ for all $i \leq 3$. Notice that $\grg_i$ is a covering difference for all $i \leq 5$: namely, $\grg_i^- <_\grS \grg_i^+$ and $\grg_i^-$ is maximal with this property. Suppose now that $\grg \in \mN\grS$ is a covering difference and assume that $\grg \neq \grg_i$ for all $i$. Write $\grg = a_1 \grs_1 + a_2 \grs_2 + a_3 \grs_3 = c_1 D_1 + c_2 D_2 +c_3 D_3 +c_4 D_4 +c_5 D_5$, then
\[\begin{array}{rr}
	c_1 = & a_1 +a_2,\\
	c_2 = & -a_1 +a_2, \\
	c_3 = & a_1 -a_2 -a_3, \\
	c_4 = & a_3, \\
	c_5 = & -a_1 +a_3.
\end{array}\]

Notice that $\grg$ cannot be a nontrivial multiple of any other covering difference, thus $c_1 > 0$. 

Suppose that $a_1 >0$, $a_2 > 0$ and $a_3 > 0$. Then none of $\grg^+ - \grg_i$ and $\grg^- + \grg_i$ is in $\mN \grD$, for every $i \leq 5$. It easily follows that $c_1=1$ and $c_2 = 0$, which is absurd because $c_1 + c_2 = 2 a_2$.

To conclude the proof it is enough to show that, if $a_i = 0$ for some $i$, then $\grg$ is a multiple of some $\grg_i$. Suppose that $a_1 = 0$: then $\grg^+ - \grs_2 \not \in \mN \grD$, hence $c_2 \leq 0$ and it follows $a_2 = 0$. Suppose that $a_2 = 0$: then $\grg^+ - \grg_5 \not \in \mN \grD$, hence $c_4 \leq 0$, and it follows $a_3 = 0$. Suppose that $a_3 = 0$: then $\grg^- + \grg_4 \not \in \mN \grD$, hence $c_5 \geq 0$ and it follows $a_1 = 0$. 
\end{proof}

Since every covering difference $\grg \in \mN \grS$ satisfies $\height(\grg^+)=2$, it follows that every fundamental triple is low. In particular we get the following classification of the low fundamental triples.

\begin{lemma}
Let $(D,E,F)$ be a low fundamental triple, denote $\grg = D+E-F$ and suppose that $\grg \neq 0$. Then, up to switching $D$ and $E$, the triple $(D,E,F)$ is one of the following: 
\begin{itemize}
	\item[-] $(D_1, D_1, D_5)$, $\grg = \grs_1+\grs_2$;
	\item[-] $(D_1, D_2, D_3)$, $\grg = \grs_2$;
	\item[-] $(D_1, D_3, D_2 +D_5)$, $\grg = \grs_1$;
	\item[-] $(D_1, D_4, D_2)$, $\grg = \grs_1+\grs_3$;
	\item[-] $(D_4, D_5, D_3)$, $\grg = \grs_3$.
\end{itemize}
\end{lemma}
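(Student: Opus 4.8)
The plan is to read off the list directly from the five covering differences $\grs_1,\grs_2,\grs_3,\grg_4,\grg_5$ produced in the previous Lemma, exploiting the fact recorded there that each of them has $\height(\grg^+)=2$, the same height as $D+E$ when $D,E$ are two colors. Recall from \cite[Section~2]{BCG} that a fundamental triple consists of two colors $D,E\in\grD$ together with a $\leq_\grS$-minimal element $F\in\mN\grD$ satisfying $F\leq_\grS D+E$, so that $\grg=D+E-F\in\mN\grS$; by the remark preceding the statement every such triple is low. To locate $F$ I descend from $D+E$ inside $\mN\grD$ by subtracting covering differences. The key elementary reduction is that a covering difference $\grg$ may be subtracted from $D+E$ while remaining in $\mN\grD$ precisely when $\grg^+\leq D+E$ coordinatewise in the basis of colors (since $\grg^-$ has support disjoint from $\grg^+$); because $\height(\grg^+)=2=\height(D+E)$, this forces the equality $\grg^+=D+E$, and then $D+E-\grg=\grg^-$.

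Next I would tabulate the positive and negative parts of the five covering differences: $\grs_1^+=D_1+D_3$, $\grs_2^+=D_1+D_2$, $\grs_3^+=D_4+D_5$, $\grg_4^+=2D_1$, $\grg_5^+=D_1+D_4$, with corresponding negative parts $\grs_1^-=D_2+D_5$, $\grs_2^-=\grs_3^-=D_3$, $\grg_4^-=D_5$ and $\grg_5^-=D_2$. As these five positive parts are pairwise distinct, by the reduction above exactly five unordered pairs of colors admit a covering difference to subtract, namely $\{D_1,D_3\}$, $\{D_1,D_2\}$, $\{D_4,D_5\}$, $\{D_1,D_1\}$ and $\{D_1,D_4\}$, and in each case the unique resulting candidate is $F=\grg^-$. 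These are exactly the five triples in the statement, together with the indicated values of $\grg$. I would then check that each candidate $F$ is genuinely $\leq_\grS$-minimal, i.e.\ that no covering difference has positive part $\leq F$: this is immediate, since every $\grg^-$ above has support contained in $\{D_2,D_3,D_5\}$, whereas every $\grg^+$ contains $D_1$ or $D_4$. This verification simultaneously confirms that the listed triples are fundamental and that no further covering difference can be subtracted, so that $\grg$ is in each case a single covering difference rather than a longer chain.

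Finally I would dispose of the remaining ten unordered pairs of colors. For each of them $D+E$ coincides with none of the five positive parts listed above, so no covering difference can be subtracted and $D+E$ is already $\leq_\grS$-minimal; hence $\grg=0$ and the pair contributes nothing under the hypothesis $\grg\neq0$. The whole argument is essentially bookkeeping, and the only point requiring a little care is the minimality check of the candidate $F$, which is what guarantees both that the five listed triples are truly fundamental and that there are no hidden triples coming from two-step descents.
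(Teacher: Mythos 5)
Your proposal is correct and follows exactly the route the paper intends: the paper states this lemma without proof as an immediate consequence of the preceding classification of covering differences, and your bookkeeping (matching each pair $D+E$ of height $2$ against the positive parts $\grg^+$ of the five covering differences, forcing $\grg^+=D+E$ and $F=\grg^-$, then checking minimality of each $\grg^-$) is precisely the omitted argument. The computations of the $\grg^\pm$ and the resulting five triples all check out.
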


\begin{proposition}
The multiplication $m_{D,E}$ is surjective for all $D,E\in\mathbb N\Delta$. 
\end{proposition}

\begin{proof}
It is enough to show that $s^{D+E-F}V_F\subset V_D\cdot V_E$ for all low fundamental triples. For the wonderful subvarieties of rank 1 the surjectivity of the multiplication is known. We are left with the low fundamental triples $(D,E,F)$ where $D+E-F$ is the sum of two spherical roots, that is, $(D_1,D_1,D_5)$ and $(D_1,D_4,D_2)$.

To treat $(D_1,D_1,D_5)$ we can consider the wonderful subvariety with spherical roots $\sigma_1$ and $\sigma_2$. Here $\{D_2,D_3\}$ is a distinguished subset of colors, giving a wonderful symmetric variety (of rank 1) as quotient, whose multiplication is surjective.

To treat $(D_1,D_4,D_2)$ we consider the wonderful subvariety with spherical roots $\sigma_1$ and $\sigma_3$. Now $\{D_3,D_5\}$ is a distinguished subset of colors, giving again a wonderful symmetric variety (of rank 1) as quotient, with surjective multiplication.     
\end{proof}

\subsection{Case D}

\[\begin{picture}(9600,2700)(-300,-1800)
\put(0,0){\usebox{\dynkineseven}}
\multiput(0,0)(7200,0){2}{\usebox{\gcircle}}
\put(9000,0){\usebox{\aone}}
\end{picture}\] 

Enumerate the spherical roots as $\grs_1 = 2\gra_1 + \gra_2 + 2\gra_3 + 2\gra_4 + \gra_5$, $\grs_2 = \gra_2 + \gra_3 + 2\gra_4 + 2\gra_5 + 2\gra_6$, $\grs_3 = \gra_7$, and enumerate the colors as $D_1 = D_{\gra_1}$, $D_2 = D_{\gra_6}$, $D_3 = D_{\gra_7}^+$, $D_4 = D_{\gra_7}^-$. Then the spherical roots are expressed in terms of colors as follows: 
\[\begin{array}{rr}
\grs_1 = & 2D_1 - D_2, \\ 
\grs_2 = & -D_1 +2D_2 -2D_4, \\
\grs_3 = & -D_2 + D_3 +D_4.
\end{array}\]

Notice that same Cartan matrix is the same of case (\textbf A). In particular, the description of the covering differences and of the low fundamental triples is the same.

\begin{proposition}
The multiplication $m_{D,E}$ is surjective for all $D,E\in\mathbb N\Delta$. 
\end{proposition}

\begin{proof}
The proof is similar to that in case (\textbf A). We have to show that $s^{D+E-F}V_F\subset V_D\cdot V_E$ for all low fundamental triples. In this case as well, the surjectivity of the multiplication is known for all proper wonderful subvarieties. Indeed, if we remove the spherical root $\sigma_3$ we have a parabolic induction of a wonderful symmetric variety, if we remove the spherical root $\sigma_1$ we have a parabolic induction of a wonderful subvariety of $\mathsf a^\mathsf y(1,1)+\mathsf d(5)$ (see \cite[\S2.2]{BCG}), and if we remove $\sigma_2$ we have a parabolic induction of a direct product of two rank 1 wonderful varieties. Therefore, we are left with the only low fundamental triple with $\mathrm{supp}_\Sigma(D+E-F)=\{\sigma_1,\sigma_2,\sigma_3\}$, namely $(D_1,D_3,D_4)$.

Let us consider the symmetric pair $(\mathfrak g=\mathfrak e_8,\mathfrak k=\mathfrak e_7+\mathfrak a_1)$, number 9 in our list. We have $\mathfrak p=V(\omega_7+\omega')$, and the $\mathfrak k$-action on $\mathfrak p$ gives a map $\varphi\colon\mathfrak k\otimes\mathfrak p\to\mathfrak p$. Restricting the map to the tensor product of $\mathfrak e_7\subset\mathfrak k$ with the simple $\mathfrak e_7$-submodule containing the highest weight vector $V(\omega_7)\subset\mathfrak p$, we get a map $\varphi\colon\mathfrak e_7\otimes V(\omega_7)\to V(\omega_7)$, hence a map $\varphi\colon V_{D_1}\otimes V_{D_3} \to V_{D_4}$.

Let us fix $h_{D_3}=e$ as in the case~9.4 of the list in Appendix~\ref{A}, and take $\mathrm N_K(K_e)$. Recall its description given in Section~\ref{s:1}, case~9.4. It follows that $h_{D_1}$ lies in the 1-dimensional $L_{[e]}$-submodule of $P^\mathrm u$ ($\mathfrak u$ in the notation of Appendix~\ref{A}, case~9.4). By construction, we have $[\mathfrak u,e]\neq0$, that is, $\varphi(h_{D_1}\otimes h_{D_3})\neq 0$. 
\end{proof}


\section{Normality and semigroups}\label{s:3}

%

Let $e \in \gop$ be a nilpotent element and suppose that $Ke$ is spherical, we study in this section the normality of the closure $\ol{Ke}$, as well as the $K$-module structure of the coordinate ring $\mC[\wt{Ke}]$ of its normalization. This reduces to a combinatorial problem on the wonderful $K$-variety $X$ that we associated to $Ke$. We denote by $\grS$ the set of spherical roots of $X$, and by $\grD$ its set of colors.

We keep the notation introduced in the previous section. By its very definition, $X$ is the wonderful compactification of $K/K_{\pi(e)}$. Thus, by the theory of spherical embeddings, $X$ is endowed with a $K$-equivariant morphism $\phi_i : X \ra \mP(\gop_i)$, for all $i=1, \ldots, M$. For all such $i$, let $D_{\gop_i} \in \mN\grD$ be the unique $B$-stable divisor such that $\calL_{D_{\gop_i}} = \phi_i^* \calO(1)_{\mP(\gop_i)}$. It follows that $\gop_i^* \simeq V_{D_{\gop_i}}$ is naturally identified with the submodule of $\grG(X,\calL_{D_{\gop_i}})$ generated by the canonical section of $D_{\gop_i}$, which is a highest weight section in
$$
		\grG(X,\calL_{D_{\gop_i}}) = \bigoplus_{D \leq_\grS D_{\gop_i}} s^{D_{\gop_i} - D} \, V_D  
$$

Since $\ol{Ke}$ is the multicone over $\ol{K\pi(e)}$, by \cite[Theorem 1.2]{BG} it follows that $\ol{Ke}$ is normal if and only if $D_{\gop_i}$ is a minuscule element in $\mN \grD$ for all $i=1, \ldots, M$. By making use of such criterion, we prove the following theorem.

\begin{theorem}	\label{teo:normal}
Let $(\gog, \gok)$ be a symmetric pair with $\gog$ of exceptional type and let $\calO \subset \gop$ be a spherical nilpotent orbit. Then $\ol \calO$ is not normal if and only if $(\gog,\gok) = (\sfG_2, \sfA_1 \times \sfA_1)$, and the Kostant-Dynkin diagram of $\calO$ is $(1;3)$. 
\end{theorem}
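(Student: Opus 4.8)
The plan is to turn the statement into a finite, case-by-case verification governed by the normality criterion recalled just above: by \cite[Theorem 1.2]{BG}, $\ol\calO = \ol{Ke}$ is normal precisely when each of the elements $D_{\gop_i}\in\mN\grD$ (for $i=1,\ldots,M$) is minuscule, that is, when no nonzero $\grg\in\mN\grS$ can be subtracted from $D_{\gop_i}$ within $\mN\grD$ (so that $D_{\gop_i}$ is minimal for $\leq_\grS$). Since this is a property of the spherical system $(\grS,\grD)$ together with the single datum $D_{\gop_i}$, the first step is to pin down $D_{\gop_i}$ for every orbit $\calO$ of Appendix~\ref{A}. Recall that $D_{\gop_i}$ is characterized by $\calL_{D_{\gop_i}}=\phi_i^*\calO(1)_{\mP(\gop_i)}$, so it is read off from the highest weight of $\gop_i^*\simeq V_{D_{\gop_i}}$ via the dictionary between fundamental weights and colors fixed in Section~\ref{s:1}; these data, and the resulting decomposition of $\grG(X,\calL_{D_{\gop_i}})$, are exactly what is tabulated in Appendix~\ref{B}.

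The key reduction is that minuscularity of $D_{\gop_i}$ involves only $\grS$ and the coefficients of $D_{\gop_i}$ on the colors supported on $\supp_S\grS$. Because parabolic induction and localization do not alter the set of spherical roots nor their expansions in colors (they only enlarge $S$ and $\grD$), I would test the condition on the localization $Z$ at $\supp_S\grS$, case by case, following the same groupings used in Section~\ref{ss:General reductions}. For the symmetric, rank-one and comodel localizations the element $D_{\gop_i}$ is a small combination of colors and one checks directly that no spherical root can be subtracted; for the four basic cases (\textbf A)--(\textbf D) I would use the explicit expansions of the $\grs_j$ in terms of colors computed in Section~\ref{s:2} to verify $D_{\gop_i}-\grs_j\notin\mN\grD$ for every $j$. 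In each of these cases the outcome is that $D_{\gop_i}$ is minuscule, so by the surjectivity of the multiplication (Theorem~\ref{teo: projnorm}) the section ring is generated by the subspaces $V_{D_{\gop_i}}\simeq\gop_i^*$; hence $\mC[\ol{Ke}]=\mC[\wt{Ke}]$ and $\ol\calO$ is normal. Simultaneously one reads off the weight semigroup of $\mC[\wt{Ke}]$ as $\{D\in\mN\grD : D\leq_\grS \sum_i n_i D_{\gop_i}\text{ for some }(n_i)\in\mN^M\}$.

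The one exception must then be isolated and its failure proved. For $(\gog,\gok)=(\sfG_2,\sfA_1\times\sfA_1)$ the pair is not Hermitian, so $N=M=1$ and $\gop$ is the irreducible $\sfA_1\times\sfA_1$-module $V(1)\boxtimes V(3)$ of dimension $2\cdot 4=8$; the orbit with Kostant--Dynkin diagram $(1;3)$ is the positive-rank case whose localization $Z$ is a rank-one wonderful variety with a single spherical root $\grs$. Here I would compute $D_{\gop}$ explicitly and show that the cubic factor $V(3)$ forces a coefficient large enough that $D_{\gop}-\grs\in\mN\grD$; thus $D_{\gop}$ is \emph{not} minuscule. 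Consequently $\grG(X,\calL_{D_{\gop}})$ contains the proper lower summand $V_{D_{\gop}-\grs}$, which gives a lowest-degree element of $\mC[\wt{Ke}]$ not arising from $\gop^*=V_{D_{\gop}}$; hence $\mC[\ol{Ke}]\subsetneq\mC[\wt{Ke}]$ and $\ol\calO$ fails to be normal. It remains to record the weight semigroup in this case as well.

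I expect the main obstacle to be the bookkeeping in the second step: correctly translating the highest weight of each $\gop_i^*$ into the element $D_{\gop_i}\in\mN\grD$ (the matching of fundamental weights with colors, and the additional bookkeeping in the Hermitian cases where $M=2$ and $\gop_1\simeq\gop_2^*$), and then running the minuscularity test uniformly so as to be certain that $(\sfG_2,\sfA_1\times\sfA_1)$ with diagram $(1;3)$ is genuinely the \emph{only} failure rather than merely \emph{a} failure. The rank-one $\sfG_2$ computation itself is short once $D_{\gop}$ has been identified; the delicate point is the exhaustiveness of the negative checks across the entire list.
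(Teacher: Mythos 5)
Your proposal follows essentially the same route as the paper: reduce normality to the minuscularity of each $D_{\gop_i}$ in $\mN\grD$ via \cite[Theorem 1.2]{BG}, verify this case by case on the localizations using the explicit expansions of spherical roots in colors, and exhibit the single failure in Case 12.2. The only step you compress is the identification of $D_\gop$ in the $\sfG_2$ case, where $\omega^{-1}(3\omega+\omega')$ has two candidates ($2D_1+D_2+D_3$ and $3D_1+D_3$, the latter minuscule) and the paper must invoke the birationality of $\phi:X\to\mP(\gop)$ and the non-faithfulness of $3D_1+D_3$ to select the non-minuscule one — but this falls under the bookkeeping you correctly flag as the delicate point.
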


\begin{remark}
When $G$ is exceptional and $\height(e) \leq 3$, then $\ol{Ge}$ is not normal if and only if $G$ is of type $\sfG_2$ and the Kostant-Dynkin diagram of $Ge$ is $(10)$ (see \cite[Table 2]{Pa} for an account on these results). The remaining cases with $G$ exceptional and $Ke$ spherical only occur for $G$ of type $\sfE_6$, $\sfE_7$, $\sfF_4$, and specifically for the nilpotent orbits of Cases 3.6, 3.7, 3.8, 3.9, 7.10, 7.11, 7.12, 11.2. When $G$ is of type $\sfE_6$ or $\sfF_4$, the normal nilpotent varieties have been classified respectively by Sommers \cite{sommers} and by Broer \cite{broer}. In particular, we have that $\ol{Ge}$ is normal in Case 3.6, and not normal in Cases 3.7, 3.8, 3.9, 11.2. When $G$ is of type $\sfE_7$ (and $\sfE_8$) the classification of the normal nilpotent varieties is still not complete in the literature, but it seems that the $G$-orbit closures of Cases 7.10, 7.11, 7.12 are {\em expected} to be normal (see \cite[\S 1.9.3]{FJLS} and \cite[Section 7.8]{broer2}).
\end{remark}

For the coordinate ring of the normalization $\wt{Ke}$, by \cite[Theorem 1.3]{BG} we have the following description:
$$
	\mC[\wt{Ke}] = \bigoplus_{n_1,\ldots,n_M\geq0} \grG(X,\calL^{n_1}_{D_{\gop_1}}\otimes\ldots\otimes\calL^{n_M}_{D_{\gop_M}}).
$$
Denote $\grD_\gop(e) = \{D_{\gop_1},\ldots, D_{\gop_M}\}$. It follows by the previous description that, to compute $\grG(\wt{Ke})$, the weight semigroup of $\wt{Ke}$, is enough to compute the semigroup 
$$
	\grG_{\grD_\gop(e)} = \{(n_1,\ldots,n_M,D)\in\mN^M\times\mN \grD  \; : \; D \leq_\grS n_1 D_{\gop_1} + \ldots + n_M D_{\gop_M}\}.
$$
Indeed, by definition $\grG(\wt{Ke})$ consists of the weights
$$
	 n_1\lambda_1^*+\ldots+n_M\lambda_M^*-(n_1D_{\gop_1}+\ldots+n_MD_{\gop_M}-D)
$$
for $n_1,\ldots,n_M\geq0$ and $D \leq_\grS n_1 D_{\gop_1} + \ldots + n_M D_{\gop_M}$, where $\lambda_1^*,\ldots,\lambda_M^*$ are the highest weights of the $K$-modules $\gop_1^*,\ldots,\gop_M^*$.

Suppose that $(G,K)$ is not of Hermitian type. As already recalled, in this case $\gop$ is an irreducible $K$-module, thus $\grD_\gop(e)$ consists of a unique element, that we denote by $D_\gop$. Moreover, since $K$ is semisimple and $Ke$ is spherical, in this case is enough to compute the semigroup
$$
	\grG_{D_\gop} = \{D\in\mN \grD  \; : \; D \leq_\grS nD_{\gop}\text{ for some }n\in\mN\}.
$$
Indeed, $\grG(\wt{Ke})$ is the image of $\grG_{D_\gop}$ via the canonical homomorphism $\gro\colon \mZ \grD \ra \calX(B)$ induced by the restriction of line bundles to the closed orbit of $X$ (see \cite[Section 2]{BCG} for the combinatorial description of $\omega$).

Suppose now that $(G,K)$ is of Hermitian type. Then $K$ is the Levi subgroup of a parabolic subgroup of $G$ with Abelian unipotent radical, thus $K$ is a maximal Levi subgroup of $G$ and the identity component $Z_K$ of its center is one dimensional. By \cite[Proposition 4.1]{BG}, $Z_K$ acts on $\gop_1$ and $\gop_2$ via nontrivial opposite characters in $\calX(Z_K) \simeq \mZ$, which can be described as follows.

Let us fix $T\subset B\subset G$ a maximal torus and a Borel subgroup of $G$, and $Q$ a standard parabolic subgroup of $G$ with Abelian unipotent radical, such that $K$ is the standard Levi subgroup of $Q$. Let us fix $K\cap B$ as Borel subgroup of $K$. We denote by $\gop_+$ the nilpotent radical of $\mathrm{Lie}\,Q$ and by $\gop_-$ the nilpotent radical of $\mathrm{Lie}\,Q_-$, so that $\gop=\gop_+\oplus\gop_-$. Let $S = \{\gra_1, \ldots, \gra_n\}$ be the set of simple roots of $G$ and denote by $\theta_G$ the highest root of $G$. Then $\theta_G$ is the highest weight of the simple $K$-module $\gop_+$.

Let $\chi \in \calX(Z_K)$  be the character given by the action on $\gop_+$. To describe $\chi$, recall that a standard parabolic subgroup of $G$ has Abelian unipotent radical if and only if it is maximal, and the corresponding simple root $\gra_p$ has coefficient 1 in $\theta_G$. In particular, when $G$ is a simple group of exceptional type, we have the following possibilities:
\begin{itemize}
	\item[(1)] If $G$ is of type $\sfE_6$: $\gra_1, \gra_6$.
	\item[(2)] If $G$ is of type $\sfE_7$: $\gra_7$.
\end{itemize}
Let $\got_G \subset \gog$ be the Cartan subalgebra generated by the fundamental coweights $\gro_1^\vee, \ldots, \gro_n^\vee$, and let $\got_K^\mss \subset \got_G$ be the subalgebra generated by the simple coroots of $K$. Take $K$ to be the maximal standard Levi subgroup of $G$ corresponding to $\gra_p$, then the Lie algebra $\goz_K = \Lie Z_K$ is generated by the fundamental coweight $\gro_p^\vee$.

Assuming $G$ to be simply connected, we have that $\calX(T)^\vee = \mZ S^\vee$, therefore
$$\calX(Z_K)^\vee = \goz_K \cap \mZ S^\vee
$$ 
is generated by $m \gro_p^\vee$, where $m \in \mathbb N$ is the minimum such that $m \gro_p^\vee \in \mZ S^\vee$. If $z(\xi)$ is the 1-parameter-subgroup of $Z_K$ corresponding to $m\omega_p^\vee$, we have
$$
	\chi(z(\xi)) = \xi^{m\theta_G(\omega_p^\vee)} = \xi^m.
$$
In our cases, we have the following possibilities for the value of $m$, depending on the pair $(G,\gra_p)$.
\begin{itemize}
	\item[-] $(\sfE_6, \gra_1)$, $(\sfE_6, \gra_6)$: $m=3$.
	\item[-] $(\sfE_7, \gra_7)$: $m = 2$.
\end{itemize}

In the following we compute the weight semigroup of $\wt{Ke}$. We omit the cases when $X$ has rank smaller than three, or is obtained by parabolic induction from a symmetric variety: in all these cases the computations are fairly easy. At last, despite it is of rank one, we give some details for the unique nonnormal case, which is in type $\mathsf G_2$.

\subsection{Cases 1.3, 2.4, 5.5, 6.4, 8.3, 9.4, 10.4} We consider the case 1.3, the others are treated essentially in the same way: indeed, apart from colors which take nonpositive values against every spherical root, they all have the Cartan pairing described in \S\ref{ss:CaseA}.

Keeping a similar notation, we have $\grS = \{\grs_1, \grs_2, \grs_3\}$ and $\grD = \{D_1, \ldots, D_5\}$, where we denote $\grs_1 = 2\gra_2$, $\grs_2 = 2\gra_3$, $\grs_3 = \gra_4$, and $D_1 = D_{\gra_2}$, $D_2 = D_{\gra_3}$, $D_3 = D_{\gra_4}^+$, $D_4 = D_{\gra_4}^-$, $D_5 = D_{\gra_1}$. In particular, the equations relating spherical roots and colors are all the same as in \S\ref{ss:CaseA}, but the first one which reads
$$
	\grs_1 = 2D_1 - D_2 -2D_5.
$$

Notice that $D_\gop = D_3$ is minuscule in $\mN\grD$, in particular $\ol{Ke}$ is normal. On the other hand we have
\begin{align*}
&	D_1 = 2D_3- (\grs_2 + 2\grs_3)\\
& D_4+2D_5 = 3D_3 - (\grs_1+2\grs_2 + 3\grs_3)\\
& D_2+ 2D_5 = 4D_3 - (\grs_1+2\grs_2 + 4\grs_3)
\end{align*}
On the other hand if $D \in \grG_{D_3}$ and $D = \sum_{i=1}^5 c_i D_i$, then $c_5 = 2(c_2+c_4)$. Therefore $\grG_{D_3}$ is freely generated by $D_3$, $D_1$, $D_4+2D_5$, $D_2+2D_5$.

\subsection{Cases 2.5, 6.5, 9.5, 10.5} We consider the case 2.5, the others are treated essentially in the same way since they have the same Cartan pairing, apart from possible colors taking nonpositive values against every spherical root. We have in this case $\grS = \{\grs_1, \grs_2, \grs_3\}$ and $\grD = \{D_1, \ldots, D_6\}$, where we denote $\grs_1 = \gra'$, $\grs_2 = \gra_3$, $\grs_3 = \gra_2+\gra_4$, and $D_1 = D_{\gra_3}^+$, $D_2 = D_{\gra_3}^-$, $D_3 = D_{\gra_2}$, $D_4 = D_{\gra'}^-$, $D_5 = D_{\gra_5}$, $D_6 = D_{\gra_1}$. Then colors and spherical roots are related by the following equations:
\begin{align*}
&	\grs_1 = D_1 +D_2 -D_3 \\
& \grs_2 = D_1 -D_2 +D_3 -D_4 \\
& \grs_3 = -2D_3 +2D_4 -D_5 -D_6
\end{align*}

Notice that $D_\gop = D_1$ is minuscule in $\mN\grD$, in particular $\ol{Ke}$ is normal. Moreover
\begin{align*}
&	D_4 = 2D_1- (\grs_1 + \grs_2)\\
& 2D_2 +D_5 +D_6 = 2D_1 - (2\grs_2 + \grs_3)\\
& D_2 +D_3 +D_5 +D_6 = 3D_1 - (\grs_1+2\grs_2 + \grs_3)\\
& 2D_3 +D_5 +D_6 = 4D_1 - (2\grs_1+2\grs_2 + \grs_3)\\
\end{align*}
On the other hand, if $D \in \grG_{D_1}$ and $D = \sum_{i=1}^6 c_i D_i$, then $c_5 = c_6$ and $c_2+c_3 = 2c_5$, therefore $\grG_{D_1}$ is generated by $D_1$, $D_4$, $2D_2+D_5+D_6$, $D_2+D_3+D_5+D_6$, $2D_3+D_5+D_6$.

\subsection{Case 3.9}
Notice that this case is obtained by parabolic induction from the wonderful variety considered in \S\ref{ss:CaseB}. We have in this case $\grS = \{\grs_1, \grs_2, \grs_3\}$ and $\grD = \{D_1, \ldots, D_4\}$, where we denote $\grs_1 = \gra_2 + \gra_3 + \gra_4$, $\grs_2 = \gra_2 + \gra_3 + \gra_5$, $\grs_3 = \gra_3 + \gra_4 + \gra_5$, and $D_1 = D_{\gra_5}$, $D_2 = D_{\gra_4}$, $D_3 = D_{\gra_2}$, $D_4 = D_{\gra_1}$. In particular, the equations relating spherical roots and colors of \S\ref{ss:CaseB} become
\begin{align*}
&	\grs_1 = -D_1 + D_2 +D_3 -D_4,\\
&	\grs_2 =  D_1 - D_2 + D_3 -D_4,\\
&	\grs_3 =  D_1 + D_2 - D_3.
\end{align*}
Notice that $\grD_\gop(e) = \{D_1,D_2\}$, and that both $D_1$ and $D_2$ are minuscule in $\mN\grD$. In particular $\ol{Ke}$ is normal. Following \cite[Remark 4.7]{BG}, to compute the semigroup $\grG_{\grD_\gop(e)}$ it is enough to compute the semigroup
$$
	\grG^\grS_{\grD_\gop(e)} = \{ \grg \in \mN\grS \; : \; \supp(\grg^+) \subset \{D_1, D_2\}\}.
$$
Let $\grg \in \mN\grS$ and write $\grg = a_1\grs_1 + a_2 \grs_2 + a_3 \grs_3$, then $\grg \in \grG^\grS_{\grD_\gop(e)}$ if and only if $a_1+a_2 \leq a_3$. Therefore $\grG^\grS_{\grD_\gop(e)}$ is generated by the elements
$$
 D_1 + D_2 -D_3 = \grs_3, \;  2D_1 -D_4 = \grs_2 + \grs_3, \; 2D_2 -D_4 = \grs_1 + \grs_3.
$$

\subsection{Cases 5.8, 5.9} 
We treat the case 5.8, the other one is identical. Notice that this case is obtained by parabolic induction from the comodel wonderful variety of cotype $\sfE_7$: in particular the combinatorics of colors and spherical roots is essentially the same as that of a model wonderful variety. We have in this case $\grS = \{\grs_1, \ldots, \grs_6\}$ and $\grD = \{D_1, \ldots, D_8\}$, where we enumerate the spherical roots as $\grs_1 = \gra_4$, $\grs_2 = \gra_3$, $\grs_3 = \gra_1$, $\grs_4 = \gra_5$,  $\grs_5 = \gra_2$, $\grs_6 = \gra_6$, and the colors as $D_1 = D_{\gra_4}^+$, $D_2 = D_{\gra_3}^-$, $D_3 = D_{\gra_4}^-$, $D_4 = D_{\gra_1}^+$, $D_5 = D_{\gra_5}^-$, $D_6 = D_{\gra_6}^+$, $D_7 = D_{\gra_6}^-$, $D_8 = D_{\gra_7}$. Then we have the following relations between colors and spherical roots:
\begin{align*}
&	\grs_1 = D_1 +D_3 -D_4 \\
& \grs_2 = D_2 -D_3 +D_4 -D_5 \\
& \grs_3 = -D_1 -D_2 +D_3 +D_4 -D_5 \\
& \grs_4 = -D_2 -D_3 +D_4 +D_5 -D_6 \\
& \grs_5 = -D_4 +D_5 +D_6 -D_7 \\
& \grs_6 = -D_5 +D_6 +D_7 -D_8
\end{align*}

Notice that $D_\gop = D_1$, which is a minuscule element in $\mN\grD$. Therefore $\ol{Ke}$ is normal. Let $D \in \grG_{D_1}$, and write $D = nD_1 - (\sum_{i=1}^6 a_i \grs_i) = \sum_{i=1}^8 c_i D_i$. Then we have the following relations: $c_8 = c_2+c_3+c_4+c_5$ and $c_2+c_5+c_7 = 2a_3$. On the other hand, notice that
\begin{align*}
&	D_6 = 2D_1- (2\grs_1 + \grs_2 + \grs_4)\\
& 2D_7 = 3D_1 - (4\grs_1 +3\grs_2 + \grs_3 + 2\grs_4 +2\grs_5)\\
& D_3 + D_8 = 3D_1 - (3\grs_1 +2\grs_2 + 2\grs_4 + \grs_5 +\grs_6)\\
& D_4 +D_8 = 4D_1 - (4\grs_1 +2\grs_2 + 2\grs_4 + \grs_5 +\grs_6)\\
& D_2+D_7 +D_8 = 4D_1 - (5\grs_1 +3\grs_2 + \grs_3 +3\grs_4 + 2\grs_5 +\grs_6)\\
& D_5+D_7 +D_8 = 5D_1 - (6\grs_1 +4\grs_2 + \grs_3 +3\grs_4 + 2\grs_5 +\grs_6)\\
& 2D_2 + 2D_8 = 5D_1 - (6\grs_1 +3\grs_2 + \grs_3 +4\grs_4 + 2\grs_5 +2\grs_6)\\
& D_2 + D_5 +2D_8 = 6D_1 - (7\grs_1 +4\grs_2 + \grs_3 +4\grs_4 + 2\grs_5 +2\grs_6)\\
& 2D_5 +2D_8 = 7D_1 - (8\grs_1 +5\grs_2 + \grs_3 +4\grs_4 + 2\grs_5 +2\grs_6)
\end{align*}
Therefore $\grG_{D_1}$ is generated by the elements in the previous list, together with $D_1$.

\subsection{Cases 7.11, 7.12} 
We treat the case 7.12, the other one is identical. This case was already considered in \S\ref{ss:CaseC}, we keep the notation introduced therein. Notice that $\grD_\gop(e) = \{D_1,D_4\}$, and that both $D_1$ and $D_4$ are minuscule in $\mN\grD$. In particular $\ol{Ke}$ is normal. Following \cite[Remark 4.7]{BG}, to compute the semigroup $\grG_{\grD_\gop(e)}$, we only have to compute the semigroup
$$
	\grG^\grS_{\grD_\gop(e)} = \{ \grg \in \mN\grS \; : \; \supp(\grg^+) \subset \{D_1, D_2\}\}.
$$
Let $\grg \in \mN\grS$ and write $\grg = a_1\grs_1 + a_2 \grs_2 + a_3 \grs_3$, then $\grg \in \grG^\grS_{\grD_\gop(e)}$ if and only if 
$$\max\{a_2,a_3\} \leq a_1 \leq a_2+a_3.$$
Therefore $\grG^\grS_{\grD_\gop(e)}$ is generated by the elements
$$
 2D_1 - D_5 = \grs_1+\grs_2, \quad  D_1 -D_2 +D_4 = \grs_1+ \grs_3, \quad 2D_1 -D_3+D_4 = \grs_1 + \grs_2 + \grs_3.
$$

\subsection{Case 8.6} Notice that this case is a parabolic induction of the comodel wonderful variety of cotype $\sfE_8$, which was treated in \cite[Section 8]{BGM}.
We have in this case $\grS = \{\grs_1, \ldots, \grs_7\}$ and $\grD = \{D_1, \ldots, D_9\}$, where we label the spherical roots as $\grs_1 = \gra_4$, $\grs_2 = \gra_5$, $\grs_3 = \gra_8$, $\grs_4 = \gra_3$,  $\grs_5 = \gra_6$, $\grs_6 = \gra_2$, $\grs_7 = \gra_7$, and the colors as $D_1 = D_{\gra_4}^+$, $D_2 = D_{\gra_5}^-$, $D_3 = D_{\gra_4}^-$, $D_4 = D_{\gra_3}^+$, $D_5 = D_{\gra_3}^-$, $D_6 = D_{\gra_6}^+$, $D_7 = D_{\gra_7}^-$, $D_8 = D_{\gra_7}^+$, $D_9 = D_{\gra_1}$. Then we have the following relations between colors and spherical roots:
\begin{align*}
&	\grs_1 = D_1 +D_3 -D_4 \\
& \grs_2 = D_2 -D_3 +D_4 -D_5 \\
& \grs_3 = -D_1 -D_2 +D_3 +D_4 -D_5 \\
& \grs_4 = -D_2 -D_3 +D_4 +D_5 -D_6 \\
& \grs_5 = -D_4 +D_5 +D_6 -D_7 \\
& \grs_6 = -D_5 +D_6 +D_7 -D_8 -D_9 \\
& \grs_7 = -D_6 +D_7 +D_8
\end{align*}

Notice that $D_\gop = D_8$, which is a minuscule element in $\mN\grD$. Therefore $\ol{Ke}$ is normal. Let $D \in \grG_{D_8}$, and write $D =  \sum_{i=1}^9 c_i D_i$. Then we have the equality $c_9 = c_2+c_3+c_4+c_5$. On the other hand, notice that
\begin{align*}
&D_1 = 2D_8 - ( \sigma_2 + \sigma_3 + 2\sigma_5 + 2\sigma_7)\\
&D_7 = 3D_8 - ( 2\sigma_1 + 3\sigma_2 + 2\sigma_3 + \sigma_4 + 4\sigma_5 + 3\sigma_7)\\
&D_6 = 4D_8 - ( 2\sigma_1 + 3\sigma_2 + 2\sigma_3 + \sigma_4 + 4\sigma_5 + 4\sigma_7)\\
&D_2 +D_9 = 4D_8 - ( 3\sigma_1 + 4\sigma_2 + 3\sigma_3 + 2\sigma_4 + 6\sigma_5 + \sigma_6 + 5\sigma_7) \\
&D_3 +D_9 = 5D_8 - ( 3\sigma_1 + 5\sigma_2 + 3\sigma_3 + 2\sigma_4 + 7\sigma_5 + \sigma_6 + 6\sigma_7)\\
&D_5 +D_9 = 6D_8 - ( 4\sigma_1 + 6\sigma_2 + 4\sigma_3 + 2\sigma_4 + 8\sigma_5 + \sigma_6 + 7\sigma_7)\\
&D_4 +D_9 = 7D_8 - ( 4\sigma_1 + 6\sigma_2 + 4\sigma_3 + 2\sigma_4 + 9\sigma_5 + \sigma_6 + 8\sigma_7)
\end{align*}
Therefore $\grG_{D_1}$ is generated by the elements in the previous list, together with $D_8$.

\subsection{Case 12.2} In this case, we have $\grS = \{\gra\}$ and $\grD = \{D_1, D_2, D_3\}$, where we enumerate the colors as $D_1=D_\alpha^+$, $D_2=D_\alpha^-$, $D_3=D_{\alpha'}$. There is a unique relation relating colors and spherical roots, namely
\[\alpha = D_1+D_2.\]
Recall that $\gop=V(3\omega+\omega')$, thus we must have $\omega(D_\gop)=3\omega+\omega'$. It follows that, up to an equivariant automorphism of $X$, we have either $D_\gop=2D_1+D_2+D_3$, or $D_\gop = 3D_1+D_3$. On the other hand by construction the equivariant morphism $\phi : X \lra \mP(\gop)$ defined at the beginning of the section is birational, thus by \cite[\S 2.4.3]{BL} it follows that $D_\gop=2D_1+D_2+D_3$: indeed, $\{D_2\}$ is a distinguished subset, therefore $3D_1+D_3$ is not a faithful divisor and the corresponding morphism $X \lra \mP(V_{3D_1+D_3})$ is not birational.

Notice that $D_\gop$ is not minuscule, since $D_\gop-\gra =D_1+D_3$. Therefore, $\ol{Ke}$ is not normal.  The semigroup $\Gamma_{D_\gop}$ is generated by $2D_1+D_2+D_3$ and $D_1+D_3$.


\appendix

\section{List of spherical nilpotent $K$-orbits  in $\mathfrak p$\\in the exceptional cases}\label{A}

\renewcommand{\thesubsection}{\arabic{subsection}}

Here we enumerate the exceptional symmetric pairs $(\mathfrak g,\mathfrak k)$ from 1 to 12, the Cartan notations for the corresponding real forms of $\mathfrak g$ (type of $\mathfrak g$ followed by $\dim\mathfrak p-\dim\mathfrak k$) and the corresponding symmetric spaces (type of $\mathfrak g$ followed by a Roman numeral) are recalled in the tables of Appendix \ref{B}.   

We label the nilpotent $K$-orbits in $\mathfrak p$ by two numbers $n.m$, where $n$ denotes the symmetric pair and $m$ the number of the orbit in \makebox[0pt]{\rule{3pt}{0pt}\rule[4pt]{3pt}{0.8pt}}Dokovi\'c's tables \cite{D88a,D88b}. We also recall here the Kostant-Dynkin diagram of the orbit.

For every simple exceptional Lie algebra $\mathfrak g$ we fix a Chevalley basis 
\[\{h_\alpha\,:\,\alpha\in S\}\cup\{x_\alpha\,:\,\alpha\in R^+\}\cup\{y_\alpha\,:\,\alpha\in R^+\},\] 
where $S\subset R^+\subset R$ are respectively the set of the simple roots, the set of the positive roots and the set of the roots of $\mathfrak g$. In particular, $\{x_\alpha,h_\alpha,y_\alpha\}$ is an $\mathfrak{sl}(2)$ triple for all $\gra \in R^+$, and $h_\alpha = [x_\alpha,y_\alpha]$. We denote the elements of the irreducible exceptional root systems as in the tables of \cite{bourbaki}. 

For every exceptional symmetric pair $(\mathfrak g,\mathfrak k)$ we fix a Cartan subalgebra of $\mathfrak k$ included in the Cartan subalgebra of $\mathfrak g$, we fix a set of simple root vectors for $\mathfrak k$, and we give the highest weight vector(s) of $\mathfrak p$. When $\mathfrak k$ is of classical type, we also fix a basis of the standard representation to describe more explicitly $\mathfrak k$ and its representation on $\mathfrak p$.

For every $K$-orbit, we will provide a normal triple $\{e,h,f\}$. We computed these data with the help of SLA, a GAP package (see \cite{dG}), however in some cases we give here triples which are different from those obtained with SLA, since they are slightly more convenient for our computations. When $\mathfrak k$ is of classical type, the representative $e$ of the orbit is also described in classical terms.

One can then compute the centralizer $K_e$. Since it is enough for our purposes, to simplify the list we only give here its Lie algebra $\mathfrak k_e$.

Let $\mathfrak q=\mathfrak l +\mathfrak n$ be the parabolic subalgebra of $\mathfrak k$ given by
\[\mathfrak l=\mathfrak k(0),\quad \mathfrak n=\bigoplus_{i>0}\mathfrak k(i),\]
where $\mathfrak k(i)$ is the $\mathrm{ad}(h)$-eigenspace of eigenvalue $i$. It can easily be read off from the Kostant-Dynkin diagram of the orbit. We give $\mathfrak k_e$ as $\mathfrak l_e + \mathfrak m$, where $\mathfrak l_e$ the centralizer of $e$ in $\mathfrak l$, a Levi subalgebra, and $\mathfrak m$ is the nilpotent radical that we describe as a subalgebra of $\mathfrak n$.

\subsection{$\mathsf E_6/\mathsf C_4$}

We have $\mathfrak k=\mathfrak{sp}(8)$, $\mathfrak p=V(\omega_4)$.
Take $\theta$ such that $\alpha_2^\vee,\alpha_4^\vee,\alpha_3^\vee+\alpha_5^\vee,\alpha_1^\vee+\alpha_6^\vee$ form a basis of $\mathfrak t^\theta$ and take the following root vectors for the simple roots of $\mathfrak k$:
\[y_{\esix{1}{1}{1}{2}{1}{0}}-y_{\esix{0}{1}{1}{2}{1}{1}},\ 
-x_{\esix{1}{0}{1}{1}{0}{0}}+x_{\esix{0}{0}{0}{1}{1}{1}},\
y_{\esix{1}{0}{1}{1}{1}{0}}+y_{\esix{0}{0}{1}{1}{1}{1}},\
x_{\esix{1}{1}{2}{2}{2}{1}}.\]
Then $x_{\esix{1}{1}{1}{1}{1}{1}}$ is a highest weight vector in $\mathfrak p$.

Fix a basis $e_1,e_2,e_3,e_4,e_{-4},e_{-3},e_{-2},e_{-1}$ of $\mathbb C^8$, a skew-symmetric bilinear form $\omega$ such that $\omega(e_i,e_j)=\delta_{i,-j}$ and $\mathfrak k=\mathfrak{sp}(\mathbb C^8,\omega)$. We can identify $x_{\esix{1}{1}{1}{1}{1}{1}}=e_1\wedge e_2\wedge e_3\wedge e_4$ in $\mathfrak p\subset\mathsf\Lambda^4\mathbb C^8$. 

\subsubsection*{1.1. $(0001)$}

\[\left\{x_{\esix{1}{1}{1}{1}{1}{1}},\ h_{\esix{1}{1}{1}{1}{1}{1}},\ y_{\esix{1}{1}{1}{1}{1}{1}}\right\}\]
We can take $e=e_1\wedge e_2\wedge e_3\wedge e_4$. We have $\mathfrak l=\mathfrak k_h\cong\mathfrak{gl}(4)$ 
and 
$\mathfrak k_e=\mathfrak l_e+\mathfrak n$,
where
$\mathfrak l_e\cong\mathfrak{sl}(4)$.

\subsubsection*{1.2. $(0100)$}
 
\[\left\{x_{\esix{1}{0}{0}{0}{0}{0}}+x_{\esix{0}{0}{0}{0}{0}{1}},\ \alpha_1^\vee+\alpha_6^\vee,\ y_{\esix{1}{0}{0}{0}{0}{0}}+y_{\esix{0}{0}{0}{0}{0}{1}}\right\}\]
We can take $e=e_1\wedge e_2\wedge e_3\wedge e_{-3}-e_1\wedge e_2\wedge e_4\wedge e_{-4}$, and we have $\mathfrak l=\mathfrak k_h\cong\mathfrak{gl}(2)\oplus\mathfrak{sp}(4)$ 
and 
$\mathfrak k_e=\mathfrak l_e+\mathfrak n$. 
We have
$\mathfrak l_e\cong\mathfrak{sl}(2)\oplus\mathfrak{sl}(2)\oplus\mathfrak{sl}(2)$, one summand contained in $\mathfrak{gl}(2)$, the other ones in $\mathfrak{sp}(4)$.

\subsubsection*{1.3. $(1001)$}
 
\[\left\{x_{\esix{1}{0}{1}{1}{1}{1}}-y_{\esix{0}{0}{1}{1}{0}{0}}+y_{\esix{0}{0}{0}{1}{1}{0}},\ \alpha_1^\vee-\alpha_4^\vee+\alpha_6^\vee,\ y_{\esix{1}{0}{1}{1}{1}{1}}-x_{\esix{0}{0}{1}{1}{0}{0}}+x_{\esix{0}{0}{0}{1}{1}{0}}\right\}\]
We can take $e=e_1\wedge e_2\wedge e_3\wedge e_{-2}-e_1\wedge e_2\wedge e_4\wedge e_{-3}+e_1\wedge e_3\wedge e_4\wedge e_{-4}$. We have $\mathfrak l=\mathfrak k_h\cong\mathfrak{gl}(1)\oplus\mathfrak{gl}(3)$ 
and 
$\mathfrak k_e=\mathfrak l_e+\mathfrak m$, 
where
$\mathfrak l_e\cong\mathfrak{gl}(1)\oplus\mathfrak{so}(3)$, the $\mathfrak{so}(3)$ summand being contained in $\mathfrak{gl}(3)$, and $\mathfrak m$ is the $\mathfrak l_e$-complement in $\mathfrak n$ of the unique $1$-dimensional $\mathfrak l_e$-submodule of $\mathfrak k(1)$.

\subsection{$\mathsf E_6/\mathsf A_5 \times \mathsf A_1$}

We have $\mathfrak k=\mathfrak{sl}(6)\oplus\mathfrak{sl}(2)$, $\mathfrak p=V(\omega_3+\omega')$.
Take $\theta$ such that $\mathfrak t^\theta=\mathfrak t$ and take the following root vectors for the simple roots of $\mathfrak k$:
\[x_{\esix{1}{0}{0}{0}{0}{0}},\ x_{\esix{0}{0}{1}{0}{0}{0}},\ 
x_{\esix{0}{0}{0}{1}{0}{0}},\ x_{\esix{0}{0}{0}{0}{1}{0}},\
x_{\esix{0}{0}{0}{0}{0}{1}},\
x_{\esix{1}{2}{2}{3}{2}{1}}.\]
Then $x_{\esix{1}{1}{2}{3}{2}{1}}$ is a highest weight vector in $\mathfrak p$.

Fix a basis $e_1,\ldots,e_6$ of $V$, a basis $e'_1,e'_2$ of $W$, and $\mathfrak k=\mathfrak{sl}(V)\oplus\mathfrak{sl}(W)$. We can identify $x_{\esix{1}{1}{2}{3}{2}{1}}=e_1\wedge e_2\wedge e_3\otimes e'_1$ in $\mathfrak p=\left(\mathsf\Lambda^3V\right)\otimes W$.

\subsubsection*{2.1. $(00100;\ 1)$}
 
\[\left\{x_{\esix{1}{1}{2}{3}{2}{1}},\ h_{\esix{1}{1}{2}{3}{2}{1}},\ y_{\esix{1}{1}{2}{3}{2}{1}}\right\}\]
We can take $e=e_1\wedge e_2\wedge e_3\otimes e'_1$. We have $\mathfrak l=\mathfrak k_h\cong\mathfrak{s}(\mathfrak{gl}(3)\oplus\mathfrak{gl}(3))\oplus\mathfrak{gl}(1)$ 
and 
$\mathfrak k_e=\mathfrak l_e+\mathfrak n$,
where
$\mathfrak l_e\cong\mathfrak{gl}(1)\oplus\mathfrak{sl}(3)\oplus\mathfrak{sl}(3)$.

\subsubsection*{2.2. $(10001;\ 2)$}
 
\[\left\{x_{\esix{1}{1}{2}{2}{1}{1}}+x_{\esix{1}{1}{1}{2}{2}{1}},\ h_{\esix{1}{1}{2}{2}{1}{1}}+h_{\esix{1}{1}{1}{2}{2}{1}},\ y_{\esix{1}{1}{2}{2}{1}{1}}+y_{\esix{1}{1}{1}{2}{2}{1}}\right\}\]
We can take $e=(e_1\wedge e_2\wedge e_5+e_1\wedge e_3\wedge e_4)\otimes e'_1$. We have $\mathfrak l=\mathfrak k_h\cong\mathfrak{s}(\mathfrak{gl}(1)\oplus\mathfrak{gl}(4)\oplus\mathfrak{gl}(1))\oplus\mathfrak{gl}(1)$ 
and 
$\mathfrak k_e=\mathfrak l_e+\mathfrak n$,
where
$\mathfrak l_e\cong\mathfrak{gl}(1)^{\oplus2}\oplus\mathfrak{sp}(4)$.

\subsubsection*{2.3. $(01010;\ 0)$}
 
\[\left\{x_{\esix{1}{1}{2}{2}{2}{1}}+y_{\esix{0}{1}{0}{0}{0}{0}},\ h_{\esix{1}{1}{2}{2}{2}{1}}-h_{\esix{0}{1}{0}{0}{0}{0}},\ y_{\esix{1}{1}{2}{2}{2}{1}}+x_{\esix{0}{1}{0}{0}{0}{0}}\right\}\]
We can take $e=e_1\wedge e_2\wedge e_3\otimes e'_2-e_1\wedge e_2\wedge e_4\otimes e'_1$. We have $\mathfrak l=\mathfrak k_h\cong\mathfrak{s}(\mathfrak{gl}(2)\oplus\mathfrak{gl}(2)\oplus\mathfrak{gl}(2))\oplus\mathfrak{sl}(2)$ 
and 
$\mathfrak k_e=\mathfrak l_e+\mathfrak n$,
where
$\mathfrak l_e\cong\mathfrak{gl}(1)\oplus\mathfrak{sl}(2)\oplus\mathfrak{sl}(2)\oplus\mathfrak{sl}(2)$, with $\mathfrak{sl}(2)^{\oplus3}$ embedded diagonally into $\mathfrak{sl}(2)^{\oplus4}$ via $(A,B,C)\longmapsto (A,B,C,B)$.

\subsubsection*{2.4. $(00100;\ 3)$}
 
\[\begin{array}{rcl}
\bigg\{ & x_{\esix{1}{1}{2}{2}{1}{0}}+x_{\esix{1}{1}{1}{2}{1}{1}}+x_{\esix{0}{1}{1}{2}{2}{1}},\qquad h_{\esix{1}{1}{2}{2}{1}{0}}+h_{\esix{1}{1}{1}{2}{1}{1}}+h_{\esix{0}{1}{1}{2}{2}{1}},\\
 & y_{\esix{1}{1}{2}{2}{1}{0}}+y_{\esix{1}{1}{1}{2}{1}{1}}+y_{\esix{0}{1}{1}{2}{2}{1}} & \bigg\}
\end{array}\]
We can take $e=(e_1\wedge e_2\wedge e_6-e_1\wedge e_3\wedge e_5+e_2\wedge e_3\wedge e_4)\otimes e'_1$. We have $\mathfrak l=\mathfrak k_h\cong\mathfrak{s}(\mathfrak{gl}(3)\oplus\mathfrak{gl}(3))\oplus\mathfrak{gl}(1)$ 
and 
$\mathfrak k_e=\mathfrak l_e+\mathfrak m$, 
where
$\mathfrak l_e\cong\mathfrak{gl}(1)\oplus\mathfrak{sl}(3)$, the $\mathfrak{sl}(3)$ summand embedded diagonally in $\mathfrak{sl}(3)^{\oplus2}$, and $\mathfrak m$ is the $\mathfrak l_e$-complement in $\mathfrak n$ of the unique $1$-dimensional $\mathfrak l_e$-submodule in $\mathfrak k(1)\cap\mathfrak{sl}(V)$.

\subsubsection*{2.5. $(10101;\ 1)$}
 
\[\begin{array}{rcl}
\bigg\{ & x_{\esix{1}{1}{2}{2}{1}{1}}+x_{\esix{1}{1}{1}{2}{2}{1}}+y_{\esix{0}{1}{0}{0}{0}{0}},\qquad h_{\esix{1}{1}{2}{2}{1}{1}}+h_{\esix{1}{1}{1}{2}{2}{1}}-h_{\esix{0}{1}{0}{0}{0}{0}},\\ 
 & y_{\esix{1}{1}{2}{2}{1}{1}}+y_{\esix{1}{1}{1}{2}{2}{1}}+x_{\esix{0}{1}{0}{0}{0}{0}} & \bigg\}
\end{array}\]
We can take $e=e_1\wedge e_2\wedge e_3\otimes e'_2-(e_1\wedge e_2\wedge e_5-e_1\wedge e_3\wedge e_4)\otimes e'_1$. We have $\mathfrak l=\mathfrak k_h\cong\mathfrak{s}(\mathfrak{gl}(1)\oplus\mathfrak{gl}(2)\oplus\mathfrak{gl}(2)\oplus\mathfrak{gl}(1))\oplus\mathfrak{gl}(1)$ 
and 
$\mathfrak k_e=\mathfrak l_e+\mathfrak m$, 
where
$\mathfrak l_e\cong\mathfrak{gl}(1)^{\oplus2}\oplus\mathfrak{sl}(2)$, the $\mathfrak{sl}(2)$ summand embedded diagonally in $\mathfrak{sl}(2)^{\oplus2}$. The $\mathfrak l_e$-submodule $\mathfrak m\subset\mathfrak n$ can be described as follows. In $\mathfrak k(1)$ there are precisely two $1$-dimensional $\mathfrak l_e$-submodules lying into different isotypic $\mathfrak l$-components, call them $\mathfrak u_1$ and $\mathfrak u_2$: then $\mathfrak m$ is the sum of the $\mathfrak l_e$-complement of $\mathfrak u_1\oplus\mathfrak u_2$ in $\mathfrak n$, plus a one dimensional subspace $\mathfrak u\subset\mathfrak u_1\oplus\mathfrak u_2$ that projects nontrivially on both summands (which are isomorphic as $\mathfrak l_e$-modules).

\subsection{$\mathsf E_6/\mathsf D_5\times \mathbb C^\times$}

We have $\mathfrak k=\mathfrak{so}(10)\oplus\mathfrak{gl}(1)$, $\mathfrak p=V(\omega_4)\oplus V(\omega_5)$ as $\mathfrak k^{\mathrm{ss}}$-module.
Take $\theta$ such that $\mathfrak t^\theta=\mathfrak t$ and take the following root vectors for the simple roots of $\mathfrak k$:
\[x_{\esix{0}{0}{0}{0}{0}{1}},\ x_{\esix{0}{0}{0}{0}{1}{0}},\ 
x_{\esix{0}{0}{0}{1}{0}{0}},\ x_{\esix{0}{0}{1}{0}{0}{0}},\
x_{\esix{0}{1}{0}{0}{0}{0}}.\]
Then $y_{\esix{1}{0}{0}{0}{0}{0}}$ and $x_{\esix{1}{2}{2}{3}{2}{1}}$ are highest weight vectors in $\mathfrak p$.

Fix a basis $e_1,\ldots,e_5$ of $W$ and a dual basis $\varphi_1,\ldots,\varphi_5$ of $W^*$, so that we have a nondegenerate symmetric bilinear form on $V=W\oplus W^*$, and $\mathfrak k=\mathfrak{so}(V)$. The spin representations can be realized in $\mathsf\Lambda W^*$ (as the odd and even degree parts) via the anti-symmetric square 
\[\sigma^2\colon \mathsf\Lambda^2V\otimes\mathsf\Lambda W^*\to\mathsf\Lambda W^*\] 
of the contraction-extension map
\[\sigma\colon V\otimes\mathsf\Lambda W^*\to\mathsf\Lambda W^*\]
and the natural isomorphism between $\mathfrak{so}(V)$ and $\mathsf\Lambda^2V$. We can identify $y_{\esix{1}{0}{0}{0}{0}{0}}=\varphi_5$ and $x_{\esix{1}{2}{2}{3}{2}{1}}=1$ in $\mathfrak p=\mathsf\Lambda W^*$. 

\subsubsection*{3.1. $(00001;\ 0)$}

\[\left\{x_{\esix{1}{2}{2}{3}{2}{1}},\ h_{\esix{1}{2}{2}{3}{2}{1}},\ y_{\esix{1}{2}{2}{3}{2}{1}}\right\}\]
We can take $e=1$. We have $\mathfrak l=\mathfrak k_h\cong\mathfrak{gl}(1)\oplus\mathfrak{gl}(5)$ 
and 
$\mathfrak k_e=\mathfrak l_e+\mathfrak n$, 
where
$\mathfrak l_e\cong\mathfrak{gl}(1)\oplus\mathfrak{sl}(5)$.

\subsubsection*{3.2. $(00010;\ -2)$}

\[\left\{y_{\esix{1}{0}{0}{0}{0}{0}},\ -h_{\esix{1}{0}{0}{0}{0}{0}},\ x_{\esix{1}{0}{0}{0}{0}{0}}\right\}\]
We can take $e=\varphi_5$. This case is equal to the previous one up to an external automorphism of $\mathfrak k$.

\subsubsection*{3.3. $(10000;\ 1)$}

\[\left\{x_{\esix{1}{1}{2}{2}{1}{1}}+x_{\esix{1}{1}{1}{2}{2}{1}},\ h_{\esix{1}{1}{2}{2}{1}{1}}+h_{\esix{1}{1}{1}{2}{2}{1}},\ y_{\esix{1}{1}{2}{2}{1}{1}}+y_{\esix{1}{1}{1}{2}{2}{1}}\right\}\]
We can take $e=\varphi_4\wedge\varphi_3 +\varphi_5\wedge\varphi_2$, and we have $\mathfrak l=\mathfrak k_h\cong\mathfrak{gl}(1)^{\oplus2}\oplus\mathfrak{so}(8)$ 
and 
$\mathfrak k_e=\mathfrak l_e+\mathfrak n$, 
where
$\mathfrak l_e\cong\mathfrak{gl}(1)\oplus\mathfrak{so}(7)$.

\subsubsection*{3.4. $(10000;\ -2)$}

\[\left\{y_{\esix{1}{1}{1}{1}{0}{0}}+y_{\esix{1}{0}{1}{1}{1}{0}},\ -h_{\esix{1}{1}{1}{1}{0}{0}}-h_{\esix{1}{0}{1}{1}{1}{0}},\ x_{\esix{1}{1}{1}{1}{0}{0}}+x_{\esix{1}{0}{1}{1}{1}{0}}\right\}\]
We can take $e=\varphi_5\wedge\varphi_4\wedge\varphi_3 -\varphi_2$. This case is equal to the previous one up to an external automorphism of $\mathfrak k$.

\subsubsection*{3.5. $(00011;\ -2)$}

\[\left\{x_{\esix{1}{2}{2}{3}{2}{1}}+y_{\esix{1}{0}{0}{0}{0}{0}},\ h_{\esix{1}{2}{2}{3}{2}{1}}-h_{\esix{1}{0}{0}{0}{0}{0}},\ y_{\esix{1}{2}{2}{3}{2}{1}}+x_{\esix{1}{0}{0}{0}{0}{0}}\right\}\]
We can take $e=\varphi_5+1$. We have $\mathfrak l=\mathfrak k_h\cong\mathfrak{gl}(1)\oplus\mathfrak{gl}(4)\oplus\mathfrak{gl}(1)$ 
and 
$\mathfrak k_e=\mathfrak l_e+\mathfrak n$, 
where
$\mathfrak l_e\cong\mathfrak{gl}(1)\oplus\mathfrak{sl}(4)$.

\subsubsection*{3.6. $(02000;\ -2)$}

\[\left\{x_{\esix{1}{2}{2}{3}{2}{1}}+y_{\esix{1}{1}{1}{1}{0}{0}},\ 2h_{\esix{1}{2}{2}{3}{2}{1}}-2h_{\esix{1}{1}{1}{1}{0}{0}},\ 2y_{\esix{1}{2}{2}{3}{2}{1}}+2x_{\esix{1}{1}{1}{1}{0}{0}}\right\}\]
We can take $e=\varphi_5\wedge\varphi_4\wedge\varphi_3+1$. We have $\mathfrak l=\mathfrak k_h\cong\mathfrak{gl}(1)\oplus\mathfrak{gl}(2)\oplus\mathfrak{so}(6)$ 
and 
$\mathfrak k_e=\mathfrak l_e+\mathfrak n$, 
where
$\mathfrak l_e\cong\mathfrak{gl}(1)\oplus\mathfrak{sl}(2)\oplus\mathfrak{sl}(3)$.

\subsubsection*{3.7. $(11010;\ -2)$}

\[\begin{array}{rcl}
\bigg\{ & x_{\esix{1}{1}{2}{2}{1}{1}}+x_{\esix{1}{1}{1}{2}{2}{1}}+y_{\esix{1}{0}{0}{0}{0}{0}},\qquad h_{\esix{1}{1}{2}{2}{1}{1}}+2h_{\esix{1}{1}{1}{2}{2}{1}}-2h_{\esix{1}{0}{0}{0}{0}{0}},\\ 
 & y_{\esix{1}{1}{2}{2}{1}{1}}+2y_{\esix{1}{1}{1}{2}{2}{1}}+2x_{\esix{1}{0}{0}{0}{0}{0}} & \big\}
\end{array}\]
We can take $e=\varphi_4\wedge\varphi_3 +\varphi_5\wedge\varphi_2+\varphi_5$. We have $\mathfrak l=\mathfrak k_h\cong\mathfrak{gl}(1)^{\oplus3}\oplus\mathfrak{gl}(3)$ 
and 
$\mathfrak k_e=\mathfrak l_e+\mathfrak m$, 
where
$\mathfrak l_e\cong\mathfrak{gl}(1)\oplus\mathfrak{sl}(3)$. The $\mathfrak l_e$-submodule $\mathfrak m\subset\mathfrak n$ can be described as follows. In $\mathfrak k(1)$ there are precisely two $3$-dimensional simple $\mathfrak l$-submodules, call them $\mathfrak u_1$ and $\mathfrak u_2$, and they are isomorphic as $\mathfrak l_e$-modules: then $\mathfrak m$ is the sum of the $\mathfrak l_e$-complement of $\mathfrak u_1\oplus\mathfrak u_2$ in $\mathfrak n$, plus a simple $\mathfrak l_e$-submodule $\mathfrak u\subset\mathfrak u_1\oplus\mathfrak u_2$ which projects nontrivially on both summands.

\subsubsection*{3.8. $(11001;\ -3)$}

\[\begin{array}{rcl}
\bigg\{ & x_{\esix{1}{2}{2}{3}{2}{1}}+y_{\esix{1}{1}{1}{1}{0}{0}}+y_{\esix{1}{0}{1}{1}{1}{0}},\qquad 2h_{\esix{1}{2}{2}{3}{2}{1}}-2h_{\esix{1}{1}{1}{1}{0}{0}}-h_{\esix{1}{0}{1}{1}{1}{0}},\\ 
 & 2y_{\esix{1}{2}{2}{3}{2}{1}}+2x_{\esix{1}{1}{1}{1}{0}{0}}+x_{\esix{1}{0}{1}{1}{1}{0}} & \bigg\}
\end{array}\]
We can take $e=\varphi_5\wedge\varphi_4\wedge\varphi_3 -\varphi_2+1$. This case is equal to the previous one up to an external automorphism of $\mathfrak k$.

\subsubsection*{3.9. $(40000;\ -2)$}

\[\begin{array}{rcl}
\bigg\{ & x_{\esix{1}{1}{2}{2}{1}{1}}+x_{\esix{1}{1}{1}{2}{2}{1}}+y_{\esix{1}{1}{1}{1}{0}{0}}+y_{\esix{1}{0}{1}{1}{1}{0}},\\ 
\rule{0pt}{15pt} & 2h_{\esix{1}{1}{2}{2}{1}{1}}+2h_{\esix{1}{1}{1}{2}{2}{1}}-2h_{\esix{1}{1}{1}{1}{0}{0}}-2h_{\esix{1}{0}{1}{1}{1}{0}},\\
& 2y_{\esix{1}{1}{2}{2}{1}{1}}+2y_{\esix{1}{1}{1}{2}{2}{1}}+2x_{\esix{1}{1}{1}{1}{0}{0}}+2x_{\esix{1}{0}{1}{1}{1}{0}} & \bigg\}\end{array}\]
We can take $e=\varphi_5\wedge\varphi_4\wedge\varphi_3+\varphi_4\wedge\varphi_3 +\varphi_5\wedge\varphi_2-\varphi_2$. We have $\mathfrak l=\mathfrak k_h\cong\mathfrak{gl}(1)^{\oplus2}\oplus\mathfrak{so}(8)$ 
and 
$\mathfrak k_e=\mathfrak l_e+\mathfrak n$, 
where
$\mathfrak l_e\cong\mathfrak g_2$.

\subsection{$\mathsf E_6/\mathsf F_4$}

We have $\mathfrak k=\mathfrak f_4$, $\mathfrak p=V(\omega_4)$.
Take $\theta$ such that $\alpha_2^\vee,\alpha_4^\vee,\alpha_3^\vee+\alpha_5^\vee,\alpha_1^\vee+\alpha_6^\vee$ form a basis of $\mathfrak t^\theta$ and take the following root vectors for the simple roots of $\mathfrak k$:
\[y_{\esix{0}{0}{0}{1}{0}{0}},\ 
y_{\esix{1}{1}{2}{2}{2}{1}},\
x_{\esix{1}{1}{2}{2}{1}{1}}-x_{\esix{1}{1}{1}{2}{2}{1}},\
y_{\esix{1}{0}{0}{0}{0}{0}}+y_{\esix{0}{0}{0}{0}{0}{1}}.\]
Then $x_{\esix{0}{1}{1}{1}{0}{0}}+x_{\esix{0}{1}{0}{1}{1}{0}}$ is a highest weight vector in $\mathfrak p$.

\subsubsection*{4.1. $(0001)$}
 
\[\left\{x_{\esix{0}{1}{1}{1}{0}{0}}+x_{\esix{0}{1}{0}{1}{1}{0}},\ h_{\esix{0}{1}{1}{1}{0}{0}}+h_{\esix{0}{1}{0}{1}{1}{0}},\ y_{\esix{0}{1}{1}{1}{0}{0}}+y_{\esix{0}{1}{0}{1}{1}{0}}\right\}\]
We have $\mathfrak l=\mathfrak k_h\cong\mathfrak{gl}(1)\oplus\mathfrak{so}(7)$ 
and 
$\mathfrak k_e=\mathfrak l_e+\mathfrak n$, 
where
$\mathfrak l_e\cong\mathfrak{so}(7)$.

\subsection{$\mathsf E_7/\mathsf A_7$}

We have $\mathfrak k=\mathfrak{sl}(8)$, $\mathfrak p=V(\omega_4)$.
Take $\theta$ such that $\mathfrak t^\theta=\mathfrak t$ and take the following root vectors for the simple roots of $\mathfrak k$:
\[x_{\eseven{1}{0}{0}{0}{0}{0}{0}},\ x_{\eseven{0}{0}{1}{0}{0}{0}{0}},\ 
x_{\eseven{0}{0}{0}{1}{0}{0}{0}},\ x_{\eseven{0}{0}{0}{0}{1}{0}{0}},\
x_{\eseven{0}{0}{0}{0}{0}{1}{0}},\ x_{\eseven{0}{0}{0}{0}{0}{0}{1}},\
x_{\eseven{1}{2}{2}{3}{2}{1}{0}}.\]
Then $x_{\eseven{1}{1}{2}{3}{3}{2}{1}}$ is a highest weight vector in $\mathfrak p$.

Fix a basis $e_1,\ldots,e_8$ of $\mathbb C^8$, and $\mathfrak k=\mathfrak{sl}(\mathbb C^8)$. We can identify $x_{\eseven{1}{1}{2}{3}{3}{2}{1}}=e_1\wedge e_2\wedge e_3\wedge e_4$ in $\mathfrak p=\mathsf\Lambda^4\mathbb C^8$.

\subsubsection*{5.1. $(0001000)$}
 
\[\left\{x_{\eseven{1}{1}{2}{3}{3}{2}{1}},\ h_{\eseven{1}{1}{2}{3}{3}{2}{1}},\ y_{\eseven{1}{1}{2}{3}{3}{2}{1}}\right\}\]
Can take $e=e_1\wedge e_2\wedge e_3\wedge e_4$, we have $\mathfrak l=\mathfrak k_h\cong\mathfrak{s}(\mathfrak{gl}(4)\oplus\mathfrak{gl}(4))$ 
and 
$\mathfrak k_e=\mathfrak l_e+\mathfrak n$ 
where
$\mathfrak l_e\cong\mathfrak{sl}(4)\oplus\mathfrak{sl}(4)$.

\subsubsection*{5.2. $(0100010)$}
 
\[\left\{x_{\eseven{1}{1}{2}{3}{2}{1}{1}}+x_{\eseven{1}{1}{2}{2}{2}{2}{1}},\ h_{\eseven{1}{1}{2}{3}{2}{1}{1}}+h_{\eseven{1}{1}{2}{2}{2}{2}{1}},\ y_{\eseven{1}{1}{2}{3}{2}{1}{1}}+y_{\eseven{1}{1}{2}{2}{2}{2}{1}}\right\}\]
We can take $e=e_1\wedge e_2\wedge e_3\wedge e_6 + e_1\wedge e_2\wedge e_4\wedge e_5$. We have $\mathfrak l=\mathfrak k_h\cong\mathfrak{s}(\mathfrak{gl}(2)\oplus\mathfrak{gl}(4)\oplus\mathfrak{gl}(2))$ 
and 
$\mathfrak k_e=\mathfrak l_e+\mathfrak n$, 
where
$\mathfrak l_e\cong\mathfrak{gl}(1)\oplus\mathfrak{sl}(2)\oplus\mathfrak{sp}(4)\oplus\mathfrak{sl}(2)$.

\subsubsection*{5.3. $(0200000)$}
 
\[\begin{array}{rcl}
\bigg\{ & x_{\eseven{1}{1}{2}{2}{2}{1}{0}}+x_{\eseven{1}{1}{2}{2}{1}{1}{1}}+y_{\eseven{0}{1}{0}{0}{0}{0}{0}},\qquad h_{\eseven{1}{1}{2}{2}{2}{1}{0}}+h_{\eseven{1}{1}{2}{2}{1}{1}{1}}-h_{\eseven{0}{1}{0}{0}{0}{0}{0}},\\  & y_{\eseven{1}{1}{2}{2}{2}{1}{0}}+y_{\eseven{1}{1}{2}{2}{1}{1}{1}}+x_{\eseven{0}{1}{0}{0}{0}{0}{0}} & \bigg\}
\end{array}\]
We can take $e=e_1\wedge e_2\wedge e_3\wedge e_8 + e_1\wedge e_2\wedge e_4\wedge e_7 + e_1\wedge e_2\wedge e_5\wedge e_6$. We have $\mathfrak l=\mathfrak k_h\cong\mathfrak{s}(\mathfrak{gl}(2)\oplus\mathfrak{gl}(6))$ 
and 
$\mathfrak k_e=\mathfrak l_e+\mathfrak n$, 
where
$\mathfrak l_e\cong\mathfrak{sl}(2)\oplus\mathfrak{sp}(6)$.

\subsubsection*{5.4. $(0000020)$}
 
\[\begin{array}{rcl}
\bigg\{ & x_{\eseven{1}{1}{2}{2}{1}{1}{1}}+x_{\eseven{1}{1}{1}{2}{2}{1}{1}}+x_{\eseven{0}{1}{1}{2}{2}{2}{1}},\qquad h_{\eseven{1}{1}{2}{2}{1}{1}{1}}+h_{\eseven{1}{1}{1}{2}{2}{1}{1}}+h_{\eseven{0}{1}{1}{2}{2}{2}{1}},\\  & y_{\eseven{1}{1}{2}{2}{1}{1}{1}}+y_{\eseven{1}{1}{1}{2}{2}{1}{1}}+y_{\eseven{0}{1}{1}{2}{2}{2}{1}} & \bigg\}
\end{array}\]
We can take $e=e_1\wedge e_2\wedge e_5\wedge e_6 + e_1\wedge e_3\wedge e_4\wedge e_6 + e_2\wedge e_3\wedge e_4\wedge e_5$. This case is equal to the previous one up to an external automorphism of $\mathfrak k$.

\subsubsection*{5.5. $(1001001)$}
 
\[\begin{array}{rcl}
\bigg\{ & x_{\eseven{1}{1}{2}{3}{2}{1}{0}}+x_{\eseven{1}{1}{2}{2}{2}{1}{1}}+x_{\eseven{1}{1}{1}{2}{2}{2}{1}},\qquad h_{\eseven{1}{1}{2}{3}{2}{1}{0}}+h_{\eseven{1}{1}{2}{2}{2}{1}{1}}+h_{\eseven{1}{1}{1}{2}{2}{2}{1}},\\  & y_{\eseven{1}{1}{2}{3}{2}{1}{0}}+y_{\eseven{1}{1}{2}{2}{2}{1}{1}}+y_{\eseven{1}{1}{1}{2}{2}{2}{1}} & \bigg\}
\end{array}\]
We can take $e=e_1\wedge e_2\wedge e_3\wedge e_7 - e_1\wedge e_2\wedge e_4\wedge e_6 + e_1\wedge e_3\wedge e_4\wedge e_5$. We have $\mathfrak l=\mathfrak k_h\cong\mathfrak s(\mathfrak{gl}(1)\oplus\mathfrak{gl}(3)\oplus\mathfrak{gl}(3)\oplus\mathfrak{gl}(1))$ 
and 
$\mathfrak k_e=\mathfrak l_e+\mathfrak m$, 
where
$\mathfrak l_e\cong\mathfrak{gl}(1)^{\oplus2}\oplus\mathfrak{sl}(3)$, the $\mathfrak{sl}(3)$ summand embedded diagonally in $\mathfrak{sl}(3)^{\oplus2}$, and $\mathfrak m$ is the $\mathfrak l_e$-complement in $\mathfrak n$ of the unique $1$-dimensional $\mathfrak l_e$-submodule of $\mathfrak k(1)$.

\subsubsection*{5.8. $(1100100)$}
 
\[\begin{array}{rcl}
\bigg\{ & x_{\eseven{1}{1}{2}{2}{2}{1}{0}}+x_{\eseven{1}{1}{2}{2}{1}{1}{1}}+x_{\eseven{1}{1}{1}{2}{2}{2}{1}}+y_{\eseven{0}{1}{0}{0}{0}{0}{0}},\\ 
\rule{0pt}{15pt} & h_{\eseven{1}{1}{2}{2}{2}{1}{0}}+h_{\eseven{1}{1}{2}{2}{1}{1}{1}}+h_{\eseven{1}{1}{1}{2}{2}{2}{1}}-h_{\eseven{0}{1}{0}{0}{0}{0}{0}},\\
& y_{\eseven{1}{1}{2}{2}{2}{1}{0}}+y_{\eseven{1}{1}{2}{2}{1}{1}{1}}+y_{\eseven{1}{1}{1}{2}{2}{2}{1}}+x_{\eseven{0}{1}{0}{0}{0}{0}{0}} & \bigg\}
\end{array}\]
We can take $e=e_1\wedge e_2\wedge e_3\wedge e_8 + e_1\wedge e_2\wedge e_4\wedge e_7 + e_1\wedge e_2\wedge e_5\wedge e_6 + e_1\wedge e_3\wedge e_4\wedge e_5$.
We have $\mathfrak l=\mathfrak k_h\cong\mathfrak{s}(\mathfrak{gl}(1)\oplus\mathfrak{gl}(1)\oplus\mathfrak{gl}(3)\oplus\mathfrak{gl}(3))$ 
and 
$\mathfrak k_e=\mathfrak l_e+\mathfrak m$, 
where
$\mathfrak l_e\cong\mathfrak{gl}(1)\oplus\mathfrak{sl}(3)$, the $\mathfrak{sl}(3)$ summand embedded skew-diagonally in $\mathfrak{sl}(3)^{\oplus2}$ via $A\mapsto(A,-A^{\mathrm s})$. The $\mathfrak l_e$-submodule $\mathfrak m\subset\mathfrak n$ can be described as follows. In $\mathfrak k(1)$ there are precisely two $\mathfrak l_e$-submodules of dimension 3 lying into different isotypic $\mathfrak l$-components, call them $\mathfrak u_1$ and $\mathfrak u_2$, and they are simple and isomorphic as $\mathfrak l_e$-modules: then $\mathfrak m$ is the sum of the $\mathfrak l_e$-complement of $\mathfrak u_1\oplus\mathfrak u_2$ in $\mathfrak n$, plus a simple  $\mathfrak l_e$-submodule $\mathfrak u\subset\mathfrak u_1\oplus\mathfrak u_2$ that projects nontrivially on both summands.

\subsubsection*{5.9. $(0010011)$}
 
\[\begin{array}{rll}
\bigg\{ & x_{\eseven{1}{1}{2}{3}{2}{1}{0}}+x_{\eseven{1}{1}{2}{2}{1}{1}{1}}+x_{\eseven{1}{1}{1}{2}{2}{1}{1}}+x_{\eseven{0}{1}{1}{2}{2}{2}{1}},\\ 
\rule{0pt}{15pt} & h_{\eseven{1}{1}{2}{3}{2}{1}{0}}+h_{\eseven{1}{1}{2}{2}{1}{1}{1}}+h_{\eseven{1}{1}{1}{2}{2}{1}{1}}+h_{\eseven{0}{1}{1}{2}{2}{2}{1}},\\
& y_{\eseven{1}{1}{2}{3}{2}{1}{0}}+y_{\eseven{1}{1}{2}{2}{1}{1}{1}}+y_{\eseven{1}{1}{1}{2}{2}{1}{1}}+y_{\eseven{0}{1}{1}{2}{2}{2}{1}} & \bigg\}
\end{array}\]
We can take $e=e_1\wedge e_2\wedge e_3\wedge e_7 + e_1\wedge e_2\wedge e_5\wedge e_6 + e_1\wedge e_3\wedge e_4\wedge e_6 + e_2\wedge e_3\wedge e_4\wedge e_5$. This case is equal to the previous one up to an external automorphism of $\mathfrak k$.

\subsection{$\mathsf E_7/\mathsf D_6 \times \mathsf A_1$}

We have $\mathfrak k=\mathfrak{so}(12)+\mathfrak{sl}(2)$, $\mathfrak p=V(\omega_5+\omega')$.
Take $\theta$ such that $\mathfrak t^\theta=\mathfrak t$ and take the following root vectors for the simple roots of $\mathfrak k$:
\[x_{\eseven{0}{0}{0}{0}{0}{0}{1}},\ x_{\eseven{0}{0}{0}{0}{0}{1}{0}},\ 
x_{\eseven{0}{0}{0}{0}{1}{0}{0}},\ x_{\eseven{0}{0}{0}{1}{0}{0}{0}},\
x_{\eseven{0}{0}{1}{0}{0}{0}{0}},\ x_{\eseven{0}{1}{0}{0}{0}{0}{0}},\ 
x_{\eseven{2}{2}{3}{4}{3}{2}{1}}.\]
Then $x_{\eseven{1}{2}{3}{4}{3}{2}{1}}$ is a highest weight vector in $\mathfrak p$.

Fix a basis $e_1,\ldots,e_6$ of $U$ and a dual basis $\varphi_1,\ldots,\varphi_6$ of $U^*$, so that we have a nondegenerate symmetric bilinear form on $V=U\oplus U^*$. Fix also a basis $e'_1,e'_2$ of $W$. We have $\mathfrak k=\mathfrak{so}(V)\oplus\mathfrak{sl}(W)$. The spin representations can be realized in $\mathsf\Lambda U^*$ (as the odd and even degree parts) via the anti-symmetric square 
\[\sigma^2\colon \mathsf\Lambda^2V\otimes\mathsf\Lambda U^*\to\mathsf\Lambda U^*\] 
of the contraction-extension map
\[\sigma\colon V\otimes\mathsf\Lambda U^*\to\mathsf\Lambda U^*\]
and the natural isomorphism between $\mathfrak{so}(V)$ and $\mathsf\Lambda^2V$. We can identify $x_{\eseven{1}{2}{3}{4}{3}{2}{1}}=\varphi_6\otimes e'_1$ in $\mathfrak p=\left(\mathsf\Lambda^{\mathrm{odd}} U^*\right)\otimes W$. 

\subsubsection*{6.1. $(000010;\ 1)$}
 
\[\left\{x_{\eseven{1}{2}{3}{4}{3}{2}{1}},\ h_{\eseven{1}{2}{3}{4}{3}{2}{1}},\ y_{\eseven{1}{2}{3}{4}{3}{2}{1}}\right\}\]
We can take $e=\varphi_6\otimes e'_1$. We have $\mathfrak l=\mathfrak k_h\cong\mathfrak{gl}(6)\oplus\mathfrak{gl}(1)$ 
and 
$\mathfrak k_e=\mathfrak l_e+\mathfrak n$, 
where
$\mathfrak l_e\cong\mathfrak{gl}(1)\oplus\mathfrak{sl}(6)$.

\subsubsection*{6.2. $(010000;\ 2)$}
 
\[\left\{x_{\eseven{1}{2}{2}{3}{2}{2}{1}}+x_{\eseven{1}{1}{2}{3}{3}{2}{1}},\ h_{\eseven{1}{2}{2}{3}{2}{2}{1}}+h_{\eseven{1}{1}{2}{3}{3}{2}{1}},\ y_{\eseven{1}{2}{2}{3}{2}{2}{1}}+y_{\eseven{1}{1}{2}{3}{3}{2}{1}}\right\}\]
We can take $e=(\varphi_6\wedge\varphi_5\wedge\varphi_4 -\varphi_3)\otimes e'_1$. We have $\mathfrak l=\mathfrak k_h\cong\mathfrak{gl}(2)\oplus\mathfrak{so}(8)\oplus\mathfrak{gl}(1)$ 
and 
$\mathfrak k_e=\mathfrak l_e+\mathfrak n$, 
where
$\mathfrak l_e\cong\mathfrak{gl}(1)\oplus\mathfrak{sl}(2)\oplus\mathfrak{so}(7)$.

\subsubsection*{6.3. $(000100;\ 0)$}
 
\[\left\{x_{\eseven{1}{2}{2}{4}{3}{2}{1}}+y_{\eseven{1}{0}{0}{0}{0}{0}{0}},\ h_{\eseven{1}{2}{2}{4}{3}{2}{1}}-h_{\eseven{1}{0}{0}{0}{0}{0}{0}},\ y_{\eseven{1}{2}{2}{4}{3}{2}{1}}+x_{\eseven{1}{0}{0}{0}{0}{0}{0}}\right\}\]
We can take $e=\varphi_5\otimes e'_1 +\varphi_6\otimes e'_2$. We have $\mathfrak l=\mathfrak k_h\cong\mathfrak{gl}(4)\oplus\mathfrak{so}(4)\oplus\mathfrak{sl}(2)$ 
and 
$\mathfrak k_e=\mathfrak l_e+\mathfrak n$, 
where
$\mathfrak l_e\cong\mathfrak{sl}(4)\oplus\mathfrak{sl}(2)\oplus\mathfrak{sl}(2)$, $\mathfrak{sl}^{\oplus2}$ embedded diagonally in $\mathfrak{sl}^{\oplus3}\cong\mathfrak{so}(4)\oplus\mathfrak{sl}(2)$ via $(A,B)\mapsto(A,B,A)$.

\subsubsection*{6.4. $(000010;\ 3)$}

\[\begin{array}{rcl}
\bigg\{ & x_{\eseven{1}{2}{2}{3}{2}{1}{0}}+x_{\eseven{1}{1}{2}{3}{2}{1}{1}}+x_{\eseven{1}{1}{2}{2}{2}{2}{1}},\qquad h_{\eseven{1}{2}{2}{3}{2}{1}{0}}+h_{\eseven{1}{1}{2}{3}{2}{1}{1}}+h_{\eseven{1}{1}{2}{2}{2}{2}{1}},\\ & y_{\eseven{1}{2}{2}{3}{2}{1}{0}}+y_{\eseven{1}{1}{2}{3}{2}{1}{1}}+y_{\eseven{1}{1}{2}{2}{2}{2}{1}} & \bigg\}
\end{array}\]
We can take $e=(\varphi_6\wedge\varphi_4\wedge\varphi_3 + \varphi_6\wedge\varphi_5\wedge\varphi_2 - \varphi_1)\otimes e'_1$. We have $\mathfrak l=\mathfrak k_h\cong\mathfrak{gl}(6)\oplus\mathfrak{gl}(1)$ 
and 
$\mathfrak k_e=\mathfrak l_e+\mathfrak m$, 
where
$\mathfrak l_e\cong\mathfrak{gl}(1)\oplus\mathfrak{sp}(6)$ and $\mathfrak m$ is the $\mathfrak l_e$-complement in $\mathfrak n$ of the unique $1$-dimensional $\mathfrak l_e$-submodule of $\mathfrak k(1)\cap\mathfrak{so}(V)$.

\subsubsection*{6.5. $(010010;\ 1)$}
 
\[\begin{array}{rcl}
\bigg\{ & x_{\eseven{1}{2}{2}{3}{2}{2}{1}}+x_{\eseven{1}{1}{2}{3}{3}{2}{1}}+y_{\eseven{1}{0}{0}{0}{0}{0}{0}},\qquad h_{\eseven{1}{2}{2}{3}{2}{2}{1}}+h_{\eseven{1}{1}{2}{3}{3}{2}{1}}-h_{\eseven{1}{0}{0}{0}{0}{0}{0}},\\ & y_{\eseven{1}{2}{2}{3}{2}{2}{1}}+y_{\eseven{1}{1}{2}{3}{3}{2}{1}}+x_{\eseven{1}{0}{0}{0}{0}{0}{0}} & \bigg\}
\end{array}\]
We can take $e=(\varphi_6\wedge\varphi_5\wedge\varphi_4 -\varphi_3)\otimes e'_1+\varphi_6\otimes e'_2$.
We have $\mathfrak l=\mathfrak k_h\cong\mathfrak{gl}(2)\oplus\mathfrak{gl}(4)\oplus\mathfrak{gl}(1)$ 
and 
$\mathfrak k_e=\mathfrak l_e+\mathfrak m$, 
where
$\mathfrak l_e\cong\mathfrak{gl}(1)\oplus\mathfrak{sl}(2)\oplus\mathfrak{sp}(4)$. The $\mathfrak l_e$ submodule $\mathfrak m\subset\mathfrak n$ can be described as follows. In $\mathfrak k(1)$ there are precisely two $1$-dimensional $\mathfrak l_e$-submodules lying into different isotypic $\mathfrak l$-components, call them $\mathfrak u_1$ and $\mathfrak u_2$: then $\mathfrak m$ is the sum of the $\mathfrak l_e$-complement of $\mathfrak u_1\oplus\mathfrak u_2$ in $\mathfrak n$, plus a one dimensional submodule $\mathfrak u\subset\mathfrak u_1\oplus\mathfrak u_2$ that projects nontrivially on both summands (which are isomorphic as $\mathfrak l_e$-modules).

\subsection{$\mathsf E_7/\mathsf E_6\times \mathbb C^\times$}

We have $\mathfrak k=\mathfrak e_6\oplus\mathfrak{gl}(1)$, $\mathfrak p=V(\omega_1)\oplus V(\omega_6)$ as $\mathfrak k^{\mathrm{ss}}$-module.
Take $\theta$ such that $\mathfrak t^\theta=\mathfrak t$ and take the following root vectors for the simple roots of $\mathfrak k$:
\[x_{\eseven{1}{0}{0}{0}{0}{0}{0}},\ x_{\eseven{0}{1}{0}{0}{0}{0}{0}},\ 
x_{\eseven{0}{0}{1}{0}{0}{0}{0}},\ x_{\eseven{0}{0}{0}{1}{0}{0}{0}},\
x_{\eseven{0}{0}{0}{0}{1}{0}{0}},\ x_{\eseven{0}{0}{0}{0}{0}{1}{0}}.\]
Then $x_{\eseven{2}{2}{3}{4}{3}{2}{1}}$ and $y_{\eseven{0}{0}{0}{0}{0}{0}{1}}$ are highest weight vectors in $\mathfrak p$.

\subsubsection*{7.1. $(100000;\ 0)$}

\[\left\{x_{\eseven{2}{2}{3}{4}{3}{2}{1}},\ h_{\eseven{2}{2}{3}{4}{3}{2}{1}},\ y_{\eseven{2}{2}{3}{4}{3}{2}{1}}\right\}\]
We have $\mathfrak l=\mathfrak k_h\cong\mathfrak{so}(10)\oplus\mathfrak{gl}(1)^{\oplus2}$ 
and 
$\mathfrak k_e=\mathfrak l_e+\mathfrak n$, 
where
$\mathfrak l_e\cong\mathfrak{gl}(1)\oplus\mathfrak{so}(10)$.

\subsubsection*{7.2. $(000001;\ -2)$}

\[\left\{y_{\eseven{0}{0}{0}{0}{0}{0}{1}},\ -h_{\eseven{0}{0}{0}{0}{0}{0}{1}},\ x_{\eseven{0}{0}{0}{0}{0}{0}{1}}\right\}\]
This case is equal to the previous one up to an external automorphism of $\mathfrak k$.

\subsubsection*{7.3. $(000001;\ 0)$}

\[\left\{x_{\eseven{1}{2}{2}{3}{2}{2}{1}}+x_{\eseven{1}{1}{2}{3}{3}{2}{1}},\ h_{\eseven{1}{2}{2}{3}{2}{2}{1}}+h_{\eseven{1}{1}{2}{3}{3}{2}{1}},\ y_{\eseven{1}{2}{2}{3}{2}{2}{1}}+y_{\eseven{1}{1}{2}{3}{3}{2}{1}}\right\}\]
We have $\mathfrak l=\mathfrak k_h\cong\mathfrak{so}(10)\oplus\mathfrak{gl}(1)^{\oplus2}$ 
and 
$\mathfrak k_e=\mathfrak l_e+\mathfrak n$, 
where
$\mathfrak l_e\cong\mathfrak{gl}(1)\oplus\mathfrak{so}(9)$.

\subsubsection*{7.4. $(100000;\ -2)$}

\[\left\{y_{\eseven{0}{1}{0}{1}{1}{1}{1}}+y_{\eseven{0}{0}{1}{1}{1}{1}{1}},\ -h_{\eseven{0}{1}{0}{1}{1}{1}{1}}-h_{\eseven{0}{0}{1}{1}{1}{1}{1}},\ x_{\eseven{0}{1}{0}{1}{1}{1}{1}}+x_{\eseven{0}{0}{1}{1}{1}{1}{1}}\right\}\]
This case is equal to the previous one up to an external automorphism of $\mathfrak k$.

\subsubsection*{7.5. $(100001;\ -2)$}

\[\left\{x_{\eseven{2}{2}{3}{4}{3}{2}{1}}+y_{\eseven{0}{0}{0}{0}{0}{0}{1}},\ h_{\eseven{2}{2}{3}{4}{3}{2}{1}}-h_{\eseven{0}{0}{0}{0}{0}{0}{1}},\ y_{\eseven{2}{2}{3}{4}{3}{2}{1}}+x_{\eseven{0}{0}{0}{0}{0}{0}{1}}\right\}\]
We have $\mathfrak l=\mathfrak k_h\cong\mathfrak{so}(8)\oplus\mathfrak{gl}(1)^{\oplus3}$ 
and 
$\mathfrak k_e=\mathfrak l_e+\mathfrak n$, 
where
$\mathfrak l_e\cong\mathfrak{gl}(1)\oplus\mathfrak{so}(8)$.

\subsubsection*{7.6. $(000000;\ 2)$}

\[\begin{array}{rcl}
\bigg\{ & x_{\eseven{1}{1}{2}{2}{1}{1}{1}}+x_{\eseven{1}{1}{1}{2}{2}{1}{1}}+x_{\eseven{0}{1}{1}{2}{2}{2}{1}},\qquad h_{\eseven{1}{1}{2}{2}{1}{1}{1}}+h_{\eseven{1}{1}{1}{2}{2}{1}{1}}+h_{\eseven{0}{1}{1}{2}{2}{2}{1}},\\ & y_{\eseven{1}{1}{2}{2}{1}{1}{1}}+y_{\eseven{1}{1}{1}{2}{2}{1}{1}}+y_{\eseven{0}{1}{1}{2}{2}{2}{1}} & \bigg\}
\end{array}\]
We have $\mathfrak l=\mathfrak k_h=\mathfrak k$ 
and 
$\mathfrak k_e\cong\mathfrak f_4$.

\subsubsection*{7.7. $(000000;\ -2)$}

\[\begin{array}{rcl}
\bigg\{ & y_{\eseven{1}{1}{2}{2}{1}{1}{1}}+y_{\eseven{1}{1}{1}{2}{2}{1}{1}}+y_{\eseven{0}{1}{1}{2}{2}{2}{1}},\qquad -h_{\eseven{1}{1}{2}{2}{1}{1}{1}}-h_{\eseven{1}{1}{1}{2}{2}{1}{1}}-h_{\eseven{0}{1}{1}{2}{2}{2}{1}},\\ & x_{\eseven{1}{1}{2}{2}{1}{1}{1}}+x_{\eseven{1}{1}{1}{2}{2}{1}{1}}+x_{\eseven{0}{1}{1}{2}{2}{2}{1}} & \bigg\}
\end{array}\]
This case is equal to the previous one: the orbit can be obtained from the previous one by applying an external automorphism of $\mathfrak k$, but the centralizer is the same.

\subsubsection*{7.8. $(000002;\ -2)$}

\[\begin{array}{rcl}
\bigg\{ & x_{\eseven{1}{2}{2}{3}{2}{2}{1}}+x_{\eseven{1}{1}{2}{3}{3}{2}{1}}+y_{\eseven{0}{0}{0}{0}{0}{0}{1}},\qquad h_{\eseven{1}{2}{2}{3}{2}{2}{1}}+h_{\eseven{1}{1}{2}{3}{3}{2}{1}}-h_{\eseven{0}{0}{0}{0}{0}{0}{1}},\\ & y_{\eseven{1}{2}{2}{3}{2}{2}{1}}+y_{\eseven{1}{1}{2}{3}{3}{2}{1}}+x_{\eseven{0}{0}{0}{0}{0}{0}{1}} & \bigg\}
\end{array}\]
We have $\mathfrak l=\mathfrak k_h\cong\mathfrak{so}(10)\oplus\mathfrak{gl}(1)^{\oplus2}$ 
and 
$\mathfrak k_e=\mathfrak l_e+\mathfrak n$, 
where
$\mathfrak l_e\cong\mathfrak{so}(9)$.

\subsubsection*{7.9. $(200000;\ -2)$}

\[\begin{array}{rcl}
\bigg\{ & x_{\eseven{2}{2}{3}{4}{3}{2}{1}}+y_{\eseven{0}{1}{0}{1}{1}{1}{1}}+y_{\eseven{0}{0}{1}{1}{1}{1}{1}},\qquad h_{\eseven{2}{2}{3}{4}{3}{2}{1}}-h_{\eseven{0}{1}{0}{1}{1}{1}{1}}-h_{\eseven{0}{0}{1}{1}{1}{1}{1}},\\ 
 & y_{\eseven{2}{2}{3}{4}{3}{2}{1}}+x_{\eseven{0}{1}{0}{1}{1}{1}{1}}+x_{\eseven{0}{0}{1}{1}{1}{1}{1}} & \bigg\}
\end{array}\]
This case is equal to the previous one up to an external automorphism of $\mathfrak k$.

\subsubsection*{7.10. $(020000;\ -2)$}

\[\left\{x_{\eseven{2}{2}{3}{4}{3}{2}{1}}+y_{\eseven{1}{0}{1}{1}{1}{1}{1}},\ 2h_{\eseven{2}{2}{3}{4}{3}{2}{1}}-2h_{\eseven{1}{0}{1}{1}{1}{1}{1}},\ 2y_{\eseven{2}{2}{3}{4}{3}{2}{1}}+2x_{\eseven{1}{0}{1}{1}{1}{1}{1}}\right\}\]
We have $\mathfrak l=\mathfrak k_h\cong\mathfrak{sl}(6)\oplus\mathfrak{gl}(1)^{\oplus2}$ 
and 
$\mathfrak k_e=\mathfrak l_e+\mathfrak n$, 
where
$\mathfrak l_e\cong\mathfrak{gl}(1)\oplus\mathfrak{sl}(5)$.

\subsubsection*{7.11. $(010010;\ -2)$}

\[\begin{array}{rcl}
\bigg\{ & x_{\eseven{1}{2}{2}{3}{2}{2}{1}}+x_{\eseven{1}{1}{2}{3}{3}{2}{1}}+y_{\eseven{0}{0}{0}{0}{0}{1}{1}},\qquad 2h_{\eseven{1}{2}{2}{3}{2}{2}{1}}+h_{\eseven{1}{1}{2}{3}{3}{2}{1}}-2h_{\eseven{0}{0}{0}{0}{0}{1}{1}},\\ & 2y_{\eseven{1}{2}{2}{3}{2}{2}{1}}+y_{\eseven{1}{1}{2}{3}{3}{2}{1}}+2x_{\eseven{0}{0}{0}{0}{0}{1}{1}} & \bigg\}
\end{array}\]
We have $\mathfrak l=\mathfrak k_h\cong\mathfrak{sl}(4)\oplus\mathfrak{sl}(2)\oplus\mathfrak{gl}(1)^{\oplus3}$ 
and 
$\mathfrak k_e=\mathfrak l_e+\mathfrak m$, 
where
$\mathfrak l_e\cong\mathfrak{gl}(1)\oplus\mathfrak{sl}(4)$. The $\mathfrak l_e$-submodule $\mathfrak m\subset\mathfrak n$ can be described as follows.

Let $\mathfrak u_1$ and $\mathfrak u_2$ be the $\mathfrak{sl}(4)$-submodules of $\mathfrak k(1)$ with lowest weight vectors $x_{\alpha_2}$ and $x_{\alpha_5}$, respectively. They are isomorphic as $\mathfrak l_e$-modules, but they lie into distinct isotypic $\mathfrak l$-components. Then $\mathfrak m$ is the sum of the $\mathfrak l_e$-complement of $\mathfrak u_1\oplus\mathfrak u_2$ in $\mathfrak n$, plus a simple $\mathfrak l_e$-submodule $\mathfrak u\subset\mathfrak u_1\oplus\mathfrak u_2$ that projects nontrivially on both summands.

\subsubsection*{7.12. $(011000;\ -3)$}

\[\begin{array}{rcl}
\bigg\{ & x_{\eseven{1}{2}{3}{4}{3}{2}{1}}+y_{\eseven{0}{1}{0}{1}{1}{1}{1}}+y_{\eseven{0}{0}{1}{1}{1}{1}{1}},\qquad 2h_{\eseven{1}{2}{3}{4}{3}{2}{1}}-h_{\eseven{0}{1}{0}{1}{1}{1}{1}}-2h_{\eseven{0}{0}{1}{1}{1}{1}{1}},\\ 
 & 2y_{\eseven{1}{2}{3}{4}{3}{2}{1}}+x_{\eseven{0}{1}{0}{1}{1}{1}{1}}+2x_{\eseven{0}{0}{1}{1}{1}{1}{1}} & \bigg\}
\end{array}\]
This case is equal to the previous one up to an external automorphism of $\mathfrak k$.

\subsection{$\mathsf E_8/\mathsf D_8$}

We have $\mathfrak k=\mathfrak{so}(16)$, $\mathfrak p=V(\omega_7)$.
Take $\theta$ such that $\mathfrak t^\theta=\mathfrak t$ and take the following root vectors for the simple roots of $\mathfrak k$:
\[x_{\eeight{2}{2}{3}{4}{3}{2}{1}{0}},\ x_{\eeight{0}{0}{0}{0}{0}{0}{0}{1}},\ 
x_{\eeight{0}{0}{0}{0}{0}{0}{1}{0}},\ x_{\eeight{0}{0}{0}{0}{0}{1}{0}{0}},\
x_{\eeight{0}{0}{0}{0}{1}{0}{0}{0}},\ x_{\eeight{0}{0}{0}{1}{0}{0}{0}{0}},\ 
x_{\eeight{0}{1}{0}{0}{0}{0}{0}{0}},\ x_{\eeight{0}{0}{0}{1}{0}{0}{0}{0}}.\]
Then $x_{\eeight{1}{3}{3}{5}{4}{3}{2}{1}}$ is a highest weight vector in $\mathfrak p$.

Fix a basis $e_1,\ldots,e_8$ of $U$ and a dual basis $\varphi_1,\ldots,\varphi_8$ of $U^*$, so that we have a nondegenerate symmetric bilinear form on $V=U\oplus U^*$, and $\mathfrak k=\mathfrak{so}(V)$. The spin representations can be realized in $\mathsf\Lambda U^*$ (as the odd and even degree parts) via the anti-symmetric square 
\[\sigma^2\colon \mathsf\Lambda^2V\otimes\mathsf\Lambda U^*\to\mathsf\Lambda U^*\] 
of the contraction-extension map
\[\sigma\colon V\otimes\mathsf\Lambda U^*\to\mathsf\Lambda U^*\]
and the natural isomorphism between $\mathfrak{so}(V)$ and $\mathsf\Lambda^2V$. We can identify $x_{\eeight{1}{3}{3}{5}{4}{3}{2}{1}}=\varphi_8$ in $\mathfrak p=\mathsf\Lambda^{\mathrm{odd}} U^*$. 

\subsubsection*{8.1. $(00000010)$}
 
\[\left\{x_{\eeight{1}{3}{3}{5}{4}{3}{2}{1}},\ h_{\eeight{1}{3}{3}{5}{4}{3}{2}{1}},\ y_{\eeight{1}{3}{3}{5}{4}{3}{2}{1}}\right\}\]
We can take $e=\varphi_8$. We have $\mathfrak l=\mathfrak k_h\cong\mathfrak{gl}(8)$ 
and 
$\mathfrak k_e=\mathfrak l_e+\mathfrak n$, 
where
$\mathfrak l_e\cong\mathfrak{sl}(8)$.

\subsubsection*{8.2. $(00010000)$}
 
\[\left\{x_{\eeight{1}{2}{3}{4}{3}{3}{2}{1}}+x_{\eeight{1}{2}{2}{4}{4}{3}{2}{1}},\ h_{\eeight{1}{2}{3}{4}{3}{3}{2}{1}}+h_{\eeight{1}{2}{2}{4}{4}{3}{2}{1}},\ y_{\eeight{1}{3}{2}{4}{3}{3}{2}{1}}+y_{\eeight{1}{2}{2}{4}{4}{3}{2}{1}}\right\}\]
We can take $e=\varphi_8\wedge\varphi_7\wedge\varphi_6-\varphi_5$. We have $\mathfrak l=\mathfrak k_h\cong\mathfrak{gl}(4)\oplus\mathfrak{so}(8)$ 
and 
$\mathfrak k_e=\mathfrak l_e+\mathfrak n$, 
where
$\mathfrak l_e\cong\mathfrak{sl}(4)\oplus\mathfrak{so}(7)$.

\subsubsection*{8.3. $(01000010)$}
 
\[\begin{array}{rcl}
\bigg\{ & x_{\eeight{1}{2}{3}{4}{3}{2}{1}{1}}+x_{\eeight{1}{2}{2}{4}{3}{2}{2}{1}}+x_{\eeight{1}{2}{2}{3}{3}{3}{2}{1}},\qquad h_{\eeight{1}{2}{3}{4}{3}{2}{1}{1}}+h_{\eeight{1}{2}{2}{4}{3}{2}{2}{1}}+h_{\eeight{1}{2}{2}{3}{3}{3}{2}{1}},\\ 
 & y_{\eeight{1}{2}{3}{4}{3}{2}{1}{1}}+y_{\eeight{1}{2}{2}{4}{3}{2}{2}{1}}+y_{\eeight{1}{2}{2}{3}{3}{3}{2}{1}} & \bigg\}
\end{array}\]
We can take $e=\varphi_8\wedge\varphi_6\wedge\varphi_5 + \varphi_8\wedge\varphi_7\wedge\varphi_4 - \varphi_3$. We have $\mathfrak l=\mathfrak k_h\cong\mathfrak{gl}(2)\oplus\mathfrak{gl}(6)$ 
and 
$\mathfrak k_e=\mathfrak l_e+\mathfrak m$, 
where
$\mathfrak l_e\cong\mathfrak{gl}(1)\oplus\mathfrak{sl}(2)\oplus\mathfrak{sp}(6)$ and $\mathfrak m$ is the $\mathfrak l_e$-complement in $\mathfrak n$ of the unique $1$-dimensional $\mathfrak l_e$-submodule of $\mathfrak k(1)$.
 
\subsubsection*{8.6. $(10001000)$}
 
\[\begin{array}{rll}
\bigg\{ & x_{\eeight{1}{2}{3}{4}{3}{2}{1}{0}}+x_{\eeight{1}{2}{2}{4}{3}{2}{1}{1}}+x_{\eeight{1}{2}{2}{3}{3}{2}{2}{1}}+x_{\eeight{1}{1}{2}{3}{3}{3}{2}{1}},\\ 
\rule{0pt}{15pt} & h_{\eeight{1}{2}{3}{4}{3}{2}{1}{0}}+h_{\eeight{1}{2}{2}{4}{3}{2}{1}{1}}+h_{\eeight{1}{2}{2}{3}{3}{2}{2}{1}}+h_{\eeight{1}{1}{2}{3}{3}{3}{2}{1}},\\ 
 & y_{\eeight{1}{2}{3}{4}{3}{2}{1}{0}}+y_{\eeight{1}{2}{2}{4}{3}{2}{1}{1}}+y_{\eeight{1}{2}{2}{3}{3}{2}{2}{1}}+y_{\eeight{1}{1}{2}{3}{3}{3}{2}{1}} & \bigg\}
\end{array}\]
We can take $e=\varphi_7\wedge\varphi_6\wedge\varphi_5 + \varphi_8\wedge\varphi_6\wedge\varphi_4 + \varphi_8\wedge\varphi_7\wedge\varphi_3 - \varphi_2$. We have $\mathfrak l=\mathfrak k_h\cong\mathfrak{gl}(1)\oplus\mathfrak{gl}(4)\oplus\mathfrak{so}(6)$ 
and 
$\mathfrak k_e=\mathfrak l_e+\mathfrak m$, 
where
$\mathfrak l_e\cong\mathfrak{gl}(1)\oplus\mathfrak{sl}(4)$. Here $\mathfrak{sl}(4)$ is included in $\mathfrak{sl}(4)\oplus\mathfrak{so}(6)$ and contains $x_{\alpha_8}-x_{\alpha_3}$, $x_{\alpha_7}-x_{\alpha_4}$, $x_{\alpha_6}-x_{\alpha_2}$ as root vectors for its simple roots, whereas
$\mathfrak m$ is the $\mathfrak l_e$-complement in $\mathfrak n$ of the unique $4$-dimensional $\mathfrak l_e$-submodule in the simple $\mathfrak l$-submodule of lowest weight vector $x_{\alpha_5}$.


\subsection{$\mathsf E_8/\mathsf E_7 \times \mathsf A_1$}

We have $\mathfrak k=\mathfrak e_7+\mathfrak{sl}(2)$, $\mathfrak p=V(\omega_7+\omega')$.
Take $\theta$ such that $\mathfrak t^\theta=\mathfrak t$ and take the following root vectors for the simple roots of $\mathfrak k$:
\[x_{\eeight{1}{0}{0}{0}{0}{0}{0}{0}},\ x_{\eeight{0}{1}{0}{0}{0}{0}{0}{0}},\ 
x_{\eeight{0}{0}{1}{0}{0}{0}{0}{0}},\ x_{\eeight{0}{0}{0}{1}{0}{0}{0}{0}},\
x_{\eeight{0}{0}{0}{0}{1}{0}{0}{0}},\ x_{\eeight{0}{0}{0}{0}{0}{1}{0}{0}},\ 
x_{\eeight{0}{0}{0}{0}{0}{0}{1}{0}},\ x_{\eeight{2}{3}{4}{6}{5}{4}{3}{2}}.\]
Then $x_{\eeight{2}{3}{4}{6}{5}{4}{3}{1}}$ is a highest weight vector in $\mathfrak p$.

\subsubsection*{9.1. $(0000001;\ 1)$}
 
\[\left\{x_{\eeight{2}{3}{4}{6}{5}{4}{3}{1}},\ h_{\eeight{2}{3}{4}{6}{5}{4}{3}{1}},\ y_{\eeight{2}{3}{4}{6}{5}{4}{3}{1}}\right\}\]
We have $\mathfrak l=\mathfrak k_h\cong\mathfrak e_6\oplus\mathfrak{gl}(1)^{\oplus2}$ 
and 
$\mathfrak k_e=\mathfrak l_e+\mathfrak n$, 
where
$\mathfrak l_e\cong\mathfrak{gl}(1)\oplus\mathfrak e_6$.

\subsubsection*{9.2. $(1000000;\ 2)$}
 
\[\left\{x_{\eeight{2}{3}{3}{5}{4}{3}{2}{1}}+x_{\eeight{2}{2}{4}{5}{4}{3}{2}{1}},\ h_{\eeight{2}{3}{3}{5}{4}{3}{2}{1}}+h_{\eeight{2}{2}{4}{5}{4}{3}{2}{1}},\ y_{\eeight{2}{3}{3}{5}{4}{3}{2}{1}}+y_{\eeight{2}{2}{4}{5}{4}{3}{2}{1}}\right\}\]
We have $\mathfrak l=\mathfrak k_h\cong\mathfrak{so}(12)\oplus\mathfrak{gl}(1)^{\oplus2}$ 
and 
$\mathfrak k_e=\mathfrak l_e+\mathfrak n$, 
where
$\mathfrak l_e\cong\mathfrak{gl}(1)\oplus\mathfrak{so}(11)$.

\subsubsection*{9.3. $(0000010;\ 0)$}
 
\[\left\{x_{\eeight{2}{3}{4}{6}{5}{4}{2}{1}}+y_{\eeight{0}{0}{0}{0}{0}{0}{0}{1}},\ h_{\eeight{2}{3}{4}{6}{5}{4}{2}{1}}-h_{\eeight{0}{0}{0}{0}{0}{0}{0}{1}},\ y_{\eeight{2}{3}{4}{6}{5}{4}{2}{1}}+x_{\eeight{0}{0}{0}{0}{0}{0}{0}{1}}\right\}\]
We have $\mathfrak l=\mathfrak k_h\cong\mathfrak{so}(10)\oplus\mathfrak{sl}(2)^{\oplus2}\oplus\mathfrak{gl}(1)$ 
and 
$\mathfrak k_e=\mathfrak l_e+\mathfrak n$, 
where
$\mathfrak l_e\cong\mathfrak{so}(10)\oplus\mathfrak{sl}(2)$ and $\mathfrak{sl}(2)$ is embedded diagonally in $\mathfrak{sl}(2)^{\oplus2}$.

\subsubsection*{9.4. $(0000001;\ 3)$}
 
\[\begin{array}{rcl}
\bigg\{ & x_{\eeight{2}{2}{3}{4}{3}{2}{2}{1}}+x_{\eeight{1}{2}{3}{4}{3}{3}{2}{1}}+x_{\eeight{1}{2}{2}{4}{4}{3}{2}{1}},\qquad h_{\eeight{2}{2}{3}{4}{3}{2}{2}{1}}+h_{\eeight{1}{2}{3}{4}{3}{3}{2}{1}}+h_{\eeight{1}{2}{2}{4}{4}{3}{2}{1}},\\ 
 & y_{\eeight{2}{2}{3}{4}{3}{2}{2}{1}}+y_{\eeight{1}{2}{3}{4}{3}{3}{2}{1}}+y_{\eeight{1}{2}{2}{4}{4}{3}{2}{1}} & \bigg\}
\end{array}\]
We have $\mathfrak l=\mathfrak k_h\cong\mathfrak e_6\oplus\mathfrak{gl}(1)^{\oplus2}$ 
and 
$\mathfrak k_e=\mathfrak l_e+\mathfrak m$, 
where
$\mathfrak l_e\cong\mathfrak{gl}(1)\oplus\mathfrak f_4$ and $\mathfrak m$ is the $\mathfrak l_e$-complement of $\mathfrak u$ in $\mathfrak n$: $\mathfrak u$ being the unique $1$-dimensional $\mathfrak f_4$-submodule in the simple $\mathfrak e_6$-submodule of $\mathfrak k(1)$ of lowest weight vector $x_{\alpha_7}$.


\subsubsection*{9.5. $(1000001;\ 1)$}

\[\begin{array}{rcl}
\bigg\{ & x_{\eeight{2}{3}{3}{5}{4}{3}{2}{1}}+x_{\eeight{2}{2}{4}{5}{4}{3}{2}{1}}+y_{\eeight{0}{0}{0}{0}{0}{0}{0}{1}},\qquad h_{\eeight{2}{3}{3}{5}{4}{3}{2}{1}}+h_{\eeight{2}{2}{4}{5}{4}{3}{2}{1}}-h_{\eeight{0}{0}{0}{0}{0}{0}{0}{1}},\\ 
 & y_{\eeight{2}{3}{3}{5}{4}{3}{2}{1}}+y_{\eeight{2}{2}{4}{5}{4}{3}{2}{1}}+x_{\eeight{0}{0}{0}{0}{0}{0}{0}{1}} & \bigg\}
\end{array}\]
We have $\mathfrak l=\mathfrak k_h\cong\mathfrak{so}(10)\oplus\mathfrak{gl}(1)^{\oplus3}$ 
and 
$\mathfrak k_e=\mathfrak l_e+\mathfrak m$, 
where
$\mathfrak l_e\cong\mathfrak{gl}(1)\oplus\mathfrak{so}(9)$. The $\mathfrak l_e$-submodule $\mathfrak m\subset\mathfrak n$ can be described as follows.

Let $\mathfrak u_1$ be the unique $1$-dimensional $\mathfrak l_e$-submodule of the simple $\mathfrak l$-submodule of $\mathfrak k(1)$ of lowest weight vector $x_{\alpha_7}$, and let $\mathfrak u_2$ be the $1$-dimensional $\mathfrak l$-submodule of $\mathfrak k(1)$ spanned by $x_{\eeight{2}{3}{4}{6}{5}{4}{3}{2}}$. Then $\mathfrak m$ is the sum of the $\mathfrak l_e$-complement of $\mathfrak u_1\oplus\mathfrak u_2$ in $\mathfrak n$, plus a one dimensional submodule $\mathfrak u\subset\mathfrak u_1\oplus\mathfrak u_2$ that projects nontrivially on both summands (which are isomorphic as $\mathfrak l_e$-modules).

\subsection{$\mathsf F_4/\mathsf C_3 \times \mathsf A_1$}

We have $\mathfrak k=\mathfrak{sp}(6)+\mathfrak{sl}(2)$, $\mathfrak p=V(\omega_3+\omega')$.
Take $\theta$ such that $\mathfrak t^\theta=\mathfrak t$ and take the following root vectors for the simple roots of $\mathfrak k$:
\[x_{\ffour{0}{0}{0}{1}},\ x_{\ffour{0}{0}{1}{0}},\ 
x_{\ffour{0}{1}{0}{0}},\ x_{\ffour{2}{3}{4}{2}}.\]
Then $x_{\ffour{1}{3}{4}{2}}$ is a highest weight vector in $\mathfrak p$.

Fix a basis $e_1,e_2,e_3,e_{-3},e_{-2},e_{-1}$ of $V$ and a skew-symmetric bilinear form $\omega$ on $V$ such that $\omega(e_i,e_j)=\delta_{i,-j}$. Fix also a basis $e'_1,e'_2$ of $W$. We have $\mathfrak k=\mathfrak{sp}(V,\omega)\oplus\mathfrak{sl}(W)$. We can identify $x_{\ffour{1}{3}{4}{2}}=e_1\wedge e_2\wedge e_3\otimes e'_1$ in $\mathfrak p\subset\left(\mathsf\Lambda^3V\right)\otimes W$. 

\subsubsection*{10.1. $(001;\ 1)$}
 
\[\left\{x_{\ffour{1}{3}{4}{2}},\ h_{\ffour{1}{3}{4}{2}},\ y_{\ffour{1}{3}{4}{2}}\right\}\]
We can take $e=e_1\wedge e_2\wedge e_3\otimes e'_1$. We have $\mathfrak l=\mathfrak k_h\cong\mathfrak{gl}(3)\oplus\mathfrak{gl}(1)$ 
and 
$\mathfrak k_e=\mathfrak l_e+\mathfrak n$, 
where
$\mathfrak l_e\cong\mathfrak{gl}(1)\oplus\mathfrak{sl}(3)$.

\subsubsection*{10.2. $(100;\ 2)$}
 
\[\left\{x_{\ffour{1}{2}{3}{2}},\ h_{\ffour{1}{2}{3}{2}},\ y_{\ffour{1}{2}{3}{2}}\right\}\]
We can take $e=(e_1\wedge e_2\wedge e_{-2}-e_1\wedge e_3\wedge e_{-3})\otimes e'_1$. We have $\mathfrak l=\mathfrak k_h\cong\mathfrak{gl}(1)\oplus\mathfrak{sp}(4)\oplus\mathfrak{gl}(1)$ 
and 
$\mathfrak k_e=\mathfrak l_e+\mathfrak n$, 
where
$\mathfrak l_e\cong\mathfrak{gl}(1)\oplus\mathfrak{sl}(2)\oplus\mathfrak{sl}(2)$.

\subsubsection*{10.3. $(010;\ 0)$}
 
\[\left\{x_{\ffour{1}{2}{4}{2}}+y_{\ffour{1}{0}{0}{0}},\ h_{\ffour{1}{2}{4}{2}}-h_{\ffour{1}{0}{0}{0}},\ y_{\ffour{1}{2}{4}{2}}+x_{\ffour{1}{0}{0}{0}}\right\}\]
We can take $e=e_1\wedge e_2\wedge e_3\otimes e'_2-e_1\wedge e_2\wedge e_{-3}\otimes e'_1$. We have $\mathfrak l=\mathfrak k_h\cong\mathfrak{gl}(2)\oplus\mathfrak{sl}(2)\oplus\mathfrak{sl}(2)$ 
and 
$\mathfrak k_e=\mathfrak l_e+\mathfrak n$, 
where
$\mathfrak l_e\cong\mathfrak{sl}(2)\oplus\mathfrak{sl}(2)$ embedded diagonally in $\mathfrak{sl}(2)^{\oplus3}$ via $(A,B)\mapsto(A,B,B)$.

\subsubsection*{10.4. $(001;\ 3)$}
 
\[\left\{x_{\ffour{1}{2}{3}{1}}+x_{\ffour{1}{2}{2}{2}},\ h_{\ffour{1}{2}{3}{1}}+h_{\ffour{1}{2}{2}{2}},\ y_{\ffour{1}{2}{3}{1}}+y_{\ffour{1}{2}{2}{2}}\right\}\]
We can take $e=(e_1\wedge e_2\wedge e_{-1}-e_1\wedge e_3\wedge e_{-2}+e_2\wedge e_3\wedge e_{-3})\otimes e'_1$. We have $\mathfrak l=\mathfrak k_h\cong\mathfrak{gl}(3)\oplus\mathfrak{gl}(1)$ 
and 
$\mathfrak k_e=\mathfrak l_e+\mathfrak m$, 
where
$\mathfrak l_e\cong\mathfrak{gl}(1)\oplus\mathfrak{so}(3)$ and 
$\mathfrak m$ is the $\mathfrak l_e$-complement of $\mathfrak u$ in $\mathfrak n$: $\mathfrak u$ being the unique $1$-dimensional $\mathfrak l_e$-submodule of $\mathfrak k(1)\cap\mathfrak{sp}(V,\omega)$.

\subsubsection*{10.5. $(101;\ 1)$}
 
\[\left\{x_{\ffour{1}{2}{3}{2}}+y_{\ffour{1}{0}{0}{0}},\ h_{\ffour{1}{2}{3}{2}}-h_{\ffour{1}{0}{0}{0}},\ y_{\ffour{1}{2}{3}{2}}+x_{\ffour{1}{0}{0}{0}}\right\}\]
We can take $e=e_1\wedge e_2\wedge e_3\otimes e'_2+(e_1\wedge e_2\wedge e_{-2}-e_1\wedge e_3\wedge e_{-3})\otimes e'_1$. We have $\mathfrak l=\mathfrak k_h\cong\mathfrak{gl}(1)\oplus\mathfrak{gl}(2)\oplus\mathfrak{gl}(1)$ 
and 
$\mathfrak k_e=\mathfrak l_e+\mathfrak m$, 
where
$\mathfrak l_e\cong\mathfrak{gl}(1)^{\oplus2}$. The $\mathfrak l_e$-submodule $\mathfrak m\subset\mathfrak n$ can be described as follows. 

Let $\mathfrak u_1$ be the $1$-dimensional $\mathfrak l_e$-submodule spanned by $x_{\alpha_2+\alpha_3}$, which is contained in the simple $\mathfrak l$-submodule of $\mathfrak k(1)$ of lowest weight vector $x_{\alpha_2}$. 
Let $\mathfrak u_2$ be the $1$-dimensional $\mathfrak l$-submodule spanned by $x_{\ffour{2}{3}{4}{2}}$, it is equal $\mathfrak k(1)\cap\mathfrak{sl}(W)$. 
Then $\mathfrak m$ is the sum of the $\mathfrak l_e$-complement of $\mathfrak u_1\oplus\mathfrak u_2$ in $\mathfrak n$, plus a one dimensional submodule $\mathfrak u\subset\mathfrak u_1\oplus\mathfrak u_2$ that projects nontrivially on both summands (which are isomorphic as $\mathfrak l_e$-modules).



\subsection{$\mathsf F_4/\mathsf B_4$}

We have $\mathfrak k=\mathfrak{so}(9)$, $\mathfrak p=V(\omega_4)$.
Take $\theta$ such that $\mathfrak t^\theta=\mathfrak t$ and take the following root vectors for the simple roots of $\mathfrak k$:
\[x_{\ffour{0}{1}{2}{2}},\ x_{\ffour{1}{0}{0}{0}},\ 
x_{\ffour{0}{1}{0}{0}},\ x_{\ffour{0}{0}{1}{0}}.\]
Then $x_{\ffour{1}{2}{3}{1}}$ is a highest weight vector in $\mathfrak p$.

Fix a basis $e_1,e_2,e_3,e_4$ of $U$ and a dual basis $\varphi_1,\varphi_2,\varphi_3,\varphi_4$ of $U^*$, so that we have a nondegenerate symmetric bilinear form on $U\oplus U^*$, extend it to $V=U\oplus \mathbb Ce_0 \oplus U^*$ by setting $e_0$ orthogonal to $U\oplus U^*$ and $(e_0,e_0)=1$. We have $\mathfrak k=\mathfrak{so}(V)$. The spin representation can be realized in $\mathsf\Lambda U^*$ via the anti-symmetric square 
\[\sigma^2\colon \mathsf\Lambda^2V\otimes\mathsf\Lambda U^*\to\mathsf\Lambda U^*\] 
of the contraction-extension map
\[\sigma\colon (U\oplus U^*)\otimes\mathsf\Lambda U^*\to\mathsf\Lambda U^*\]
extended with the signed identity on $\mathbb Ce_0$
\[e_0\otimes \varphi\mapsto (-1)^{\deg\varphi}\varphi\]
and the natural isomorphism between $\mathfrak{so}(V)$ and $\mathsf\Lambda^2V$. We can identify $x_{\ffour{1}{2}{3}{1}}=1$ in $\mathfrak p=\mathsf\Lambda U^*$. 

\subsubsection*{11.1. $(0001)$}
 
\[\left\{x_{\ffour{1}{2}{3}{1}},\ h_{\ffour{1}{2}{3}{1}},\ y_{\ffour{1}{2}{3}{1}}\right\}\]
We can take $e=1$. We have $\mathfrak l=\mathfrak k_h\cong\mathfrak{gl}(4)$ 
and 
$\mathfrak k_e=\mathfrak l_e+\mathfrak n$, 
where
$\mathfrak l_e\cong\mathfrak{sl}(4)$.

\subsubsection*{11.2. $(4000)$}
 
\[\left\{x_{\ffour{1}{1}{1}{1}}+x_{\ffour{1}{1}{1}{1}},\ 2h_{\ffour{1}{1}{1}{1}}+2h_{\ffour{1}{1}{1}{1}},\ 2y_{\ffour{1}{1}{1}{1}}+2y_{\ffour{1}{1}{1}{1}}\right\}\]
We can take $e=\varphi_4\wedge\varphi_3+\varphi_2$. We have $\mathfrak l=\mathfrak k_h\cong\mathfrak{gl}(1)\oplus\mathfrak{so}(7)$ 
and 
$\mathfrak k_e=\mathfrak l_e+\mathfrak n$, 
where
$\mathfrak l_e\cong\mathfrak g_2$.

\subsection{$\mathsf G_2/\mathsf A_1 \times \mathsf A_1$}

We have $\mathfrak k=\mathfrak{sl}(2)+\mathfrak{sl}(2)$, $\mathfrak p=V(3\omega+\omega')$.
Take $\theta$ such that $\mathfrak t^\theta=\mathfrak t$ and take $x_{\alpha_1}$ and $x_{3\alpha_1+2\alpha_2}$ as root vectors for the simple roots of $\mathfrak k$.
Then $x_{3\alpha_1+\alpha_2}$ is a highest weight vector in $\mathfrak p$.

Fix a basis $e_1,e_2$ of $V$ and a basis $e'_1,e'_2$ of $W$. We have $\mathfrak k=\mathfrak{sl}(V)\oplus\mathfrak{sl}(W)$. We can identify $x_{3\alpha_1+\alpha_2}=(e_1)^3\otimes e'_1$ in $\mathfrak p=\left(\mathsf S^3 V\right)\otimes W$. 

\subsubsection*{12.1. $(1;\ 1)$}
 
\[\left\{x_{3\alpha_1+\alpha_2},\ h_{3\alpha_1+\alpha_2},\ y_{3\alpha_1+\alpha_2}\right\}\]
We can take $e=(e_1)^3\otimes e'_1$. We have $\mathfrak l=\mathfrak k_h\cong\mathfrak{gl}(1)\oplus\mathfrak{gl}(1)$ 
and 
$\mathfrak k_e=\mathfrak l_e+\mathfrak n$, 
where
$\mathfrak l_e\cong\mathfrak{gl}(1)$.

\subsubsection*{12.2. $(1;\ 3)$}
 
\[\left\{x_{2\alpha_1+\alpha_2},\ h_{2\alpha_1+\alpha_2},\ y_{2\alpha_1+\alpha_2}\right\}\]
We can take $e=(e_1)^2e_2\otimes e'_1$. We have $\mathfrak l=\mathfrak k_h\cong\mathfrak{gl}(1)\oplus\mathfrak{gl}(1)$ 
and 
$\mathfrak k_e=\mathfrak l_e+\mathfrak m$, 
where
$\mathfrak l_e\cong\mathfrak{gl}(1)$ and $\mathfrak m$ is the $\mathfrak l_e$-complement in $\mathfrak n$ of the $1$-dimensional $\mathfrak l$-module $\mathfrak k(1)\cap\mathfrak{sl}(V)$.


\section{Tables of spherical nilpotent $K$-orbits  in $\mathfrak p$ in the exceptional cases}\label{B}

Here we denote by $\alpha_1,\ldots,\alpha_n,\alpha'_1,\ldots,\alpha'_{n'},\ldots$ the simple roots of $K$, enumerated as in Bourbaki, and by $\omega_1,\ldots,\omega_n,\omega'_1,\ldots,\omega'_{n'},\ldots$ the corresponding fundamental weights. Recall that, when the symmetric pair $(G,K)$ is of Hermitian type, we denote by $\chi$ the central character of $K$ on $\mathfrak p_+$ (see Section~\ref{s:3}).

For every spherical nilpotent orbit $Ke$ in $\mathfrak p$, we report the Kostant-Dynkin diagram and the height of $Ge$ (columns~2 and 3), the Kostant-Dynkin diagram and the $\mathfrak p$-height of $Ke$ (columns~4 and 5), the Luna diagram and the set of spherical roots of the spherical system of $K\pi(e)$ (columns~6 and 7), the normality of $\ol{Ke}$ (column~8), the codimension of $\ol{Ke} \smallsetminus Ke$ in $\ol{Ke}$ (column~9) and the generators of the weight semigroup of $\wt{Ke}$ (column~10).

\includepdf[fitpaper,pages=-,landscape]{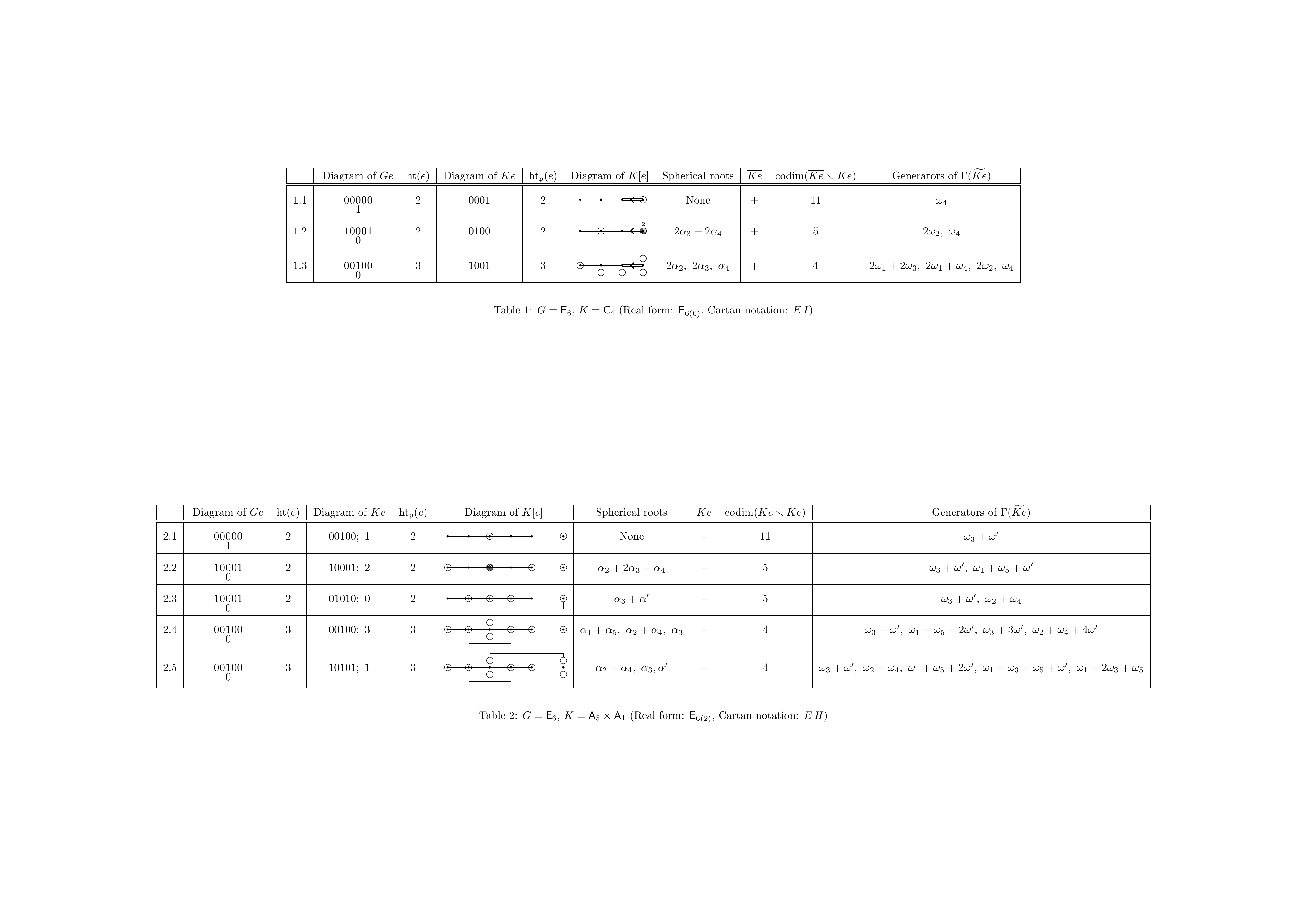}

\end{document}